\DeclareMathAlphabet{\mathpzc}{OT1}{pzc}{m}{it}
\renewcommand{\leq}{\leqslant}
\renewcommand{\geq}{\geqslant}
\newcommand{\la}{\langle}
\newcommand{\ra}{\rangle}
\renewcommand{\L}{\hat{L}}
\newcommand{\Oo}{
\mathcal
O}
\newcommand{\BS}{{
\mathcal B}{\mathcal S}}
\newtheorem{theorem}{Theorem}
\newtheorem{lemma}{Lemma}[section]
\newtheorem{Conj}{Conjecture}
\newtheorem*{BSThm}{Baer-Suzuki Theorem}
\newtheorem{Prop}{Proposition}
\theoremstyle{definition}
\newtheorem{definition}[theorem]{Definition}
\theoremstyle{remark}
\numberwithin{equation}{section}
\DeclareMathOperator{\Inn}{Inn}  \DeclareMathOperator{\Inndiag}{Inndiag}
 \DeclareMathOperator{\GL}{GL}
\DeclareMathOperator{\Aut}{Aut} 
\DeclareMathOperator{\Sym}{Sym}
\DeclareMathOperator{\bs}{BS}
\renewcommand{\qed}{$\blacksquare$}
\begin{document}

\title{Baer--Suzuki theorem for the $\pi$-radical}


\author[Yang, N.]{Nanying Yang}
\address{Jiangnan University\\
no. 1800, lihu avenue, wuxi, jiangsu, 214122}
\curraddr{}
\email{yangny@jiangnan.edu.cn }
\thanks{}

\author[Revin D.O.]{Danila O. Revin}
\address{Sobolev Institute of Mathematics,\\
pr-t Acad. Koptyug, 4,\\
630090, Novosibirsk, Russia}
\curraddr{}
\email{revin@math.nsc.ru}
\thanks{}

\author[Vdovin E.P.]{Evgeny P. Vdovin}
\address{Mathematical Center in Akademgorodok,\\
Sobolev Institute of Mathematics,\\
pr-t Acad. Koptyug, 4,\\
630090, Novosibirsk, Russia}
\curraddr{}
\email{vdovin@math.nsc.ru}
\thanks{}

\subjclass[2010]{20D25, 20D05, 20E45}

\date{}

\dedicatory{}

\commby{}

\begin{abstract}
In the paper we prove (modulo the classification of finite simple groups) an analogue of the famous Baer-Suzuki theorem for the $\pi$-radical of a finite group, where $\pi$ is a set of primes.
\end{abstract}

\maketitle


\section*{Introduction}


Throughout the paper we denote by  $\pi$ a set of primes. A finite group is called a  {\em $\pi$-group},
if all prime divisors of its order belong to~$\pi$. Given a finite group $G$, by  $\Oo_\pi(G)$ we denote its {\em $\pi$-radical}, i.~e. the largest normal $\pi$-subgroup of~$G$,
and by $G^\sharp$ we always denote $G\setminus \{1\}$.

The Baer--Suzuki theorem  \cite{Baer,Suz,AlpLy} states

\begin{BSThm}
Let $p$ be a prime,  $G$ a finite group, and $x\in G$.

Then $x\in \Oo_p(G)$ if and only if  $\la x,x^g \ra$ is a  $p$-group for every~${g\in G}$.
\end{BSThm}

Clearly, in this theorem only the ``if'' part is nontrivial.

Various generalization and analogues for the Ba\-er--Su\-zu\-ki theorem were investigated by many authors in
\cite{AlpLy,Mamont,Soz,OMS,Gu,FGG,GGKP,GGKP1,GGKP2, Palchik, Tyut,Tyut1,BS_odd,BS_Dpi}. For example, N. Gordeev,  F. Grunewald, B. Kunyavskii, and  E. Plotkin in \cite{GGKP2},
and independently P. Flavell, S. Guest, and R. Guralnick  in \cite{FGG} have shown that, if every four elements in given conjugacy class of a finite group generate a solvable subgroup,
then the conjugacy class is included in the solvable radical of the group.

The following proposition shows that one cannot replace in the Baer--Suzuki theorem  $p$ with a set of primes~$\pi$.

\begin{Prop}
\label{ex1}
Let  $m$ be a natural number. Choose a prime  $r$ and a set $\pi$ of primes so that  $r-1>m$ and  $\pi$ includes all primes less than~$r$ and does not include~$r$. Then,
in the symmetric group $G=S_r$, any  $m$ transpositions generate a $\pi$-sub\-gro\-up, while $\Oo_\pi(G)=1$.
\end{Prop}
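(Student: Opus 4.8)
The plan is to pin down exactly the subgroup of $S_r$ generated by a set of transpositions, using the classical correspondence between such subgroups and graphs on the moved points, and then to compare the primes dividing its order with the number $r$.

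I would first dispose of the easy assertion $\Oo_\pi(G)=1$. Here $\Oo_\pi(G)$ is a normal $\pi$-subgroup of $G=S_r$. Since $r$ divides $|S_r|$ but $r\notin\pi$, it cannot equal $S_r$; and for $r\geq 5$ the only nontrivial proper normal subgroup of $S_r$ is $A_r$, whose order is divisible by $r$, so $\Oo_\pi(S_r)=1$. The remaining cases ($r=2,3$, which are the only possibilities when $m$ is small, as forced by $r-1>m$) follow at once by inspecting the short list of normal subgroups.

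For the main assertion, let $t_1,\dots,t_m$ be transpositions in $S_r$ (repetitions allowed) and put $H=\la t_1,\dots,t_m\ra$. Consider the graph $\Gamma$ with vertex set $\{1,\dots,r\}$ whose edges are the transposed pairs; it has at most $m$ edges. I would invoke the standard structural fact --- provable in a line via spanning trees, since the $c-1$ transpositions along a tree on $c$ vertices generate the full symmetric group on those vertices --- that $H=\prod_{j}\Sym(C_j)$, the direct product being taken over the connected components $C_1,\dots,C_s$ of $\Gamma$ having at least two vertices. Because a connected component on $c_j=|C_j|$ vertices absorbs at least $c_j-1$ of the at most $m$ edges, we get $c_j\leq m+1$, and the hypothesis $r-1>m$ yields $c_j\leq m+1\leq r-1<r$.

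It then remains to read off the primes. We have $|H|=\prod_j c_j!$, and every prime divisor of $c_j!$ is at most $c_j<r$, hence belongs to $\pi$ since $\pi$ contains every prime below $r$. Thus $|H|$ is a $\pi$-number, $H$ is a $\pi$-group, and the proof is complete. There is no genuine obstacle here: the only point requiring care is the precise description of groups generated by transpositions, which is classical, and everything else is bookkeeping around the single inequality $r-1>m$.
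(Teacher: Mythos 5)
Your argument is correct and essentially the same as the paper's: both associate to the transpositions the graph on $\{1,\dots,r\}$ and use that the generated subgroup sits inside the product of symmetric groups on the connected components, each of which has fewer than $r$ points because there are at most $m<r-1$ edges, so only primes below $r$ (hence in $\pi$) can occur. Your version merely adds harmless extras — the exact equality $H=\prod_j\Sym(C_j)$ instead of the containment the paper uses, and an explicit verification of $\Oo_\pi(S_r)=1$, which the paper simply states.
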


The proposition not only shows that, in general, the fact that $x$ together with any of its conjugates generates a $\pi$-sub\-gro\-up does not guarantee that $x$ lies in $\Oo_\pi(G)$.
It also shows that there does not exist $m$ such that, for every set of primes $\pi$ and for every finite group $G$, the equality
$$
\Oo_\pi(G)=\{x\in G\mid \la x_1,\ldots,x_m\ra \text{ is a }\pi\text{-group for every }x_1,\ldots,x_m\in x^G\}
$$
holds.

However we show that a weaker analogue of the Baer--Suzuki theorem for the $\pi$-radical  holds. The goal of this paper is to prove Theorems~\ref{t2} and~\ref{t3} below.

\begin{theorem}\label{t2}
Let $\pi$ be a set of primes. Then there exists a natural $m$ (depending on $\pi$) such that for every finite group $G$ the equality
$$
\Oo_\pi(G)=\{x\in G\mid \la x_1,\ldots,x_m\ra \text{ is a }\pi\text{-group for every }x_1,\ldots,x_m\in x^G\}
$$
holds, i.e. $x\in\Oo_\pi(G)$ if and only if every $m$ conjugates of $x$ generate a $\pi$-sub\-gro\-up.
\end{theorem}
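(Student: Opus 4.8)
The ``if'' direction is immediate and uses no hypothesis on $m$: since $\Oo_\pi(G)\trianglelefteq G$ is a $\pi$-group, $x\in\Oo_\pi(G)$ forces $x^G\subseteq\Oo_\pi(G)$, so every $\langle x_1,\dots,x_m\rangle$ with $x_i\in x^G$ is a $\pi$-group. For the converse I would argue by contradiction with a minimal counterexample. Write $\Oo^{(m)}_\pi(H)$ for the set on the right attached to a finite group $H$; it behaves well under epimorphisms $H\twoheadrightarrow\bar H$ (images of conjugates of $x$ are exactly the conjugates of $\bar x$, and the image of a $\pi$-group is a $\pi$-group) and under passage to normal subgroups, and $\Oo_\pi$ does the same, so the standard reductions apply. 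Note first that $(x,\dots,x)$ is an admissible tuple, so $\langle x\rangle$ is a $\pi$-group, i.e.\ $x$ is a $\pi$-element. Now fix $\pi$, let $m=m(\pi)$ be chosen at the end, and let $(G,x)$ with $x\in\Oo^{(m)}_\pi(G)\setminus\Oo_\pi(G)$ have $|G|$ least possible. Factoring out $\Oo_\pi(G)$ gives $\Oo_\pi(G)=1$; replacing $G$ by $\langle x^G\rangle$ (a normal subgroup whose $\pi$-radical is still trivial) gives $G=\langle x^G\rangle$. If $R:=\Oo_{\pi'}(G)\neq1$, then either some $v\in R$ has $v^x\neq v$, in which case $x^{-1}x^v=(v^x)^{-1}v$ is a non-trivial element of the $\pi'$-group $R$ lying in $\langle x,x^v\rangle$, so two conjugates of $x$ generate a non-$\pi$-group --- contradiction; or $[R,x]=1$, hence $[R,G]=1$ as $G=\langle x^G\rangle$, so $R\leq Z(G)$, $\Oo_\pi(G/R)=1$, and minimality forces $x$ into $R$, which is impossible for a non-trivial $\pi$-element. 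So $\Oo_{\pi'}(G)=1$, hence $F(G)=1$, $F^*(G)=E(G)=S_1\times\dots\times S_n$ with the $S_i$ non-abelian simple, $C_G(F^*(G))=1$, and no $S_i$ is a $\pi$-group (else the product of the $G$-conjugates of $S_i$ would be a non-trivial normal $\pi$-subgroup).

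Let $\rho\colon G\to\Sym_n$ describe the conjugation action of $G$ on $\{S_1,\dots,S_n\}$. If it is intransitive, choose an orbit whose product $N$ is not centralised by $x$ (possible, since the centralisers of the orbit-products meet in $C_G(F^*(G))=1$) and pass to $G/C_G(N)$: this group satisfies the same hypotheses and is smaller, so minimality makes the image of $x$ trivial --- contradiction. Hence $G$ is transitive on the factors; write $S$ for the common isomorphism type, a non-abelian simple non-$\pi$-group. Since $x$ is a $\pi$-element, so is $\rho(x)$. If $\rho(x)\neq1$, relabel so that $x$ moves $S_1$ to a different factor and pick $s\in S_1$ of prime order $r\in\pi'$ (possible, $S$ not a $\pi$-group): then $x^{-1}x^s=(s^x)^{-1}s$ is a product of two commuting conjugate elements of order $r$ lying in distinct direct factors, hence has order $r$, so again two conjugates of $x$ generate a non-$\pi$-group --- contradiction. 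Therefore $\rho(x)=1$, i.e.\ $x$ normalises every $S_i$.

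This is the essential case, where the classification of finite simple groups enters. Put $K=\ker\rho$ and let $A\leq\Aut(S)$ be the image of the projection of $K$ onto $\Aut(S_1)\cong\Aut(S)$; then $A$ is almost simple with socle the copy of $S$, which is not a $\pi$-group. Since $x$ acts non-trivially on $F^*(G)$ and $G$ is transitive on the factors, some $G$-conjugate $x'$ of $x$ lies in $K$ and has non-trivial image $b\in A$ under the projection; moreover every $A$-conjugate of $b$ is the image of a $G$-conjugate of $x'$ (lift $a\in A$ to $\tilde a\in K$, then $(x')^{\tilde a}\mapsto b^a$). So the whole problem reduces to the following assertion, which I would isolate as a lemma and prove by running through the families of finite simple groups: \textbf{Key Lemma.} \emph{For every set of primes $\pi$ there is a natural number $m_0=m_0(\pi)$ such that for every almost simple finite group $A$ whose socle is not a $\pi$-group and every $b\in A^\sharp$, some $m_0$ conjugates of $b$ generate a subgroup of $A$ that is not a $\pi$-group.} Note that $\langle b^A\rangle$ always contains the socle and hence is never a $\pi$-group, so the content is entirely the uniform bound on the number of conjugates.

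Granting the Key Lemma, lift the $m_0$ conjugates $b^{a_i}$ to $m_0$ conjugates of $x$ inside $K$: the subgroup they generate maps onto $\langle b^{a_1},\dots,b^{a_{m_0}}\rangle$, whose order is divisible by a prime in $\pi'$, so that subgroup is not a $\pi$-group, contradicting $x\in\Oo^{(m)}_\pi(G)$ as soon as $m=m(\pi)\geq m_0(\pi)$ (taking also $m\geq2$, as the two commutator arguments above require); this contradiction proves the theorem. I expect the Key Lemma to be the main obstacle. After disposing of $b$ whose order is divisible by a prime of $\pi'$ (one conjugate suffices), one is left with $\pi$-elements $b$, and for each family --- alternating, classical, exceptional (finitely many types), sporadic (finitely many groups) --- one must exhibit a subgroup of order divisible by some prime in $\pi'$ that is generated by a number of $S$-conjugates of $b$ bounded purely in terms of $\pi$: for instance a bounded-rank alternating, classical, or subfield subgroup carrying suitable $\pi'$-torsion (e.g.\ an $r$-cycle, a torus element of order $r$, or a subgroup of order divisible by a primitive prime divisor of some $q^i-1$), positioned by using the transitivity of $S$, or of $\Aut(S)$, on the relevant configurations of conjugates. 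The genuinely delicate point is the uniformity of $m_0$ in $S$; that $m_0$ must be permitted to depend on $\pi$ --- and can be forced to be large --- is exactly what Proposition~\ref{ex1} shows.
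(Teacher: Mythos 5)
Your reduction is sound and, in substance, reproduces what the paper imports wholesale from \cite{BS_odd} as Lemma~\ref{red}: minimal counterexample, $\Oo_\pi(G)=\Oo_{\pi'}(G)=1$, hence $F^*(G)=E(G)$ a product of non-$\pi$ simple factors with trivial centralizer, the two-conjugate commutator tricks forcing $x$ to normalize every factor, and the projection onto an almost simple group $A$ with socle $S$ in which $A$-conjugates of $b$ lift to $G$-conjugates of $x$. (One local imprecision: in the intransitive case $G/C_G(N)$ does not obviously satisfy $\Oo_\pi=1$, so minimality gives only $\bar x\in\Oo_\pi(G/C_G(N))$; you then still get a contradiction because that radical intersects $NC_G(N)/C_G(N)\cong N$ trivially and hence centralizes it, forcing $[x,N]\leq N\cap C_G(N)=1$ --- fixable, but it should be said.) The reduction of your Key Lemma to elements $b$ of prime order in $\pi$ is also fine.

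The genuine gap is the Key Lemma itself, which you state, correctly identify as the main obstacle, and do not prove. That lemma is precisely the hard content of the paper: for $r=\min\pi'$ odd it is Theorem~\ref{t4} (together with Lemma~\ref{2notinpi} for $r=2$), namely that for every simple $L$ and every prime-order $x\in\Aut(L)$ either $\alpha(x,L)\leq\max\{11,r-1\}$ (when $r\nmid|L|$) or $\beta_r(x,L)\leq\max\{11,2(r-2)\}$ (when $r\mid|L|$). Establishing the uniform bound is not a matter of ``positioning a bounded-rank subgroup with $\pi'$-torsion by transitivity'': the paper needs the Guralnick--Saxl bounds on $\alpha(x,L)$ \cite{GS} for sporadic, exceptional, alternating and small-rank classical groups, and for large-rank classical groups an induction on $|L|$ organized by the type of $x$ (unipotent via parabolic subgroups, semisimple via invariant nondegenerate subspaces or Levi complements, then field, graph-field and graph automorphisms via Lemmas~\ref{Field_Aut}, \ref{GraphAutGLU}, \ref{Graph_Inv_D_n}, \ref{Graph_Inv_D_n_even_q}), combined with the arithmetic lemmas \ref{r_not divi}--\ref{non_non-degenerated_divisor} guaranteeing that the smaller group one descends to still has order divisible by $r$. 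None of this, nor any substitute for it, appears in your sketch, and the uniformity in the rank of $S$ for fixed $\pi$ --- which you yourself flag as the delicate point --- is exactly what that case analysis delivers. So as written the proposal proves only the routine reduction step and leaves the theorem's actual substance unestablished.
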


Similarly to~\cite[Definition~1.15]{GGKP}, given a set  of primes $\pi$,  let $\bs(\pi)$ be the minimal $m$ such that the conclusion of Theorem~\ref{t2} remains valid,
i.~e. the minimal $m$ such that, for every finite group $G$ and its element  $x$, if any  $m$ conjugates of  $x$ generate a $\pi$-sub\-gro\-up then  $x\in \Oo_\pi(G)$.
In other words  $m<\bs(\pi)$ if and only if there exists a group  $G$ and  $x\in G\setminus \Oo_\pi(G)$ such that any $m$ conjugates of  $x$ generate a $\pi$-sub\-gro\-up. Replacing
$G$ with $G/\Oo_\pi(G)$ and taking the image of $x$ in $G/\Oo_\pi(G)$, we see that  $\bs(\pi)$ can be defined by the condition that if $m<\bs(\pi)$ then there exists  $G$ with
$\Oo_\pi(G)=1$ and a nonidentity $x\in G$ such that every  $m$ its conjugates generate a  $\pi$-sub\-gro\-up.

The conclusion of Theorem~\ref{t2} is evident if $\pi$ is the set of all primes. In this case, we can take $m$ in the statement of Theorem~\ref{t2} to be equal to $1$ (and even  $0$).
The Baer--Su\-zu\-ki theorem implies that if $\pi=\{p\}$, then $\bs(\pi)=2$. V.N.\,Tyutyanov \cite{Tyut1} and, much later, the second author in~\cite{BS_odd} showed that if  $2\notin\pi$
then $\bs(\pi)=2$. In the paper, we prove the following

\begin{theorem}\label{t3}
Let $\pi$ be a proper subset of the set of all primes. Then
 $$
r-1\leq \bs(\pi)\leq \max\{11,2(r -2)\},
$$
where $r$ is the minimal prime not in~$\pi$.
\end{theorem}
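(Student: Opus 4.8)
\medskip
\noindent\emph{Proof plan.}
The bound $r-1\le\bs(\pi)$ is immediate from Proposition~\ref{ex1}: in $G=S_r$ one has $\Oo_\pi(G)=1$ (the nontrivial proper normal subgroup $A_r$ has order divisible by $r$), a transposition $x$ is a $\pi$-element because every prime below $r$ lies in $\pi$, and any $r-2$ transpositions generate a direct product of symmetric groups on pairwise disjoint sets of size at most $r-1$, hence a $\pi$-group. For the upper bound put $m=\max\{11,2(r-2)\}$; since $\bs(\pi)=2$ whenever $2\notin\pi$ by \cite{Tyut1,BS_odd}, we may assume $2\in\pi$, so $r\ge3$. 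Suppose $m<\bs(\pi)$ and let $G$ be of least order carrying $x\in G^{\sharp}$ with $\Oo_\pi(G)=1$ such that every $m$ conjugates of $x$ generate a $\pi$-group; the usual minimality reductions give $G=\la x^G\ra$, and $x$ is a $\pi$-element (take $x_1=\dots=x_m=x$). A minimal normal subgroup $N$ of $G$ is not a $\pi$-group. If $N$ were elementary abelian, say a $p$-group with $p\notin\pi$, then either $[N,x]\ne1$ and, for $v\in N$ with $[x,v]\ne1$, the subgroup $\la x,x^v\ra$ contains the nontrivial $p$-element $x^{-1}x^v=[x,v]$ and is not a $\pi$-group, contradicting $m\ge2$; or $[N,x]=1$, so $N$ centralizes $x^G$, hence $N\le Z(G)$ and $\Oo_\pi(G/N)=1$ (Schur--Zassenhaus on preimages of normal $\pi$-subgroups together with a characteristic-subgroup argument), and minimality forces the image of $x$ in $G/N$ to be trivial, whence $x\in N\le Z(G)$ and $G=\la x^G\ra=\la x\ra$ is a $\pi$-group, so $G=1$ --- a contradiction. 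Hence $G$ has no abelian minimal normal subgroup, so $F(G)=1$, $F^{*}(G)$ is the socle of $G$ and is a direct product of nonabelian simple groups, and $C_G(F^{*}(G))=1$; as $x\ne1$, some minimal normal subgroup $N=S_1\times\dots\times S_k$ of $G$, with $S_i\cong S$ a nonabelian simple group that is not a $\pi$-group, satisfies $[N,x]\ne1$.

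Fix such an $N$. For $n\in N$ one has $x^nx^{-1}=n^{-1}n^x$, so $\{x^nx^{-1}:n\in N\}$ generates $[N,x]$; since $[N,x]$ is normal in $N\la x\ra$, lies in the product of simple groups $N$, and is permuted by $x$, it equals $\prod_{i\in I}S_i$ for a nonempty $x$-invariant set $I$, and so is not a $\pi$-group. It therefore suffices to exhibit $m$ conjugates of $x$ generating a non-$\pi$-group, and for this it is enough to find $n_1,\dots,n_{m-1}\in N$ with $\la x^{n_1}x^{-1},\dots,x^{n_{m-1}}x^{-1}\ra$ not a $\pi$-group, because that subgroup lies in $\la x,x^{n_1},\dots,x^{n_{m-1}}\ra$. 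Let $\sigma$ be the permutation $x$ induces on the $k$ factors of $N$; it stabilizes $I$. If $\sigma$ moves some $j\in I$, the projection of $\{x^nx^{-1}:n\in N\}$ to $S_j$ is all of $S_j$ (take $n$ trivial in coordinate $j$); choosing in it an element of order a prime $q\notin\pi$ dividing $|S|$ we obtain $n\in N$ with $\la x,x^n\ra$ of order divisible by $q$, so two conjugates already suffice. Otherwise $\sigma$ fixes $I$ pointwise, $x$ induces on each $S_j$ ($j\in I$) a nontrivial $\pi$-automorphism $\phi$, and the $j$-th coordinates of the elements $x^nx^{-1}$ with $n$ trivial outside coordinate $j$ are exactly the elements $s^{-1}s^{\phi}$, $s\in S_j$, which generate $[S_j,\phi]=S_j$. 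Thus the whole proof reduces to the following assertion, which is the crux and is proved using the classification of finite simple groups: \emph{if $S$ is a finite nonabelian simple group that is not a $\pi$-group and $\phi\in\Aut(S)$ is a nontrivial $\pi$-element, then there are $s_1,\dots,s_t\in S$ with $t\le m-1$ such that $\la s_1^{-1}s_1^{\phi},\dots,s_t^{-1}s_t^{\phi}\ra$ is not a $\pi$-group.}

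To prove this one goes through the classification. As $\{s^{-1}s^{\phi}:s\in S\}$ generates $S$ and $S$ is not a $\pi$-group, some prime $q\ge r$ with $q\notin\pi$ divides $|S|$, and the aim is to capture an element of order divisible by $q$ with few of these generators. If $S=A_n$, then $n\ge r$ (since $q\le n$), $\Aut(A_n)=S_n$ for $n\ne6$, $\phi$ is conjugation by a nontrivial $\pi$-element $g\in S_n$, and $s^{-1}s^{\phi}=g^sg^{-1}$; a combinatorial argument --- choosing conjugates whose supports chain together --- then shows that at most $2(r-2)$ of these products generate a subgroup having an orbit of length divisible by $r$, hence of order divisible by $r$. (When $g$ is a single transposition and $n=r$ one needs $r-1$ of them, so $2(r-2)$ is sharp up to a factor of two.) If $S$ is of Lie type or sporadic, one instead combines quantitative generation results for simple groups (a bounded number of suitably chosen conjugates of a nontrivial element generate $S$) with information on centralizers of $\pi$-automorphisms and on element orders to conclude that $11$ generators always suffice; the exceptional case $n=6$ and the diagonal, field and graph automorphisms are absorbed here. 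Combining the cases gives $t\le m-1$, hence $\bs(\pi)\le m=\max\{11,2(r-2)\}$.

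The principal obstacle is the alternating case of that last assertion: one must bound, uniformly over \emph{all} nontrivial $\pi$-elements $g\in S_n$, how many conjugates are needed so that the twisted products $g^sg^{-1}$ generate a subgroup whose order is divisible by some prime $\ge r$, and extract the clean bound $2(r-2)$; this is where the dependence on $\pi$ (through $r$) enters. A secondary difficulty is to verify that no family of simple groups of Lie type --- of unbounded rank, over arbitrary fields, and with every type of outer automorphism --- and no sporadic group ever forces more than $11$ generators, which rests on quantitative generation and fixed-point theorems for these groups.
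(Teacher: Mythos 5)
Your lower bound is exactly the paper's (Proposition~\ref{ex1}), and your reduction of the upper bound to a statement about one nonabelian simple group equipped with a nontrivial $\pi$-automorphism is sound in outline; the paper performs the same reduction by quoting Lemma~\ref{red} (from \cite{BS_odd}) instead of reproving it, and your treatment of the abelian minimal normal subgroup and of the case where $x$ permutes the simple factors nontrivially would go through. The problem is what comes after: your italicized assertion about simple groups is precisely the paper's Theorem~\ref{t4}, i.e.\ the main result whose proof occupies essentially the whole paper, and your sketch of it is not just incomplete --- it is wrong in the decisive case. You claim that for $S$ of Lie type (or sporadic) an absolute constant of $11$ conjugates always suffices to reach a non-$\pi$-subgroup, so that the dependence on $r$ enters only through the alternating case. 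For classical groups of unbounded rank no bound independent of $r$ can work. Take $\pi$ to be the set of all primes other than $r$ with $r\geq 13$, let $q$ be a prime power whose multiplicative order modulo $r$ equals $r-1$, let $L=L_n(q)$ with $n\geq r-1$ (so $r$ divides $|L|$), and let $x$ be induced by a transvection. Any $t$ conjugates of $x$ fix pointwise a subspace of codimension at most $t$, hence generate a subgroup whose order divides $q^{t(n-t)}|\GL_t(q)|$ modulo scalars; by the choice of $q$ this is prime to $r$ whenever $t\leq r-2$, so every subgroup generated by at most $r-2>11$ conjugates of $x$ (in particular every subgroup generated by your twisted elements $s_i^{-1}s_i^{\phi}$, which lie in a group generated by $t+1$ such conjugates) is a $\pi$-group. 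Thus $\beta_r(x,L)\geq r-1$, and ``quantitative generation'' cannot rescue the constant $11$: by Lemma~\ref{alpha_classic} and Table~\ref{tab2}, $\alpha(x,L)$ itself grows linearly in the rank for transvections, reflections and graph involutions.

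This is exactly why Theorem~\ref{t4} has its three-way shape and why its proof for classical groups is a genuine induction (steps $(i)$--$(xi)$): one descends from $L$ to classical subgroups of smaller rank whose order is still divisible by $r$ --- via parabolic subgroups (Lemma~\ref{Parabolic}, Lemma~\ref{non_parabolic_divisor}), stabilizers of nondegenerate subspaces (Lemma~\ref{non_non-degenerated_divisor}), and the classification and centralizer analysis of field, graph-field and graph automorphisms (Lemmas~\ref{Field_Aut}--\ref{Graph_Inv_D_n_even_q}) --- and this is where the bound $2(r-2)$ for $\beta_r$ comes from in the classical case, with $11$ serving only as the uniform bound for sporadic and exceptional groups. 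Your alternating-group argument is likewise only a plan (the paper proves the sharp bound $r-1$ there, Proposition~\ref{beta_A_n_prop}, by induction together with Jordan's theorem), but the essential gap is that your proof of Theorem~\ref{t3} reduces it to an unproved, and as stated for groups of Lie type false, form of Theorem~\ref{t4}.
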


The lower bound in Theorem  \ref{t3} follows by Proposition~\ref{ex1}.

In order to prove Theorem \ref{t2} and  obtain the upper bound in Theorem  \ref{t3}, we use the reduction to almost simple groups obtained in  \cite{BS_odd} (see Lemma~\ref{red}), and the results by R.Guralnick and J. Saxl~\cite{GS}. They obtained, for every finite simple group $L$ and every automorphism  $x\in \Aut(L)$ of prime order, upper bounds\footnote{For sporadic groups these bounds were substantially improved in~\cite{DiMPZ}.} on~$\alpha(x,L)$ defined as follows.


\begin{definition} \cite{GS}\label{alphaxL}   Let  $L$ be a nonabelian simple group and  $x$ its nonidentity automorphism. Let $\alpha(x)=\alpha(x,L)$ be the minimal number  of $L$-conjugates of~$x$ which generate the group  $\la x, \Inn(L)\ra$.
\end{definition}

Similarly to Definition \ref{alphaxL}, we introduce the number $\beta_r(x,L)$ which also plays an important role in this paper.

\begin{definition} Let  $r$ be a prime divisor of the order of a nonabelian simple group $L$. For a nonidentity automorphism  $x$ of $L$, denote by
$\beta_r(x,L)=\beta_r(x)$ the minimal number of
$L$-conjugates of $x$ which generate a subgroup of order divisible by~$r$.
\end{definition}

It follows immediately from the definitions, that, for every nonabelian simple group $L$ and every nonidentity $x\in\Aut(L)$, if $r$ divides $|L|$ then the inequality
$$
\beta_r(x,L)=\beta_{r}(x)\leq \alpha(x,L)
$$
holds.
The results from  \cite{BS_odd} imply that $\beta_2(x,L)\leq2$ for any nonabelian simple group  $L$ and its nonidentity automorphism~$x$.

In this paper, we always assume that   $r$ is an odd prime.  We provide a rather rough upper bound for $\beta_r(x,L)$  depending on  $r$ only, where $L$ is a simple group of order divisible by $r$, and  $x\in \Aut(L)^\sharp$. It follows by definition that if  $y\ne 1$ is a power of~$x$, then $\beta_r(x,L)\leq\beta_r(y,L)$. In particular, it is enough to find upper bounds for  $\beta_r(x,L)$ in the case when the order of  $x$ is prime. We derive Theorem~\ref{t2} and the upper bound for $\bs(\pi)$ in Theorem~\ref{t3} from the following statement which is the main result of this paper.

\begin{theorem}\label{t4}
Let $r$ be an odd prime,  $L$ a nonabelian simple group, and let  $x\in\Aut(L)$ be of prime order. Then one of the following statements is true.
\begin{itemize}
  \item[$(1)$] $\alpha(x,L)\leq 11$;
  \item[$(2)$] $L$ is isomorphic to an alternating or classical group of Lie type, the order of  $L$ is not divisible by  $r$, and
  $\alpha(x,L)\leq r-1$;
  \item[$(3)$] $L$ is isomorphic to an alternating or  classical group of Lie type, the order of  $L$ is divisible by  $r$, and
  $\beta_r(x,L)\leq 2(r-2)$.
 \end{itemize}
In particular, if  $r$ divides   $|L|$ then $\beta_r(x,L)\leq\max\{11, 2(r-2)\}$, and if $r$ does not divide~$|L|$, then $\alpha(x,L)\leq \max\{11, r-1\}\leq \max\{11, 2(r-2)\}$.
\end{theorem}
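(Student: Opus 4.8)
The plan is to prove Theorem~\ref{t4} by running through the classification of finite simple groups, treating the sporadic, alternating, exceptional, and classical cases separately, and in each case either bounding $\alpha(x,L)$ by an absolute constant or, when $r\mid|L|$, producing a short generating tuple whose span has order divisible by $r$. For the sporadic groups and the Tits group, I would simply invoke the Guralnick--Saxl bounds \cite{GS} (sharpened in \cite{DiMPZ}), which give $\alpha(x,L)\le 6$ or so in all cases, placing these groups in conclusion~(1). For exceptional groups of Lie type, again the results of \cite{GS} yield a uniform bound on $\alpha(x,L)$ (something like $\alpha(x,L)\le 7$), so these too land in case~(1); here one must only check that no exceptional group sneaks past the stated constant $11$, which it does not.

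The substantive work is with the alternating groups and the classical groups, where $\alpha(x,L)$ genuinely grows with the rank/degree and cannot be bounded absolutely. For $L=A_n$ (and $x\in\Aut(A_n)=S_n$ for $n\ne 6$), I would analyze $x$ by its cycle type. If $r\nmid|A_n|$, i.e. $r>n$, then $n\le r-1$ and one shows that $r-1$ conjugates of any nontrivial $x$ generate $A_n$ or $S_n$ as appropriate — this is a direct combinatorial argument on how many conjugates of a fixed permutation are needed to generate, and it is governed by $n$, hence by $r-1$. If $r\le n$, then $r\mid|A_n|$ and I would exhibit two conjugates of a suitable power of $x$ (or a bounded number, at most $2(r-2)$, but in fact far fewer) whose product or commutator has order divisible by $r$: for instance, moving a given $r$-subset into position so that the joint support supports an $r$-cycle. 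The key technical point is handling elements $x$ with very small support or special structure (transpositions, for which Proposition~\ref{ex1} shows the bound $r-1$ is essentially sharp), and confirming the $\beta_r$ bound of $2(r-2)$ is never exceeded.

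For classical groups $L$ over $\GF(q)$ the argument is the most delicate. Write $x$ as an automorphism of prime order — inner-diagonal, field, graph, or graph-field — and let $d$ be the dimension of the natural module. If $r\nmid|L|$, then $r$ does not divide $|\GL_d(q)|$, which forces the multiplicative order of $q$ modulo $r$ to exceed $d$ (roughly), so $d$ is bounded in terms of $r$; combined with the generation bounds of \cite{GS} this gives $\alpha(x,L)\le r-1$ after a case check. If $r\mid|L|$, I want to find a bounded number of $L$-conjugates of $x$ generating a subgroup of order divisible by $r$; the natural strategy is to locate inside $L$ a smaller classical (or torus) subgroup of order divisible by $r$ that is generated by few conjugates of $x$, reducing to the low-dimensional situation where the multiplicative order of $q$ mod $r$ is at least, say, $d-1$, and then the number of conjugates needed is controlled by that order, hence by $r$. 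The main obstacle will be the uniformity across the four families and the awkward small cases — unipotent elements, transvections, reflections, field automorphisms of order $r$ itself — where one must argue by hand that $2(r-2)$ conjugates suffice, and where the interplay between the prime $r$, the characteristic $p$, and the order of $q$ mod $r$ has to be tracked carefully. Once Theorem~\ref{t4} is established in all cases, the final ``in particular'' clause is immediate: the inequality $\beta_r(x,L)\le\alpha(x,L)$ folds case~(2) into the bound $\max\{11,r-1\}\le\max\{11,2(r-2)\}$ (valid since $r\ge 3$), and cases~(1) and~(3) give directly $\beta_r(x,L)\le\max\{11,2(r-2)\}$.
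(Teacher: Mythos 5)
Your overall frame (run through the classification, quote Guralnick--Saxl for sporadic and exceptional groups, treat alternating groups combinatorially, and reduce classical groups of large rank to smaller subgroups) is the same as the paper's, and your handling of the case $r\nmid|L|$ — bounding the dimension in terms of $r$ via the order of $q$ modulo $r$ and then applying the $\alpha$-bounds of \cite{GS} — is exactly what the paper does. But in the heart of the matter, the classical groups with $r\mid|L|$, your proposal has a genuine gap: you say you will ``locate inside $L$ a smaller classical (or torus) subgroup of order divisible by $r$ that is generated by few conjugates of $x$,'' with the number of conjugates ``controlled by the multiplicative order of $q$ mod $r$.'' That is not a workable mechanism, and it is not how the bound $2(r-2)$ arises. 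What is actually needed (and what the paper does) is an induction on $|L|$ via a minimal counterexample: one shows that $x$ normalizes but does not centralize a smaller simple classical section $S$ with $r\mid|S|$, so that the induced automorphism $\overline{x}\in\Aut(S)$ is nontrivial and the inductive bound $\beta_r(\overline{x},S)\leq\max\{11,2(r-2)\}$ pulls back to $L$ (Lemma~\ref{leadsto}). Producing such an $S$ requires a case division by the type of $x$ — unipotent (via Lemma~\ref{Parabolic} and parabolic subgroups), semisimple inner-diagonal (irreducible, in a parabolic, or with a nondegenerate invariant subspace, via Lemmas~\ref{alpha_irreducible}, \ref{Parabolic}, \ref{SemisimpleInvariant}), field, graph-field and graph automorphisms (via Lemmas~\ref{Field_Aut}, \ref{GraphAutGLU}, \ref{Graph_Inv_D_n}, \ref{Graph_Inv_D_n_even_q} and the Guest--Levy Lemma~\ref{guest}) — and, crucially, one must verify that the smaller section really has order divisible by $r$. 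That verification is where the counterexample hypothesis $\beta_r(x,L)>2(r-2)$ earns its keep: it forces $n\geq 2r-3$ (linear/unitary) or $n\geq 2r-6$ (symplectic/orthogonal) via Table~\ref{tab2}, and these lower bounds feed into arithmetic lemmas (\ref{r_not divi}, \ref{non_parabolic_divisor}, \ref{non_non-degenerated_divisor}) showing $r$ divides the relevant Levi factors, stabilizers and centralizers. None of this interplay appears in your sketch; ``argue by hand that $2(r-2)$ conjugates suffice'' in the awkward cases is precisely the content of the theorem, not a step one can defer.

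Two smaller inaccuracies: for $A_n$ with $r\leq n$ you claim two conjugates (``in fact far fewer'' than $2(r-2)$) generically suffice; this fails for transpositions, where $\beta_r(x,A_n)=r-1$ exactly (Proposition~\ref{beta_A_n_prop}), and for the outer involution of $A_6$ with $r=3$, where $\beta_3=3$ — the paper's proof of the alternating case is an induction on $n$ splitting into fixed-point, intransitive and transitive (Jordan's theorem) cases. Also, the uniform exceptional-group bound from \cite{GS} is $11$, not $7$; the constant $11$ in the statement comes precisely from there, so it cannot be improved without redoing that analysis.
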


The number $11$ in Theorem~\ref{t4} (and, as a consequence, in Theorem~\ref{t3}) is a uniform bound for  $\alpha(x,L)$ in the case when  $L$ is a sporadic or exceptional Lie type group. The existence of such a bound follows by the results in~\cite{GS}. It is likely that a detailed investigation of  $\beta_r(x,L)$ in sporadic, exceptional, and classical Lie type groups of small ranks allows one to reduce or even remove the number $11$ from both Theorems~\ref{t3} and~\ref{t4}.  We think the bound  $2(r-2)$ for $\bs(\pi)$ and $\beta_r(x,L)$ is also too big. The authors do not know any counterexamples to the following statements (notice that the first is a corollary to the second).

\begin{Conj}
Let $\pi$ be a proper subset of the set of all primes containing at least two elements, and let $r$ be the minimal prime not in~$\pi$. Then $$\bs(\pi)=\left\{\begin{array}{rl}
                               r, & \text{ if } r\in\{2,3\}, \\
                               r-1, &   \text{ if } r\geq 5.
                             \end{array}\right.
$$
\end{Conj}

\begin{Conj}
For any nonabelian simple group $L$ of order divisible by~$r$ and its every automorphism $x$ of prime order, we have
$$\beta_r(x,L)\leq\left\{\begin{array}{rl}
                               r, & \text{ if } r\in\{2,3\}, \\
                               r-1, &   \text{ if } r\geq 5.
                             \end{array}\right.
$$
\end{Conj}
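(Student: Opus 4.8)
The plan is to prove the sharp bound by the same three-tier reduction that underlies Theorem~\ref{t4}, but pushing each tier to optimality. Since $\beta_r(x,L)\leq\beta_r(y,L)$ whenever $y\neq1$ is a power of $x$, it suffices to treat $x$ of prime order, and the case $r=2$ is already settled by $\beta_2(x,L)\leq2$ from~\cite{BS_odd}. Thus I assume $r$ is odd and invoke the classification of finite simple groups to split into $L$ alternating, sporadic, exceptional of Lie type, or classical of Lie type. The guiding heuristic, visible already in Proposition~\ref{ex1}, is that the extremal configuration is the ``transposition'' one: to capture an element of order $r$ one must link together $r$ ``points'' of the natural object on which $L$ acts, and the cheapest generator (a transposition, or a transvection in the Lie-type analogue) contributes only one unit toward this linkage, so that $r-1$ conjugates are both necessary and sufficient in the worst case, while every richer element does strictly better.

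For $L=A_n$ the argument is purely permutation-theoretic. A nonidentity $x$ of prime order is realized in $\Sym(n)$ (with the two exceptional outer automorphisms of $A_6$ handled by direct inspection), and for conjugates $x^{g_1},\dots,x^{g_k}$ the subgroup $H=\la x^{g_1},\dots,x^{g_k}\ra$ has order divisible by $r$ exactly when some $H$-orbit has length divisible by $r$, or $H$ contains an $r$-element inside a single orbit. One shows that if $x$ is a transposition then the supports of $k$ conjugates span a graph with $k$ edges whose components have size $\leq k+1$, so that $r-1$ conjugates are needed to create a component of size $r$ and $r-1$ suffice (a path on $r$ vertices), giving $\beta_r=r-1$; and that any $x$ moving more points, or of odd prime order, reaches such a component (or directly an $r$-element) with fewer conjugates. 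Small $n$, where $r$ is close to $n$, and the $r=3$ value of $3$ are then checked by hand.

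For classical $L$ over $\mathbb{F}_q$ with natural module $V$ and $r\neq p=\mathrm{char}\,\mathbb{F}_q$, the key invariant is $e=e_r(q)$, the multiplicative order of $q$ (respectively of $-q$, or of $q$ in the appropriate twisted sense for unitary and orthogonal types) modulo $r$; by Fermat $e\leq r-1$, and $r$ divides the order of the classical group on a nondegenerate section precisely when its dimension is at least $e$. The extremal case is again $x$ a transvection: a subgroup $H$ generated by $k$ transvections satisfies $\dim[V,H]\leq k$ and embeds, modulo its unipotent radical, into the classical group on $[V,H]$, so $r$ divides $|H|$ only when $\dim[V,H]\geq e$; thus at least $e$ conjugates are required, while $e\leq r-1$ suitably chosen transvections generate a subgroup of order divisible by $r$ (for instance a subsystem $\SL_e(q)$-type subgroup), the generated subgroup being controlled through the Kantor--McLaughlin description of groups generated by transvections. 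For $x$ not of transvection type one bounds its residue, or fixed-space codimension, from below and argues that fewer conjugates build the required $e$-dimensional commutator space. The sporadic groups, the exceptional groups, and the classical groups of rank below $r-1$ are then handled from finite data: for sporadic $L$ via structure-constant and character-table computations refining~\cite{DiMPZ}, and for exceptional and small-rank classical $L$ by a uniform case analysis replacing the coarse $\alpha$-estimates of~\cite{GS} with direct $\beta_r$-computations.

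The principal obstacle is the sharpening itself: Theorem~\ref{t4} yields only $\beta_r\leq\max\{11,2(r-2)\}$, and both the constant $11$ and the factor $2$ must be removed. Eliminating $11$ demands genuinely computing $\beta_r$ — not merely $\alpha$ — in every sporadic and exceptional group and for every prime $r$ dividing the order, where $\beta_r$ is typically far smaller than $\alpha$ but must be verified never to exceed $r-1$ (or $3$ when $r=3$); this is finite but laborious, and for the exceptional families one additionally needs bounds uniform in the varying $q$. Removing the factor $2$ in the main classical case is the conceptual heart: one must show that $e\leq r-1$ conjugates already suffice, i.e.\ that the commutator space can be grown to dimension $e$ one transvection-unit at a time while the generated subgroup attains order divisible by $r$, and simultaneously that no element type — in particular the twisted unitary and orthogonal cases and elements acting as field or graph automorphisms — is worse than the transvection case. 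Pinning down the genuine extremum, and confirming the small upward correction to $3$ at $r=3$, is where the delicate work lies.
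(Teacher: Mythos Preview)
The statement you are attempting to prove is Conjecture~2 in the paper, and the paper does \emph{not} prove it. It is explicitly presented as an open problem: the authors write that they ``do not know any counterexamples'' and leave both Conjecture~1 and Conjecture~2 unproven. The main result of the paper, Theorem~\ref{t4}, establishes only the weaker bound $\beta_r(x,L)\leq\max\{11,2(r-2)\}$, and the authors themselves remark that removing the constant $11$ and the factor $2$ would require ``a detailed investigation of $\beta_r(x,L)$ in sporadic, exceptional, and classical Lie type groups of small ranks'' which they do not carry out.

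Your proposal is therefore not comparable to a proof in the paper, because no such proof exists. What you have written is a plausible research programme, and you yourself identify the genuine obstacles in your final paragraph: eliminating the constant $11$ requires exhaustive $\beta_r$-computations (not just $\alpha$-bounds) in all sporadic and exceptional groups for every relevant prime $r$, and eliminating the factor $2$ in the classical case requires showing that $e_r(q)\leq r-1$ conjugates of a transvection already produce a subgroup of order divisible by $r$, together with verifying that no other automorphism type is worse. These are precisely the steps the paper declines to attempt. Your heuristic that the transvection is extremal is reasonable and is consistent with Proposition~\ref{beta_A_n_prop}, but turning it into a proof for all classical types, all field and graph automorphisms, and all small-rank exceptions is substantial unfinished work, not a gap that can be closed by routine arguments.
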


In the  case $r=2$, as we have noted above, both conjectures are true. There are examples  (see~\cite[Example~2]{BS_odd}, and Proposition~\ref{beta_A_n_prop} below) showing that, for $r=3$, the value $\beta_r(x,L)$ can be equal to~$3$. Thus the bound on $\bs(\pi)$ follows from the bounds for $\beta_r(x,L)$ for a nonabelian simple~$L$ and an odd prime~$r$. In the case of alternating groups, the sharp bound gives

\begin{Prop} \label{beta_A_n_prop}
Let  $L=A_n$, $n\geq 5$, let $r\leq n$ be a prime, and let $x\in \Aut (L)$ be of prime order. Then
\begin{itemize}
\item[$(1)$] $\beta_r(x,L)=r-1$ if $x$ is a transposition;
\item[$(2)$] if $r=3$, $n=6$, and $x$ is an involution not lying in $S_6$ then   $\beta_r(x,L)= 3$;
\item[$(3)$] $\beta_r(x,L)\leq r-1$ for all other $x$.
\end{itemize}
\end{Prop}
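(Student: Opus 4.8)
The plan is to treat the alternating and symmetric situations uniformly by working inside $S_n$ and keeping track of when an element actually lies in $A_n$ or induces an outer automorphism. Throughout, for $x\in\Aut(A_n)$ of prime order we may, by the inequality $\beta_r(x,L)\le\beta_r(y,L)$ for powers $y$ of $x$, reduce to $x$ of prime order, and since $\Aut(A_n)=S_n$ for $n\ne 6$ we usually think of $x$ as a permutation (the exceptional outer automorphisms of $A_6$ being handled separately in part~(2)). The quantity we must estimate is the least $k$ such that some $k$ conjugates $x^{g_1},\dots,x^{g_k}$ generate a subgroup of order divisible by $r$.

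First I would prove the lower bound in part~(1): if $x$ is a transposition and $r\le n$ is prime, then any $r-2$ conjugates of $x$ generate a subgroup whose order is not divisible by $r$. The point is that $r-2$ transpositions move at most $2(r-2)$ points, but more sharply, any set of transpositions generates a group that is a direct product of symmetric groups $S_{k_1}\times\cdots\times S_{k_t}$ on the orbits they determine, with $\sum(k_i-1)$ at most the number of transpositions; hence $r-2$ transpositions give $\sum(k_i-1)\le r-2$, so each $k_i\le r-1<r$ and no $S_{k_i}$ has order divisible by $r$. This is exactly the mechanism behind Proposition~\ref{ex1}, and it gives $\beta_r(x,A_n)\ge r-1$. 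For the matching upper bound I would exhibit $r-1$ transpositions generating $S_r$ (for instance the Coxeter generators $(1\,2),(2\,3),\dots,(r-1\,r)$ of a standard $S_r\le S_n$), which has order divisible by $r$; one must check these conjugates can be taken inside the conjugacy class of $x$ in $A_n$, which is immediate since all transpositions are $A_n$-conjugate for $n\ge 3$ once we note $S_n$-conjugacy and, if needed, adjust by an element of $A_n$ centralizing none of the relevant structure — here it suffices that for $n\ge 5$ the transpositions form a single $A_n$-class only up to the usual caveat, so one instead conjugates within $S_n$ and observes the generated subgroup is unchanged. This yields $\beta_r(x,A_n)=r-1$.

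Next, part~(3): for every $x$ of prime order in $\Aut(A_n)$ that is not a transposition (and, when $r=3$, not one of the exceptional involutions of $A_6$), I would show $\beta_r(x,A_n)\le r-1$, and in fact aim for the stronger $\beta_r(x,A_n)\le 2$ in almost all cases. The strategy is: if $x$ has order $p$ and its cycle type on $\{1,\dots,n\}$ already involves a cycle of length divisible by $r$, we are done with one conjugate; otherwise pick two conjugates $x,x^g$ whose supports overlap in a controlled way so that $\la x,x^g\ra$ contains an element of order $r$. The cleanest tool is the following: if $x$ moves at least three points (which holds whenever $x$ is not a transposition and not the identity), then by a suitable choice of $g$ the group $\la x,x^g\ra$ acts on a union of two overlapping supports and one can force it to contain $A_k$ for some $k\ge r$, using classical transitivity/primitivity arguments (Jordan's theorem: a primitive group containing a small-support element contains the alternating group). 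I would split into the case $p=r$ (then $x$ itself has order $r$, so one conjugate suffices and $\beta_r=1$), the case $p\ne r$ with $x$ having support of size $\ge r$ (two generic conjugates generate a transitive, then primitive, hence alternating or symmetric group on $\ge r$ points), and the residual small cases where $\operatorname{supp}(x)<r$; in that last case $x$ is a product of a bounded number of $p$-cycles with $p<r$, and one glues $r-1$ such conjugates along a path to build up $S_r$, exactly paralleling the transposition computation but with $p$-cycles, giving the bound $r-1$.

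The main obstacle I expect is part~(2), the genuine exception: when $r=3$, $L=A_6$, and $x$ is an involution in $\Aut(A_6)\setminus S_6$ (an outer involution not of the "transposition type"), one must show $\beta_3(x,A_6)=3$ rather than $2$. The lower bound $\beta_3(x,A_6)>2$ requires checking that for every such outer involution $x$ and every $g$, the subgroup $\la x,x^g\ra$ has order coprime to $3$ — this is a finite computation in $\mathrm{P\Gamma L}_2(9)\cong\Aut(A_6)$, listing the possible isomorphism types of $\la x,x^g\ra$ (they turn out to be $2$-groups or dihedral of order prime to $3$), and is the one place where the "uniform" argument above genuinely fails because $A_6$ has the extra outer automorphisms and its small permutation degree defeats the Jordan-type primitivity argument. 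The upper bound $\beta_3(x,A_6)\le 3$ is then a single explicit triple of conjugates generating a subgroup of order divisible by $3$ (e.g. landing on a copy of $A_4$ or larger), which one simply displays. I would cite \cite[Example~2]{BS_odd} for this exceptional behaviour and give the short verification directly.
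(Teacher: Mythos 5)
The core of your part (3) rests on a mechanism that fails. You claim that when $x$ has support of size $\geq r$ (and $|x|\neq r$), ``two generic conjugates generate a transitive, then primitive, hence alternating or symmetric group on $\geq r$ points'' via Jordan's theorem. If $x$ is an involution this is impossible: any two conjugate involutions generate a dihedral group, which never contains $A_k$ for $k\geq 4$, so the only way two conjugates can help is if the product $xx^g$ has order divisible by $r$ — a quite different (and sometimes unavailable) statement. Concretely, for $x$ of cycle type $2^3$ in $A_6$ and $r=5$, no two $L$-conjugates of $x$ generate a subgroup of order divisible by $5$ (if $t_1t_2$ were a $5$-cycle it would fix a point $f$, forcing $t_2(f)=t_1(f)$ and hence a second fixed point), so your claimed route to $\beta_r(x,L)\leq 2$ breaks down and you are left without an argument for such $x$; one must either do the product-of-involutions analysis by hand or, as the paper does, fall back on bounds for $\alpha(x,L)$. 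The paper avoids all of this by inducting on $n$: if $x$ fixes a point it lives in a stabilizer $S_{n-1}$; if it is intransitive without fixed points it lives in $S_m\times S_{n-m}$ and is ``drawn into'' a smaller symmetric group whose order is divisible by $r$ (with a short ad hoc analysis when $\max\{m,n-m\}\leq 4$, $r=3$); Jordan's theorem is used only in the genuinely transitive case, where $x$ is an $n$-cycle, $n$ prime, and the explicit element $x^{-1}x^{y^{-1}}=(134)$ supplies the needed $3$-cycle. Your proposal has no such structure, and the cases it does sketch (gluing $p$-cycles, ``generic'' pairs) are not carried far enough to be checkable.

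Two smaller points. In part (1) your worry about $A_n$-conjugacy is resolved simply: the centralizer of a transposition in $S_n$ contains an odd permutation, so all transpositions form a single $A_n$-class and the $r-1$ conjugates $(12),(13),\dots,(1r)$ (whose product is the $r$-cycle $(12\dots r)$, as in the paper) are legitimate; your suggested fix ``conjugate within $S_n$'' is not the right justification. In part (2) your lower bound is only a plan (``a finite computation in $\mathrm{P\Gamma L}_2(9)$''); the paper gives an actual argument: $\langle x,x^g\rangle$ is dihedral, so order divisible by $3$ would force $x$ to invert, hence normalize, a class of $3$-elements, whereas the outer involution interchanges the two classes of elements of order $3$ in $A_6$. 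As written, then, the proposal has a genuine gap in part (3) and leaves part (2) unverified.
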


Recall that the class of finite groups~$\mathfrak{X}$ is called {\em  radical} if in every finite group $G$ there exists {\em $\mathfrak{X}$-radical} $G_{\mathfrak{X}}$, i.e. the largest normal $\mathfrak{X}$-sub\-gro\-up \footnote{As usual, a group $X\in\mathfrak{X}$ is called an {\em $\mathfrak{X}$-gro\-up}. The largest normal $\mathfrak{X}$-sub\-gro\-up is the normal  $\mathfrak{X}$-sub\-gro\-up containing any other normal  $\mathfrak{X}$-sub\-gro\-up.}. According to~\cite[Definition~1.15]{GGKP}, let the {\em  Baer-Suzuki width} $\bs(\mathfrak{X})$ for a radical class $\mathfrak{X}$ be defined as the exact lower bound for the set of all natural numbers  $b$ such that, for every finite group $G$, the  $\mathfrak{X}$-ra\-di\-cal $G_{\mathfrak{X}}$  is equal to
$$\{x\in G\mid \langle x_1,\dots,x_b\rangle\in\mathfrak{X}\text{ for every }  x_1,\dots,x_b\in x^G\}.$$

N. Gordeev, F. Grunewald, B. Kunyavskii, E. Plotkin state the problem~\cite[Problem~1.16]{GGKP}: for what radical classes~$\mathfrak{X}$ the inequality $\bs(\mathfrak{X})<\infty$ holds? The results of this paper show that, for every set of primes $\pi$, the class of all  $\pi$-groups has finite Baer-Suzuki width. Moreover, we believe that the results of the paper make a substantial progress toward the solution of~\cite[Problem~1.16]{GGKP} in general. 

\section{Preliminaries}

\subsection{Reduction to almost simple groups and general lemmas}

\begin{definition}
\cite{BS_odd}  Let $\pi$ be a set of primes and  $m$ be a positive integer. We say that a finite group $G$ {\it lies in  ${\mathcal {BS}}_{\pi}^{m}$} ($G\in{{\mathcal B}{\mathcal S}}_{\pi}^{m}$), if
$$
\Oo_\pi(G)=\{x\in G\mid \langle x_1,\ldots,x_m\rangle \text{ is a }\pi\text{-group for every }x_1,\ldots,x_m\in x^G\}.
$$
\end{definition}

\begin{lemma}\label{red} {\em  \cite[Lemma~11]{BS_odd}} Suppose that  not all finite groups are contained in $\BS_{\pi}^{m}$ for some  $m\geq 2$, and choose ${G\notin\BS_{\pi}^{m}}$ of minimal order. Then  $G$ possesses a subgroup  $L$ and an  element $x$ such that
\begin{itemize}
\item[$(1)$] $L\trianglelefteq G$;
\item[$(2)$] $L$ is nonabelian simple;
\item[$(3)$] $L$ is neither  $\pi$- nor $\pi'$-gro\-up, where $\pi'$ is the complement of $\pi$ in the set of all primes;
\item[$(4)$] $C_G(L)=1$;
\item[$(5)$] any  $m$ conjugates of  $x$ generate a $\pi$-gro\-up;
\item[$(6)$] $x$ has prime order;
 \item[$(7)$] $G=\la x, L\ra$.
\end{itemize}
\end{lemma}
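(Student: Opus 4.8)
The plan is to argue by contradiction with a counterexample $G\notin\BS_\pi^m$ of minimal order, in the spirit of the classical Baer--Suzuki reduction. Since $\Oo_\pi(G)$ always lies in the right-hand side of the defining equality of $\BS_\pi^m$ (all conjugates of an element of $\Oo_\pi(G)$ stay in $\Oo_\pi(G)$), the failure of that equality provides $x\in G\setminus\Oo_\pi(G)$ such that every $m$ conjugates of $x$ generate a $\pi$-group. I would record two facts used repeatedly: for $N\trianglelefteq G$ the image of $x$ in $G/N$ again has the property that any $m$ of its conjugates generate a $\pi$-group (images of conjugates are conjugates, images of $\pi$-groups are $\pi$-groups), and any nontrivial power of $x$ inherits the property since its conjugates are the corresponding powers of conjugates of $x$. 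Minimality applied to $G/\Oo_\pi(G)$ then gives $\Oo_\pi(G)=1$, so $x\neq1$; replacing $x$ by a power of prime order yields $(6)$, and taking $x_1=\dots=x_m=x$ shows $\langle x\rangle$ is a $\pi$-group, so $p:=|x|\in\pi$. Minimality applied to $\langle x^G\rangle$ (whose $\pi$-radical is characteristic in it, hence normal in $G$, hence trivial) gives $G=\langle x^G\rangle$. The key consequence is a dichotomy: for every $1\neq N\trianglelefteq G$ the image of $x$ generates $G/N$ as a normal subgroup, so minimality forces either $x\in N$ (whence $G=N$) or $G/N$ to be a $\pi$-group.

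Next I would pin down $F^*(G)$. First, $\Oo_{\pi'}(G)=1$: suppose $N:=\Oo_{\pi'}(G)\neq1$. If $x$ does not centralize $N$, pick $n\in N$ with $x^n\neq x$; then $x^{-1}x^n$ is a nontrivial element of $N\cap\langle x,x^n\rangle$ of $\pi'$-order, so $\langle x,x^n,x,\dots,x\rangle$ on $m$ entries is not a $\pi$-group --- a contradiction. If $x$ centralizes $N$, then $G=\langle x^G\rangle\leq C_G(N)$, so $N\leq Z(G)$, and a minimal normal subgroup $N_0$ of $G$ inside $N$ is a central elementary abelian $q$-group with $q\notin\pi$ and $x\notin N_0$; the dichotomy gives $G/N_0$ a $\pi$-group, and Schur--Zassenhaus splits $G=N_0\times K$ with $K\cong G/N_0$ a normal $\pi$-subgroup, forcing $K=1$ and $G=N_0$ abelian, against $p\in\pi$. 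Hence $F(G)=1$, so $F^*(G)=E(G)\neq1$; the centre of any component lies in $F(G)=1$, so the components $L_1,\dots,L_t$ are nonabelian simple, and $C_G(E(G))=C_G(F^*(G))\leq E(G)$, so $C_G(E(G))\leq Z(E(G))=1$.

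The heart of the argument is to show $t=1$. If $x\in E(G)$ then $G=\langle x^G\rangle\leq E(G)$, so $G=L_1\times\dots\times L_t$ with each $L_i\trianglelefteq G$; writing $x=(y_1,\dots,y_t)$, every $y_i\neq1$ (else $\langle x^G\rangle$ misses a coordinate), and projecting to $L_1$ those conjugates of $x$ obtained by conjugating inside $L_1$ shows that any $m$ conjugates of $y_1$ in $L_1$ generate a $\pi$-group, while $\Oo_\pi(L_1)\trianglelefteq G$ is trivial; if $t\geq2$ this makes $L_1\notin\BS_\pi^m$ of smaller order, so $t=1$ and $G=L$ is simple. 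If $x\notin E(G)$ then $G/E(G)$ is a $\pi$-group; were the action of $G$ on $\{L_1,\dots,L_t\}$ intransitive, split as $\Delta\sqcup\Delta'$ with component products $E_\Delta,E_{\Delta'}$, then $x$ would lie outside both (as $x\notin E(G)$), so $G\hookrightarrow G/E_\Delta\times G/E_{\Delta'}$ would be a $\pi$-group, impossible; thus $G$ is transitive on the $L_i$, all isomorphic to a simple $L$ that is not a $\pi$-group (else $E(G)=\langle L^G\rangle\leq\Oo_\pi(G)=1$). If $t\geq2$ then, as $G=\langle x^G\rangle$ acts transitively on $t\geq2$ factors while being generated by conjugates of $x$, the element $x$ itself moves a factor and hence normalizes the product $D\cong L^p$ of one of its length-$p$ orbits; here $\Oo_\pi(\langle x,D\rangle)$ meets $D$ trivially, hence centralizes $D$, hence is $1$ (since $C_{\langle x,D\rangle}(D)=1$, $x$ permuting the $p$ factors nontrivially), so $\langle x,D\rangle$ is a counterexample and minimality gives $\langle x,D\rangle=G=D\langle x\rangle$; but then, taking $d=(w,1,\dots,1)\in D$ with $w\in L$ of order divisible by a prime $q\notin\pi$, the commutator $[x,d]=x^{-1}x^d\in D$ has a coordinate equal to $w$, hence order divisible by $q$, so $\langle x,x^d,x,\dots,x\rangle$ is not a $\pi$-group --- a contradiction. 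So $t=1$ throughout.

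It remains to collect $(1)$--$(7)$ with $L:=F^*(G)$. That $L$ is nonabelian simple and normal in $G$ with $C_G(L)=1$ gives $(1),(2),(4)$, and $(6)$ is already done. $L$ is not a $\pi$-group (else $L\leq\Oo_\pi(G)=1$); and if $L$ were a $\pi'$-group then $x\notin L$ (as $x\in L$ would force $p\mid|L|$), so, $C_G(L)=1$ forcing $x$ not to centralize $L$, a conjugate $x^l\neq x$ ($l\in L$) gives a nontrivial $\pi'$-element $x^{-1}x^l\in L\cap\langle x,x^l\rangle$, again contradicting the hypothesis on $x$; thus $L$ is neither $\pi$ nor $\pi'$, which is $(3)$. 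Finally $\langle x,L\rangle$ is itself a counterexample: its $\pi$-radical meets $L$ trivially by $(3)$, hence centralizes $L$, hence lies in $C_G(L)=1$, so $x\notin\Oo_\pi(\langle x,L\rangle)=1$, and $m$ conjugates of $x$ inside $\langle x,L\rangle$ are $m$ conjugates of $x$ in $G$ and so generate a $\pi$-group; minimality forces $G=\langle x,L\rangle$, which is $(7)$. I expect the step $t=1$ to be the main obstacle: excluding several, possibly permuted, components is exactly what produces an almost simple group, and the permuted case needs the explicit production above of a bounded number of conjugates of $x$ generating a non-$\pi$-subgroup.
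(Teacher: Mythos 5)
Your argument is correct, but note that the paper itself contains no proof to compare it with: Lemma~\ref{red} is simply quoted from \cite[Lemma~11]{BS_odd}, so what you have produced is an independent reconstruction rather than a variant of an in-paper argument. As a reconstruction it is sound and follows the standard reduction scheme: minimality of $G$ gives $\Oo_\pi(G)=1$, $G=\la x^G\ra$ and the dichotomy ``$x\in N$ or $G/N$ is a $\pi$-group''; the commutator trick $x^{-1}x^n=[x,n]\in N$ (using $m\geq 2$) kills $\Oo_{\pi'}(G)$ and later rules out $L$ being a $\pi'$-group; the analysis of $F^*(G)=E(G)$ with $C_G(E(G))\leq Z(E(G))=1$ is the right frame; and the two cases $x\in E(G)$ (projection to one factor contradicts minimality) and $x\notin E(G)$ with $x$ permuting the components (where $[x,d]$ for $d=(w,1,\dots,1)$, $w$ of order divisible by $q\notin\pi$, has a coordinate equal to $w$, so two conjugates of $x$ already generate a non-$\pi$-group) are handled correctly. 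Two small remarks: the detour showing $\la x,D\ra=G$ via minimality is superfluous, since the final commutator contradiction uses only that $x$ normalizes $D\cong L^p$ and permutes its factors in a $p$-cycle; and the phrase ``$x$ itself moves a factor'' deserves the one-line justification you implicitly use (the kernel $K$ of the action on components is normal, and $x\in K$ would force $G=\la x^G\ra\leq K$, contradicting transitivity for $t\geq 2$). With these cosmetic points tidied, your proof is a complete and self-contained substitute for the citation.
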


\begin{lemma} \label{2notinpi} {\em \cite[Theorem 1]{BS_odd}} If $2\not\in\pi$, then $\BS_\pi^2$ includes all finite groups.
\end{lemma}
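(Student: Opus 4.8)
One of the two inclusions defining $G\in\BS_\pi^2$ is trivial: every conjugate of an element of $\Oo_\pi(G)$ again lies in $\Oo_\pi(G)$, so if $x\in\Oo_\pi(G)$ then $\la x_1,x_2\ra\le\Oo_\pi(G)$ is a $\pi$-group for all $x_1,x_2\in x^G$. The reverse inclusion is the substance of the lemma, and for it the plan is to argue by contradiction, using the reduction already available in Lemma~\ref{red}. Assume that not all finite groups lie in $\BS_\pi^2$, and choose $G$ of minimal order with $G\notin\BS_\pi^2$. Lemma~\ref{red}, applied with $m=2$, furnishes a nonabelian simple normal subgroup $L\trianglelefteq G$ with $C_G(L)=1$ and an element $x$ of prime order with $G=\la x,L\ra$ such that $L$ is neither a $\pi$- nor a $\pi'$-group and every two $G$-conjugates of $x$ generate a $\pi$-group. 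Since $|L|$ is even and $2\notin\pi$, the group $L$ is automatically not a $\pi$-group; since $\la x\ra$ is contained in a $\pi$-group, the order of $x$ is an odd prime $q$; if $x$ centralized $L$ then $x\in C_G(L)=1$, so $x$ induces on $L$ a nontrivial automorphism of order $q$; and from $G=L\la x\ra$ together with the fact that $\la x\ra$ centralizes $x$ one gets $x^G=x^L$.

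Because $2\notin\pi$, every subgroup of $G$ of even order fails to be a $\pi$-group. Hence, in order to contradict condition~$(5)$ of Lemma~\ref{red}, it is enough to exhibit $g\in L$ with $|\la x,x^g\ra|$ even. Thus the lemma reduces to the following statement.

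\emph{Claim.} For every nonabelian finite simple group $L$ and every $x\in\Aut(L)$ of odd prime order, some two $L$-conjugates of $x$ generate a subgroup of even order; that is, $\beta_2(x,L)\le2$.

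The Claim is proved case by case using the classification of finite simple groups. For $L=A_n$ every automorphism of odd prime order $q$ lies in $S_n$ and is a product of disjoint $q$-cycles; an elementary construction with permutations then produces a second conjugate $x^g$ for which $\la x,x^g\ra$ has even order --- for example, two $q$-cycles overlapping in $q-1$ points generate a group transitive on $q+1$ points, and $q+1$ is even, so the order of this group is even --- the boundary cases $n\in\{q,q+1\}$ and products of several cycles being treated in the same elementary spirit (as in Proposition~\ref{beta_A_n_prop}). For the $26$ sporadic groups the Claim is a finite verification from the known data on conjugacy classes and maximal subgroups (compare the computations behind~\cite{GS,DiMPZ}). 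The main case is groups of Lie type: there $x$ is conjugate to an inner-diagonal, a field, a graph, or a graph-field automorphism of order $q$, and the argument splits according to whether or not $q$ is the defining characteristic and according to the type of $x$. In each subcase one exhibits two conjugates of $x$ that generate a subgroup of even order --- typically a subsystem subgroup of rank $1$ or $2$, a subfield subgroup, or the fixed-point subgroup of a related automorphism --- and such a subgroup, not being a $\pi$-group, contradicts condition~$(5)$.

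The hard part is precisely this Lie-type analysis. One must make the choice of the second conjugate $x^g$ work uniformly across all the infinite families and all four kinds of automorphism and, above all, guarantee that $\la x,x^g\ra$ never has odd order (by the Feit--Thompson theorem such a group would be solvable, but solvability alone does not rule it out). The delicate configurations are those of small Lie rank over small fields, where $\Aut(L)$ carries only a few classes of odd-prime-order elements and the supply of subsystem and subfield subgroups is thin; these require either ad hoc constructions or explicit computation, and they are also the configurations responsible for the constant being $2$ rather than $1$ --- already in $A_n$ a single $3$-cycle has odd order, while two suitably chosen $3$-cycles generate $A_4$. Once the Claim is established, $x$ together with the appropriate conjugate contradicts condition~$(5)$ of Lemma~\ref{red}, so no minimal counterexample exists, and $\BS_\pi^2$ therefore contains all finite groups.
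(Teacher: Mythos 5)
There is a genuine gap. In this paper the lemma is not proved at all: it is imported verbatim as \cite[Theorem~1]{BS_odd}, so the ``paper's proof'' is a citation of Revin's earlier work (and of Tyutyanov's). Your proposal correctly reproduces the outer shell of that work --- the trivial inclusion, the passage to a minimal counterexample, the application of Lemma~\ref{red} with $m=2$, and the observation that, since $2\notin\pi$, it suffices to show that $x$ and some $L$-conjugate $x^g$ generate a subgroup of even order, i.e.\ that $\beta_2(x,L)\leq 2$ for every nonabelian simple $L$ and every $x\in\Aut(L)$ of odd prime order. But that Claim \emph{is} the theorem. Everything after its statement is a plan, not a proof: the sporadic case is dismissed as ``a finite verification,'' the alternating case is only partially worked (single cycles with room to overlap; the cases $n\in\{q,q+1\}$ and products of several $q$-cycles are waved at), and for groups of Lie type --- which you yourself identify as the hard part --- you only describe the kind of subgroup one would like $\la x,x^g\ra$ to land in (subsystem, subfield, fixed-point subgroups) without exhibiting the conjugate $x^g$ in any family, let alone handling the small-rank/small-field configurations and ruling out the possibility that every $\la x,x^g\ra$ has odd order. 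You even note, correctly, that Feit--Thompson does not close this last loophole, but then leave it open.

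So the reduction step is sound and matches the known route (it is the exact analogue, for $r=2$, of how this paper later treats odd $r$ via Lemma~\ref{red} and bounds on $\beta_r$), but the mathematical substance of Lemma~\ref{2notinpi} --- the CFSG-based case analysis establishing $\beta_2(x,L)\leq 2$ --- is asserted rather than carried out. As it stands, the proposal reduces the lemma to precisely the statement it was supposed to prove; to complete it you would have to either supply the full case analysis (essentially re-proving \cite[Theorem~1]{BS_odd}) or cite that theorem, in which case the argument collapses to the citation the paper already gives.
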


Let  $G$ be fixed. For  $x\in G$ and a group $K$ we say that  $x$ {\it is drawn into  $K$} ($x\leadsto K,$)
if there exists a subgroup  $H\leq G$ such that
  \begin{itemize}
   \item[$(1)$] $x\in H$;
   \item[$(2)$] there exists an epimorphism $H$ on $K$;
   \item[$(3)$] the image of  $x$ in  $K$ under the epimorphism is nontrivial.
  \end{itemize}
The following statement is immediate from the definition.

 \begin{lemma} \label{leadsto}
Let $G$ and $K$ be almost simple groups with socles   $S$ and  $L$ respectively, and assume a prime $r$ divides the orders of both  $S$ and $L$. Assume inequality $\beta_r(y,L)\leq m$ for all $y\in K^\sharp$. If  $x\leadsto K$ for some  $x\in G^\sharp$, then $\beta_r(x,S)\leq m$.
\end{lemma}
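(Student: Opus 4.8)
The plan is to unwind the three defining conditions of $x \leadsto K$ and track the prime $r$ through the epimorphism. By hypothesis there is a subgroup $H \le G$ with $x \in H$, an epimorphism $\varphi\colon H \twoheadrightarrow K$, and $\bar x := \varphi(x) \ne 1$. Since $\bar x \in K^\sharp$ and $r$ divides $|L| = |\operatorname{soc}(K)|$, the assumption gives $\beta_r(\bar x, L) \le m$: there exist $m$ conjugates $\bar x^{\,k_1}, \dots, \bar x^{\,k_m}$ of $\bar x$ in $K$ (by elements $k_i \in K$) such that $|\langle \bar x^{\,k_1}, \dots, \bar x^{\,k_m}\rangle|$ is divisible by $r$. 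The goal is to pull this back to produce $m$ conjugates of $x$ inside $S = \operatorname{soc}(G)$ whose generated subgroup has order divisible by $r$.

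First I would lift the conjugating elements: for each $i$ choose $h_i \in H$ with $\varphi(h_i) = k_i$, which is possible because $\varphi$ is surjective. Then $\varphi(x^{h_i}) = \bar x^{\,k_i}$, so the subgroup $M := \langle x^{h_1}, \dots, x^{h_m}\rangle \le H$ satisfies $\varphi(M) = \langle \bar x^{\,k_1}, \dots, \bar x^{\,k_m}\rangle$, whose order is divisible by $r$. Since the order of a homomorphic image divides the order of the source, $r \mid |M|$. Thus $x^{h_1}, \dots, x^{h_m}$ are $m$ conjugates of $x$ in $G$ generating a subgroup of order divisible by $r$ — but I still need them to be conjugates \emph{under $S$} and to generate a subgroup of $S$, since $\beta_r(x, S)$ is defined via $S$-conjugates generating a subgroup (necessarily inside $\langle x, \operatorname{Inn}(S)\rangle$, hence one should check $x$ normalizes $S$, which holds as $S \trianglelefteq G$ and $G$ is almost simple).

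The step I expect to be the main obstacle is precisely this passage from $G$-conjugates to $S$-conjugates and the bookkeeping about where everything lives. The natural fix is to intersect with $S$: replace each $x^{h_i}$ by a suitable $S$-conjugate. Concretely, since $G$ is almost simple with socle $S$, one uses that $G = S \cdot C$ for an appropriate complement-type description, or more robustly one argues as follows — write $h_i = s_i c_i$ type decompositions are not available in general, so instead I would observe that the relevant notion $\beta_r$ for almost simple $G$ with socle $S$ should be read in $\langle x, \operatorname{Inn}(S)\rangle \le \operatorname{Aut}(S)$, and that $\langle x^{h_1}, \ldots, x^{h_m}\rangle \le \langle x, \operatorname{Inn}(S)\rangle$ automatically because each $h_i$ induces an inner-or-outer automorphism of $S$ and conjugation by $h_i$ sends $x$ to another element of the coset $x\operatorname{Inn}(S)$; modulo $\operatorname{Inn}(S)$ one can adjust each $h_i$ by an element of $C_G(S) = 1$ to assume $h_i \in S$ when the outer part is trivial, and in the general almost-simple setting $x$ and its $G$-conjugates all lie in the single coset $x\operatorname{Inn}(S)$, so $S$-conjugacy already exhausts the $G$-conjugacy class of $x$ within that coset up to the action of the (abelian) outer quotient, which does not affect the generated subgroup's order by $r$. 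I would spell this out carefully, concluding $\beta_r(x, S) \le m$, which is the assertion of Lemma~\ref{leadsto}. The rest is the routine observation that "order divisible by $r$" is inherited upward along epimorphisms, which needs no computation.
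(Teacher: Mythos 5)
Your skeleton (unwind the definition of $x\leadsto K$, lift conjugating elements through the epimorphism $\varphi\colon H\to K$, and use that the order of a homomorphic image divides the order of the source) is fine, and indeed the paper offers no more than this: it states the lemma as ``immediate from the definition'' with no written proof. But the step you yourself single out as the main obstacle --- passing from conjugates of $x$ by elements $h_i\in H\leq G$ to conjugates of $x$ by elements of $S$ --- is exactly where your argument breaks, and your patch does not work. First, you misquote the definition of $\beta_r(\overline{x},L)$: it provides conjugators $l_1,\dots,l_m$ in the \emph{socle} $L$, not arbitrary elements of $K$, and this is not a pedantic point (see below). Second, the closing claims are unjustified or false in general: $G/S$ embeds in $\mathrm{Out}(S)$, which need not be abelian; $x^G$ is typically a union of several $S$-classes; lying in the coset $x\Inn(S)$ does not make an element an $S$-conjugate of $x$; and no argument is given that replacing your generators by $S$-conjugates ``up to the outer quotient'' preserves divisibility of the generated subgroup's order by $r$. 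As written, you have only produced $m$ conjugates of $x$ under $H$ (hence under $G$) generating a subgroup of order divisible by $r$, which is not the statement $\beta_r(x,S)\leq m$.

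The correct repair uses both of the points above. Take $l_1,\dots,l_m\in L$ with $r$ dividing $|\langle \overline{x}^{\,l_1},\dots,\overline{x}^{\,l_m}\rangle|$, and lift them into $H\cap S$ rather than into $H$: since $S\trianglelefteq G$, we have $H\cap S\trianglelefteq H$, so $\varphi(H\cap S)\trianglelefteq K$; every nontrivial normal subgroup of the almost simple group $K$ contains $L$ (because $C_K(L)=1$), while $\varphi(H\cap S)=1$ is impossible, since then $K$ would be an epimorphic image of $H/(H\cap S)$, a section of $G/S\leq \mathrm{Out}(S)$, which is solvable (Schreier; available here since the whole paper is modulo the classification), contradicting $L\leq K$. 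Hence $L\leq \varphi(H\cap S)$, so one can choose $s_i\in H\cap S$ with $\varphi(s_i)=l_i$. The elements $x^{s_1},\dots,x^{s_m}$ are genuine $S$-conjugates of $x$, lie in $H$, and map onto $\overline{x}^{\,l_1},\dots,\overline{x}^{\,l_m}$, so $r$ divides $|\langle x^{s_1},\dots,x^{s_m}\rangle|$ and $\beta_r(x,S)\leq m$. Note that this lifting argument genuinely needs the conjugators to come from $L$ (they must land in $\varphi(H\cap S)$), which is why your relaxation to conjugators in $K$ compounds the gap.
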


The next lemma allows to find a conjugate of the centralizer of an automorphism $x$ that is normalized, but is not centralized by~$x$.

\begin{lemma} \label{guest} {\em \cite[Lemma~15]{GuestLevy}}
Let  $G$ be a finite group and $x\in\Aut(G)$ be a $p$-element. Set  $M=C_G(x)$. Suppose that  $p$ divides $|G:M|$, and either  $M=N_G(M)$ or $Z(M) =1$. Then there exists a conjugate of $M$ that is normalized but not centralized by~$x$.
\end{lemma}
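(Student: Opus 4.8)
The plan is to convert the automorphism $x$ into a genuine group element by working in the holomorph $\widehat{G}:=G\rtimes\langle x\rangle$, where $G\trianglelefteq\widehat{G}$, $\widehat{G}/G\cong\langle x\rangle$, and $\widehat{M}:=C_{\widehat{G}}(x)=M\times\langle x\rangle$ (note that $M$ and $x$ commute and $\langle x\rangle\cap G=1$). At the outset one observes that $x\neq 1$ (otherwise $M=G$ and $p\nmid|G:M|=1$), that $|\widehat{G}:\widehat{M}|=|G:M|$, and that $x^{\widehat{G}}=x^{G}$ since both classes have size $|G:M|$. The whole problem reduces to exhibiting at least two $G$-conjugates of $M$ normalised by $x$: indeed $M$ is one such, and if a conjugate $M^{g}$ is normalised by $x$ then it is centralised by $x$ exactly when $M^{g}\leq C_{G}(x)=M$, i.e.\ when $M^{g}=M$; so a second normalised conjugate is automatically not centralised.

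The first substantial step, and the one in which the two alternative hypotheses enter, is to prove that $\widehat{M}$ is self-normalising in $\widehat{G}$. In general $N_{\widehat{G}}(M)=N_{G}(M)\langle x\rangle$ (because $N_{\widehat{G}}(M)\cap G=N_{G}(M)$ and $x\in N_{\widehat{G}}(M)$ projects to a generator of $\widehat{G}/G$), and $\widehat{M}\cap G=M$. If $M=N_{G}(M)$, then $N_{\widehat{G}}(M)=\widehat{M}$, so $N_{\widehat{G}}(\widehat{M})\leq N_{\widehat{G}}(\widehat{M}\cap G)=N_{\widehat{G}}(M)=\widehat{M}$. If instead $Z(M)=1$, then $Z(\widehat{M})=Z(M)\times\langle x\rangle=\langle x\rangle$ is characteristic in $\widehat{M}$, so $N_{\widehat{G}}(\widehat{M})\leq N_{\widehat{G}}(\langle x\rangle)$; but any $a$ normalising $\langle x\rangle$ has $a^{-1}xa\in\langle x\rangle\cap x^{\widehat{G}}=\langle x\rangle\cap x^{G}=\{x\}$, the unique element of $\langle x\rangle$ lying in the coset $xG$ (as $\langle x\rangle\cap G=1$), so $a\in C_{\widehat{G}}(x)=\widehat{M}$. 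Either way $N_{\widehat{G}}(\widehat{M})=\widehat{M}$.

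Next I would run a Sylow-type count in $\widehat{G}$. The nontrivial $p$-element $x$ acts by conjugation on the set of $\widehat{G}$-conjugates of $\widehat{M}$; every orbit has $p$-power length, so the number of fixed conjugates is congruent to $|\widehat{G}:\widehat{M}|=|G:M|\equiv 0\pmod p$, and $\widehat{M}$ itself is fixed, whence there are at least $p$ of them. Since $\widehat{M}$ is self-normalising, a conjugate $\widehat{M}^{a}$ is normalised by $x$ if and only if $x\in N_{\widehat{G}}(\widehat{M}^{a})=\widehat{M}^{a}$, i.e.\ $axa^{-1}\in\widehat{M}$; the standard fixed-point count then equates the number of $x$-invariant conjugates of $\widehat{M}$ with $|x^{\widehat{G}}\cap\widehat{M}|=|x^{G}\cap\widehat{M}|$. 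Hence there are at least $p\geq 2$ conjugates of $\widehat{M}$ normalised by $x$.

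It remains to transfer this down to $G$, and this is the point I expect to be the main obstacle. Choose a conjugate $\widehat{M}^{g}\neq\widehat{M}$ normalised by $x$; since $\widehat{G}=G\widehat{M}$ we may take $g\in G$. Then $x$ normalises the $G$-conjugate $M^{g}:=\widehat{M}^{g}\cap G$ of $M$, being the intersection of two $x$-invariant subgroups of $\widehat{G}$. The only thing to verify is $M^{g}\neq M$, equivalently $g\notin N_{\widehat{G}}(M)$. If $M=N_{G}(M)$ this is immediate, since then $N_{\widehat{G}}(M)=\widehat{M}$ while $g\notin\widehat{M}$. If $Z(M)=1$, suppose for contradiction $g\in N_{\widehat{G}}(M)=N_{G}(M)\langle x\rangle$ and write $g=nx^{i}$ with $n\in N_{G}(M)$; the invariance of $\widehat{M}^{g}$ forces $nxn^{-1}\in\widehat{M}$, hence $nxn^{-1}=\nu x$ for some $\nu\in M$. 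A short computation using that $x$ centralises $M$ and that $n$ normalises $M$ shows that $\nu$ centralises $M$ as well, so $\nu\in M\cap C_{G}(M)=Z(M)=1$; therefore $nxn^{-1}=x$, i.e.\ $n\in C_{G}(x)=M$, whence $g\in\widehat{M}$, contradicting $\widehat{M}^{g}\neq\widehat{M}$. Thus $M^{g}\neq M$, and by the first paragraph $M^{g}$ is a conjugate of $M$ normalised but not centralised by $x$, as required. It is precisely this last identity in the $Z(M)=1$ case — that a conjugate of $x$ lying in $\widehat{M}$ and carried there by an element normalising $M$ must in fact equal $x$ — that makes $Z(M)=1$ the right substitute for $M$ being self-normalising.
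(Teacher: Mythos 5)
Your argument is correct. Note, however, that the paper does not prove this lemma at all: it is quoted verbatim from Guest and Levy (\cite[Lemma~15]{GuestLevy}), so there is no in-paper proof to compare with; your write-up is in effect a self-contained reconstruction of the kind of argument used there. The three pivotal points all check out: (i) $\widehat{M}=C_{\widehat{G}}(x)=M\times\langle x\rangle$ is self-normalizing in $\widehat{G}=G\rtimes\langle x\rangle$ under either hypothesis — via $N_{\widehat{G}}(\widehat{M})\leq N_{\widehat{G}}(\widehat{M}\cap G)=N_G(M)\langle x\rangle=\widehat{M}$ when $M=N_G(M)$, and via $Z(\widehat{M})=\langle x\rangle$ together with $\langle x\rangle\cap x^{\widehat{G}}\subseteq\langle x\rangle\cap xG=\{x\}$ when $Z(M)=1$; (ii) the fixed-point count modulo $p$ on the $|G:M|\equiv 0\pmod p$ conjugates of the self-normalizing subgroup $\widehat{M}$ produces a second $x$-invariant conjugate $\widehat{M}^g$ with $g\in G$; and (iii) the descent, where in the $Z(M)=1$ case the computation showing $\nu\in M\cap C_G(M)=Z(M)=1$ (using that $x$ centralizes $M$ and $n$ normalizes $M$) correctly rules out $g\in N_{\widehat{G}}(M)$, so $M^g=\widehat{M}^g\cap G$ is normalized but, being distinct from $M=C_G(x)$, not centralized by $x$.
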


\subsection{Information on almost simple groups}

For finite simple groups we use the notations of ATLAS~\cite{Atlas}.

In Table~\ref{tab1} we collect the  information on the orders of finite simple classical groups.

\begin{table}
\caption
{Simple classical groups of Lie type over a field  $\mathbb{F}_q$}\label{tab1}
\begin{tabular}{|l|l|c|r|}\hline
$L$ & Restrictions & $|L|$& $d$  \\
\hline\hline
 \multirow{2}{*}{$A_{n-1}(q)\cong L_n(q)$}&$n\geq2$; $q>3$ for &  \multirow{2}{*}{$\frac{1}{d} q^{n(n-1)/2} \prod\limits_{i=1}^n(q^{i}-1)$}&  \multirow{2}{*}{$(n,q-1)$}\\
&  $n=2$&&\\\hline
\multirow{2}{*}{${^2A}_{n-1}(q)\cong U_n(q)$}& $n\geq3$; $q>2$   &\multirow{2}{*}{$\frac{1}{d} q^{n(n-1)/2} \prod\limits_{i=1}^n(q^{i}-(-1)^{i})$} &\multirow{2}{*}{$(n,q+1)$}\\
& for $n=3$ &&\\  \hline
$B_n(q)\cong O_{2n+1}(q)$& $n\geq3$ &  $\frac{1}{d} q^{n^2} \prod\limits_{i=1}^n(q^{2i}-1)$& $(2,q-1)$\\
\hline
\multirow{2}{*}{$C_n(q)\cong  S_{2n}(q)$}& $n\geq2$; $q>2$  for &\multirow{2}{*}{ $\frac{1}{d} q^{n^2} \prod\limits_{i=1}^n(q^{2i}-1)$} & \multirow{2}{*}{$(2,q-1)$}\\
&  $n=2$&&\\\hline
$D_n(q)\cong O^+_{2n}(q)$& $n\geq4$ &$\frac{1}{d} q^{n(n-1)}(q^n-1) \prod\limits_{i=1}^{n-1}(q^{2i}-1)$ & $(4,q^n-1)$ \\ \hline
${^2D}_n(q)\cong O_{2n}^-(q)$& $n\geq4$  &$\frac{1}{d} q^{n(n-1)}(q^n+1) \prod\limits_{i=1}^{n-1}(q^{2i}-1)$ & $(4,q^n+1)$\\ \hline
\end{tabular}
\end{table}

Our terminology for automorphisms of groups of Lie type agrees with that of  \cite{GLS}  and is different with that of \cite{Car}. We quote it here explicitly.

The definition of inner-diagonal automorphisms is the same in \cite{Car} and in \cite{GLS}, and we use this definition. In \cite[Definition 2.5.10]{GLS} subgroups $\Phi_K$ and $\Gamma_K$ of $\Aut(K)$ are defined for arbitrary group of Lie type $K$. For groups of Lie type we usually use the letter~$L$, so the corresponding subgroups we denote by $\Phi_L$ and $\Gamma_L$. We denote the group of inner-diagonal automorphisms of $L$ by  $\L$ or $\Inndiag(L)$.

\begin{lemma}\label{Aut} {\em \cite[Theorem 2.5.12]{GLS}} Let $L$ be a simple group of Lie type over a field  ${\mathbb F}_q$ of characteristic $p$. Then  $\Aut(L)$ is a split extension  of $\L$ by an abelian group  $\Phi_L\Gamma_L$. Moreover  $\Phi_L\Gamma_L\cong\Phi_L\times\Gamma_L$, except the following cases:
\begin{itemize}
\item[$(1)$] $L=B_2(q)$, $q$ is a power of  $2$ and $\Phi_L\Gamma_L$ is cyclic with  $|\Phi_L\Gamma_L:\Phi_L|=2$;
\item[$(2)$] $L=F_4(q)$, $q$ is a power of $2$ and $\Phi_L\Gamma_L$ is cyclic with $|\Phi_L\Gamma_L:\Phi_L|=2$;
\item[$(3)$] $L=G_2(q)$, $q$ is a power of $3$ and $\Phi_L\Gamma_L$ is cyclic with $|\Phi_L\Gamma_L:\Phi_L|=2$.
\end{itemize}
\end{lemma}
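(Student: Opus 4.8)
The plan is to reduce the statement to the known structure theory of automorphism groups of simple groups of Lie type, essentially by unwinding the definitions of $\Phi_L$ and $\Gamma_L$ from \cite[Definition~2.5.10]{GLS} and then invoking the classification of $\Aut(L)$ as in \cite{GLS}. First I would recall that, by definition, $\Phi_L$ is the group of field automorphisms of $L$ (induced by the Frobenius map on the field of definition $\mathbb{F}_q$, so $\Phi_L$ is cyclic of order $e$ where $q=p^e$, up to the conventions needed for twisted types) and $\Gamma_L$ is the group of graph automorphisms arising from symmetries of the Dynkin diagram. The fundamental fact — part of the standard structure theorem — is that $\Aut(L)$ is generated by $\L=\Inndiag(L)$ together with $\Phi_L$ and $\Gamma_L$, that $\L\trianglelefteq\Aut(L)$, and that $\Aut(L)/\L$ is (isomorphic to) $\Phi_L\Gamma_L$. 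So the content to be checked is (i) the extension of $\L$ by $\Phi_L\Gamma_L$ splits, (ii) $\Phi_L\Gamma_L$ is abelian, and (iii) it is the direct product $\Phi_L\times\Gamma_L$ outside the three exceptional cases listed.

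For the splitting in (i): one exhibits an explicit complement to $\L$ in $\Aut(L)$. Taking the standard Steinberg/algebraic-group realization $L=O^{p'}(\bar{G}^{\sigma})$ for a simple algebraic group $\bar G$ and a suitable endomorphism $\sigma$, the field automorphism $\phi$ (a power of the standard Frobenius) and the graph automorphism $\gamma$ (induced by a diagram automorphism of $\bar G$ fixing a chosen maximal torus and Borel) both normalize $L$ and generate a subgroup mapping isomorphically onto $\Phi_L\Gamma_L$; the intersection with $\L$ is trivial because a field or graph automorphism fixing the Steinberg presentation elementwise is the identity. This is exactly the content of \cite[Theorem~2.5.12]{GLS}, so I would simply cite it rather than reconstruct it. For (ii), abelianness of $\Phi_L\Gamma_L$ reduces to the observation that a field automorphism and a graph automorphism, chosen as above to commute with the torus and Borel data, commute with each other — and $\Phi_L$, $\Gamma_L$ are each individually cyclic (hence abelian), with the only subtlety being the twisted types, where a diagram automorphism is already absorbed into $\sigma$.

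The real point — and the only place where genuine case-analysis happens — is (iii): determining exactly when $\Phi_L\Gamma_L$ fails to split as $\Phi_L\times\Gamma_L$. The hard part will be the three exceptional families $B_2(2^a)$, $F_4(2^a)$, $G_2(3^a)$: here the Dynkin diagram has an arrow (double bond for $B_2$, $F_4$; triple bond for $G_2$) and the special isogeny in characteristic $p\in\{2,2,3\}$ gives rise to an \emph{exceptional graph automorphism} $\rho$ with the property that $\rho^2$ equals the standard field automorphism $\phi_p$ of order one step, rather than $\rho^2=1$. Consequently $\langle\rho\rangle$ is cyclic of order $2e$ (where $q=p^e$) and contains $\Phi_L=\langle\phi_p\rangle$ with index $2$, so $\Phi_L\Gamma_L=\langle\rho\rangle$ is cyclic but not a direct product — this is precisely cases (1)–(3). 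For every other simple group of Lie type, either $\Gamma_L=1$ (no diagram symmetry, e.g. types $A_1$, $B_n$/$C_n$ with $n\geq3$ untwisted in odd characteristic after accounting for standard conventions, $E_7$, $E_8$, etc., and all already-twisted types $^2A_n$, $^2D_n$, $^2E_6$, $^3D_4$, $^2B_2$, $^2G_2$, $^2F_4$), or $\Gamma_L$ is a genuine diagram automorphism of order $2$ (types $A_{n-1}$ with $n\geq3$, $D_n$ with $n\geq5$, $E_6$) or order $3$ ($D_4$) that can be chosen to commute with and be independent of $\Phi_L$, giving the direct product. I would organize this as a short table-free walk through the Dynkin diagrams, at each step citing \cite[Definition~2.5.10 and Theorem~2.5.12]{GLS} for the precise order relations between $\rho$ and $\phi_p$, and concluding that the three listed characteristic-dependent configurations are the complete list of exceptions. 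Thus the lemma follows directly from \cite[Theorem~2.5.12]{GLS}, and in the write-up I would present it as such a citation together with the identification of the exceptional cases.
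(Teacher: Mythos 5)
The paper offers no proof of this lemma at all---it is quoted directly from \cite[Theorem~2.5.12]{GLS}---and your proposal, after its expository unwinding of $\Phi_L$, $\Gamma_L$ and the exceptional graph automorphisms for $B_2(2^a)$, $F_4(2^a)$, $G_2(3^a)$, ultimately rests on exactly that same citation, so the two approaches coincide. (One small slip in your case walk-through, harmless since the statement is taken on citation anyway: for $D_4$ the graph-automorphism group $\Gamma_L$ is isomorphic to $S_3$, not cyclic of order~$3$.)
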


An automorphism of prime order $\alpha\in\Aut(L)\setminus\L$ of  an untwisted group of Lie type $L$ is called
\begin{itemize}
\item[]{\em field modulo $\L$}, if the image of $\alpha$ in $\Aut(L)/\L$ lies in $\Phi_L\L/\L$; elements of $\Phi_L$ we call {\em canonical field automorphisms} of $L$;
 \item[] {\em graph modulo} $\L$, if $L$ is not isomorphic to $B_2(2^n)$, $F_4(2^n)$, and $G_2(3^n)$ and the image of $\alpha$ in $\Aut(L)/\L$ lies in $\Gamma_L\L/\L$; elements of  $\Gamma_L$ we call  {\it canonical graph automorphisms} of $L$;
\item[] {\em graph-field modulo} $\L$ in the remaining cases; at that elements of $\Phi_L\Gamma_L\setminus\Phi_L$ for $B_2(2^n)$, $F_4(2^n)$, and $G_2(3^n)$, and elements of  $\Phi_L\Gamma_L\setminus(\Phi_L\cup\Gamma_L)$ for the remaining untwisted groups of Lie type  we call {\it canonical graph-field automorphisms} of $L$.
\end{itemize}

Let $L$ be a twisted group of Lie type, not isomorphic to a Suzuki or a Ree group, obtaining from its  untwisted analogue as a set of stable points of  an automorphism of order $d\in\{2,3\}$ (see \cite[Ch.~13]{Car}).  In this case  $\Gamma_L=1$ (see \cite[Theorem~2.5.12]{GLS}). Consider $\alpha\in\Aut(L)\setminus\L$ of prime order. We say that $\alpha$ is
\begin{itemize}
\item[]{\em field modulo $\L$}, if the order of $\alpha$ is coprime to $d$; elements of $\Phi_L$ we call  {\em canonical field automorphisms} of $L$;
 \item[] {\em graph modulo} $\L$, if the order of $\alpha$ equals  $d$; elements of $\Phi_L$ we call {\em canonical graph automorphisms} of $L$.
 \item[] there are no graph-field automorphisms of prime order modulo $\L$.
\end{itemize}

Finally, all noninner automorphisms of a Suzuki or a Ree group  $L$, are called {\em  field modulo~$\L$}.

Notice that the notion of a field and a graph-field  modulo $\L$  automorphism $\alpha$ of $L$ coincide with the notion of a field or a graph-field automorphism in  \cite[Definition~2.5.13]{GLS}.

\begin{lemma}\label{Field_Aut} {\em \cite[Proposition~4.9.1]{GLS}} Let $L={}^d\Sigma(q)$ be a simple group of Lie type over a field  ${\mathbb F}_q$, where  $\Sigma$ is an indecomposable root system,  $d$ is either an empty symbol, or   $2$ (i.e. ${}^d\Sigma(q)\ne{}^3D_4(q)$). Let  $x$  and $y$ be automorphisms of  $L$, having the same prime order. Assume also that both  $x$ and $y$ are either field or graph-field automorphisms modulo~$\L$. Then subgroups  $\langle x\rangle$ and
$\langle y\rangle$ are conjugate under~$\L$. Moreover if $x$ is a graph-field automorphism, and ${}^d\Sigma\in\{A_{n-1},D_{n}\}$, then $|x|=2$ and ${}^2\Sigma(q^{1/2})\leq C_{L}(x)\leq \widehat{{}^2\Sigma(q^{1/2})}$.
\end{lemma}

By  $\tau$ we denote the automorphism of $\GL_n(q)$, acting by
$$\tau:A\mapsto (A^{-1})^\top,$$
where $A^\top$  is the transposed of  $A$. If $q$ is a power of a prime $p$, by $\varphi_{p^k}$ we denote an automorphism of  $\GL_n(q)$ acting by
$$\varphi_{p^k}:(a_{i,j})\mapsto (a_{i,j}^{p^k}).$$
We use the same symbols $\tau$ and $\varphi_{p^k}$ for the induced automorphisms of $\mathrm{PGL}_n(q)$, $\mathrm{SL}_n(q)$, and  $L_n(q)=\mathrm{PSL}_n(q)$. In particular,  if $q=p^k$ and $r$ divides $k$, then $\varphi_{q^{1/r}}$ is a field automorphism of order~$r$. For definiteness, we always assume that  $\mathrm{PSU}_n(q)=O^{p'}\left(C_{\mathrm{PGL}_n(q^2)}\left(\tau\varphi_{q}\right)\right)$. As usual, we use the notations $L_n^\varepsilon(q)=\mathrm{PSL}_n^\varepsilon(q)$, $\varepsilon=\pm$ for linear and unitary groups, assuming   $L_n^+(q)=\mathrm{PSL}_n^+(q)=\mathrm{PSL}_n(q)$ and $L_n^-(q)=\mathrm{PSL}_n^{-}(q)=\mathrm{PSU}_n(q)$. By  $E_k$ we denote the identity  $(k\times k)$-matrix and by $A\otimes B$ the Kronecker product of matrices $A$ and~$B$.

\begin{lemma}\label{GraphAutGLU}
Let  $L=L_n^\varepsilon(q)$ be a simple projective special linear or unitary group and  $n\geq 5$. Then the following statements hold:
\begin{itemize}
\item[{\em (1)}] If $n$ is odd, then the coset  $\L\tau$ of $\langle\L, \tau\rangle$ contains exactly one class of conjugate involutions, and every such involution normalizes, but not centralizes a subgroup  $H$ of $S$  such that  $H\cong O_n(q)$.
\item[{\em (2)}] If $n$ is even and  $q$ is odd, then the coset  $\L\tau$ of $\langle\L, \tau\rangle$ contains exactly three classes of conjugate involutions with representatives  $x_0,x_+,x_-$ such that $x_\delta$ normalizes but does not centralize  $H_\delta$,  where  $\delta\in \{0,+,-\}$
    $$
    H_\delta\cong\left\{
    \begin{array}{cc}
      S_n(q), &\text{if } \delta=0, \\
      O^+_n(q), &\text{if } \delta=+, \\
      O^-_n(q), &\text{if } \delta=-.
    \end{array}
    \right.
    $$
\item[{\em (3)}] If both  $n$ and $q$ are even, then the coset $\L\tau$ of $\langle\L, \tau\rangle$ contains exactly two classes of conjugate involutions, and every such involution normalizes, but not centralizes a subgroup isomorphic to~$S_n(q)$.
\end{itemize}
\end{lemma}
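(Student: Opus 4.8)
The plan is to reduce the statement to a computation with $n\times n$ matrices, to classify the involutions in the coset $\L\tau$ by means of bilinear forms, to identify their centralizers with classical subgroups of $L$, and then to pass from each centralizer to a suitable conjugate by the Guest--Levy lemma (Lemma~\ref{guest}). For $\varepsilon=+$ the automorphism $\tau\colon A\mapsto(A^{-1})^{\top}$ is the canonical graph automorphism of $L=L_n(q)$ and $\L=\mathrm{PGL}_n(q)$. For $\varepsilon=-$ I would first note, from the convention $\PSU_n(q)=O^{p'}(C_{\mathrm{PGL}_n(q^2)}(\tau\varphi_q))$ together with $\varphi_q^2=1$ on $\mathbb F_{q^2}$, that $\tau$ restricts on $\PSU_n(q)$ to the order-$2$ field automorphism $\varphi_q$, so that $\langle\L,\tau\rangle=\langle\PGU_n(q),\varphi_q\rangle$; the analysis below then runs in parallel via Galois descent (Lang--Steinberg), with ``(skew-)symmetric bilinear form on $\mathbb F_q^{n}$'' replaced by the corresponding $\mathbb F_q$-structure on the unitary space, and I will not repeat it.

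\emph{Classification of the involutions.} Working in $\GL_n(q)$, an element $x=g\tau$ induces an involution of $\langle\L,\tau\rangle$ exactly when $(g\tau)^2=g(g^{\top})^{-1}$ is a scalar matrix; squaring once more forces this scalar to be $\pm E_n$, so $g$ is either symmetric, or --- only when $q$ is odd --- skew-symmetric, the latter with $(g\tau)^2=-E_n$ already trivial modulo $\L$. Since $h(g\tau)h^{-1}=(hgh^{\top})\tau$ and scalars act trivially in $\L$, two such $x$ are $\L$-conjugate precisely when the forms $g$ are proportional, and one checks that $\tau$ fixes each such $\L$-class, so these are also the $\langle\L,\tau\rangle$-classes. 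Classifying non-degenerate symmetric and skew forms on $\mathbb F_q^n$ up to proportionality now gives: one class if $n$ is odd (all non-degenerate symmetric forms are proportional, and there is no non-degenerate skew form); three classes if $n$ is even and $q$ is odd (the two orthogonal types --- which proportionality cannot merge, since $n$ is even and the square class of the discriminant is then an invariant --- together with the unique skew type); and two classes if $n$ and $q$ are both even (the unique non-degenerate alternating form and the unique non-degenerate non-alternating symmetric form). This yields the stated numbers of conjugacy classes.

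\emph{Centralizers and the normalized subgroup.} For a representative $x=g\tau$ the matrix centralizer $C_{\GL_n(q)}(x)=\{h\mid hgh^{\top}\in\langle g\rangle\}$ is the similarity group of the form $g$; intersecting with $\SL_n(q)$ and passing to $L$ identifies $C_L(x)$, for the appropriate reading of these symbols, with the orthogonal group $O_n(q)$ when $n$ is odd, with $O_n^+(q)$ or $O_n^-(q)$ for the two symmetric types when $n$ is even and $q$ is odd, and with the symplectic group $S_n(q)$ for the skew type (or, in even characteristic, the alternating type). In each of these cases I would then apply Lemma~\ref{guest} with $G=L$, $p=2$ and $M=C_L(x)$: for $n\geq 5$ the index $|L:M|$ is even (compare orders) and $M$ is centreless or self-normalizing, so some conjugate of $M$ --- still of the stated isomorphism type --- is normalized but not centralized by $x$. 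The one remaining case is $n,q$ even with $g$ non-alternating symmetric, where $C_L(x)$ is not a symplectic group; here I would argue directly, taking $H$ to be the symplectic group $S_n(q)$ attached to a non-degenerate alternating matrix $J$ and choosing the representative $x=g\tau$ of the non-alternating class so that $gJ\in\mathrm{GSp}_n(q)$ is non-central (for instance $g=E_n$, $gJ=J$), whence $x$ acts on $H$ by conjugation by $gJ$ and therefore normalizes but does not centralize $H\cong S_n(q)$. The hypothesis $n\geq 5$ is used throughout to exclude the exceptional isomorphisms of low-dimensional classical groups and the irregular behaviour of $\mathrm{Out}(L)$ for $n\leq 4$.

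\emph{Main obstacle.} The principal difficulty is the characteristic-$2$ analysis: distinguishing alternating from non-alternating symmetric forms, recognizing that the centralizer of the non-alternating involution is not itself a classical group, and producing the required copy of $S_n(q)$ by hand. A secondary difficulty is making the unitary case $\varepsilon=-$ fully rigorous, since there the relevant coset consists of field automorphisms and the dictionary between involutions and $\mathbb F_q$-forms must be established by Galois descent rather than by a bare matrix computation.
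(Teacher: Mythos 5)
Your overall strategy coincides with the paper's at the decisive point: both proofs take the centralizer of the graph involution (an orthogonal or symplectic subgroup of the right type, with trivial center and even index) and invoke the Guest--Levy lemma (Lemma~\ref{guest}) to produce a conjugate of it that is normalized but not centralized. Where you differ is in how the class/centralizer data are obtained: the paper simply cites \cite[Theorem~4.5.1 and Table~4.5.1]{GLS} for odd $q$ and \cite[(19.8)]{AsSeitz} for even $q$, whereas you re-derive the classification by hand, identifying involutions in $\L\tau$ with nondegenerate symmetric or alternating forms $g$ up to congruence and proportionality. That computation is essentially correct in the linear case (including the count of classes and the discriminant argument for $n$ even, $q$ odd), and your treatment of the non-alternating class in characteristic $2$ -- letting the representative act on $S_n(q)=\mathrm{Sp}(J)$ as conjugation by the noncentral element $J$ -- is a legitimate alternative to the paper's device, which instead takes the alternating representative $\tau I$, computes $C_L(\tau I)\cong S_n(q)$, and represents the second class as $\tau I t$ with $t$ a transvection in $C_L(\tau I)$, so that it visibly normalizes but does not centralize that copy of $S_n(q)$. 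A minor point both treatments gloss is that $C_L(x)$ is only an extension whose socle is the stated group $H$, so one should pass from the conjugate of the centralizer to its socle and note that $x$ cannot centralize it.

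The genuine weak spot is the unitary case. Half of the lemma concerns $L=U_n(q)$, where the coset $\L\tau$ consists of the graph (field-induced) involutions of $\PGU_n(q)$, and you dispose of it in one sentence by appealing to ``Galois descent (Lang--Steinberg)'' without carrying out the classification of $\PGU_n(q)$-classes of such involutions or the identification of their fixed-point subgroups with $O_n(q)$, $S_n(q)$, $O_n^\pm(q)$ over $\mathbb{F}_q$; this is precisely the part where the bare matrix calculus no longer suffices and where the paper's citations do the work uniformly for both $\varepsilon=\pm$. As written, the proposal proves the linear case and only gestures at the unitary one, so to be complete you would either have to execute the descent argument in detail or, as the paper does, fall back on \cite[Table~4.5.1]{GLS} and \cite[(19.8)]{AsSeitz}.
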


\begin{proof}[Proof]
The statement on the centralizers and the number of $\L$-conjugacy classes of involutions for  $q$ odd follows by \cite[Theorem~4.5.1 with Table~4.5.1]{GLS}, while for $q$ even by \cite[(19.8)]{AsSeitz}. If $n$ is odd, then the socle of the centralizer in $\L$ of a graph involution is isomorphic to $H$, while for $n$ even and $q$ odd the socle of the centralizer in $\L$ is isomorphic to $H_\delta$, in particular, the centralizers of  $x_0$, $x_+$, and~$x_-$ are pairwise nonisomorphic. Also the centers of centralizers are trivial. Since for $n\geq 5$ these centralizers have even indices, by Lemma~\ref{guest} every such involution normalizes, but not centralizes a subgroup conjugate to its centralizer. Thus we obtain statements (1) and~(2) of the lemma.

Now assume that  both $n$ and $q$ are even.
In this case we can take  $\tau I$ as a representative of one of the conjugacy classes of involutions, where $I$ is the projective image of a block-diagonal matrix with blocks $\left(\begin{array}{rr}0&1\\1&0 \end{array}\right)$ on the diagonal, i.~e.
$$
I=\left(
\begin{array}{rr}
  0 & 1 \\
  1 & 0
\end{array}
\right)\otimes E_{n/2}.
$$
It is easy to see that  $C_L(x)\cong S_n(q)$ (if  $L$ is linear, direct computations show that  $C_L(x)$ consists exactly of projective matrices  $A$ satisfying $A^\top IA=I$, and these matrices form  $S_n(q)$; if  $L$ is unitary, see   \cite[(19.8)]{AsSeitz}). Since the index  $\vert L:C_L(x)\vert$ is even and $Z(C_L(x))=1$, by Lemma~\ref{guest} there exists a subgroup  $M$ of  $S$, isomoprphic to  $C_L(x)$, such that  $x$ centralizes, but not normlizes $M$. In order to obtain the representative  $y$ of the second conjugacy class, it is enough to take any transvection  $t\in C_L(x)$ and set $y=\tau I t$. By construction,  $y$ normalizes but not centalizess $C_L(x)\cong S_n(q)$.
\end{proof}

\begin{lemma}\label{Graph_Inv_D_n} Let $V$ be a vector space over a field $\mathbb{F}_q$ of odd order $q$,  $\dim V=2n$  for $n\geq 5$, and   $V$ is equipped with a nondegenerate symmetric bilinear form of the sign $\varepsilon\in\{+,-\}$. Let  ${\rm O}$ be the full group of isometries and   $\Delta$ be the full group of similarities of $V$, ${\rm SO}$ be the subgroup of  ${\rm O}$ consisting of elements with determinant  $1$,  $\Omega={\rm O}'$. Denote by $$\overline{\phantom{x}}:\Delta\rightarrow\Delta/Z(\Delta)$$
the canonical epimorphism. Let $L=\overline{\Omega}=O_{2n}^\varepsilon(q)$. Then the following statements hold.

\begin{itemize}
  \item[$(1)$] A canonical graph automorphism  $\overline{\gamma}$ of $L$ is contained in  $\overline{{\rm O}}\setminus \overline{{\rm SO}}$, while  $\overline{\Delta}$ coincides with $\langle\L,\overline{\gamma}\rangle$.
  \item[$(2)$] All graph modulo $\L$ involutions are images of involutions from  $\Delta$.
  \item[$(3)$] If  $n$ is even, then there are  $n/2$ classes of $\L$-conjugate graph modulo $\L$ involutions with representatives  $\overline{\gamma}_1=\overline{\gamma}, \overline{\gamma}_2,\dots,\overline{\gamma}_{n/2}$, where $\gamma_i$ for  $i=1,\dots, n/2$ is an involution in ${\rm O}$ such that the eigenvalue $-1$ of $\gamma_i$ has multiplicity ${2i-1}$. Every $\overline{\gamma}_i$ normalizes but does not centralize a subgroup of  $L$ isomorphic to $O_{2n-1}(q)$.

  \item[$(4)$] If $n$ is odd, then there exist  $(n+3)/2$  classes of  $\L$-conjugate graph modulo $\L$ involutions with representatives $\overline{\gamma}_1=\overline{\gamma}, \overline{\gamma}_2,\dots,\overline{\gamma}_{(n+1)/2}$ and $\overline{\gamma}_{(n+1)/2}'$, where for $i=1,\dots, (n+1)/2$ $\overline{\gamma}_i$ is an involution from  ${\rm O}$ such that the eigenvalue  $-1$ of $\overline{\gamma}_i$ has multiplicity  ${2i-1}$, while  $\overline{\gamma}'_{(n+1)/2}$ is an involution from  $\Delta\setminus {\rm O}$ such that its eigenvalue  $-1$ has multiplicity $n$. Every  $\overline{\gamma}_i$ normalizes but does  not centralize a subgroup of $L$ isomorphic to $O_{2n-1}(q)$, while $\overline{\gamma}_{(n+1)/2}'$ normalizes but not centralizes a subgroup of  $L$ isomorphic to~$O_{n}(q^2)$.
\end{itemize}
\end{lemma}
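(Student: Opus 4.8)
The plan is to work inside the classical groups $\mathrm{O}$, $\mathrm{SO}$, $\Omega$, $\Delta$ directly and then push everything down through $\overline{\phantom{x}}$, using the structure of $\Aut(L)$ from Lemma~\ref{Aut} together with the detailed conjugacy information for involutions in orthogonal groups from \cite[Theorem~4.5.1 with Table~4.5.1]{GLS}. For part~(1), I would recall that for $L=D_n^\varepsilon(q)$ with $q$ odd the graph automorphism of order~$2$ is realized by conjugation by an element of $\mathrm{O}\setminus\mathrm{SO}$ (a reflection), and that $\mathrm{O}$ induces on $L$ exactly the group $\L$ extended by this graph automorphism; since diagonal automorphisms come from $\Delta/\Omega$ and $|\Delta:\mathrm{O}\Omega|$ accounts for the remaining part, one gets $\overline{\Delta}=\langle\L,\overline\gamma\rangle$ and $\overline\gamma\notin\overline{\mathrm{SO}}$. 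Part~(2) is then a counting/index argument: every coset of $\L$ in $\langle\L,\overline\gamma\rangle$ outside $\L$ meets $\overline{\mathrm{O}}$ (and $\overline{\Delta}$), so any graph modulo $\L$ involution is $\L$-conjugate to — hence equal to, up to the ambient action — the image of an honest involution of $\Delta$; here one must be slightly careful that an involution in the coset $\L\overline\gamma$ need not lift to an involution in $\Delta$ a priori, so I would argue via the eigenvalue decomposition of a preimage in $\Delta$ and adjust by a central/diagonal element to get an actual involution, which is exactly where the similarity group $\Delta$ (rather than just $\mathrm{O}$) becomes necessary in case~(4).

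For parts~(3) and~(4), the combinatorial heart is the classification of involutions in $\mathrm{O}_{2n}^\varepsilon(q)$ up to conjugacy by their $(-1)$-eigenspace: an involution $t\in\mathrm{O}$ is determined up to $\mathrm{O}$-conjugacy by the dimension and type of its $(-1)$-eigenspace, and $t$ projects to a \emph{graph} modulo $\L$ involution precisely when $\det t=-1$, i.e. when the $(-1)$-eigenspace has odd dimension $2i-1$. Counting the possible odd dimensions from $1$ up to $2n-1$, subject to the constraint that the $(-1)$-eigenspace and its complement carry nondegenerate forms whose types multiply to $\varepsilon$, gives the stated count: when $n$ is even one gets the $n/2$ values $1,3,\dots,n-1$ each yielding a single $\L$-class $\overline\gamma_i$; when $n$ is odd the middle dimension $n$ splits into two $\L$-classes, one coming from an involution in $\mathrm{O}$ and one from an involution in $\Delta\setminus\mathrm{O}$ (the ``twisted'' reflection whose $(-1)$-eigenspace of dimension $n$ fails to be split inside $\mathrm{O}$ but becomes available after scaling the form), accounting for the extra representative $\overline\gamma'_{(n+1)/2}$ and the total $(n+3)/2$. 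The centralizer statements then follow by computing $C_L(\overline\gamma_i)$: an involution with $(2i-1)$-dimensional $(-1)$-eigenspace centralizes (modulo scalars) a subgroup $\mathrm{O}_{2i-1}(q)\times\mathrm{O}_{2n-2i+1}^{\varepsilon'}(q)$ inside $\mathrm{O}$, whose derived/simple part contains a copy of $O_{2n-1}(q)$; after passing to $L$ and noting that the center of this centralizer is trivial and its index in $L$ is even for $n\geq 5$, Lemma~\ref{guest} supplies a conjugate of this subgroup that is normalized but not centralized by $\overline\gamma_i$. For $\overline\gamma'_{(n+1)/2}$ one instead identifies its centralizer with the similitude/isometry group of an $n$-dimensional space over $\mathbb{F}_{q^2}$ — this is the standard phenomenon that a ``minus-type'' involutive outer datum in a split orthogonal group has centralizer a unitary-like/orthogonal group over the quadratic extension — yielding the subgroup $O_n(q^2)$, and again Lemma~\ref{guest} applies since the relevant index is even and the center is trivial.

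The main obstacle I anticipate is twofold. First, bookkeeping the $\L$-classes rather than $\mathrm{O}$- or $\Delta$-classes: $\L=\Inndiag(L)$ is strictly larger than $\Inn(L)\cdot(\mathrm{SO}/\Omega)$ only in a controlled way, and one must check that no two of the listed $\overline\gamma_i$ fuse under a diagonal automorphism and that the $\Delta$-class and the $\mathrm{O}$-class with the same $(-1)$-eigenspace dimension $n$ really stay distinct after projection (this uses that $\Delta$ normalizes $\mathrm{O}$ but $\Delta/\mathrm{O}\Omega$ does not move the type of the eigenspace in the way needed). Second, verifying the centralizer identifications precisely enough — in particular the type $\varepsilon'$ of the complementary summand and the claim that the simple section contains $O_{2n-1}(q)$ — requires care with the small-rank and $q$-parity conventions, though for $n\geq 5$ these are all in the stable range and cause no genuine trouble. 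I would rely on \cite[Theorem~4.5.1 and Table~4.5.1]{GLS} to shortcut most of this, citing it for the classification of involutions and their centralizers and reserving direct computation only for the passage to $\overline{\phantom{x}}$ and the application of Lemma~\ref{guest}.
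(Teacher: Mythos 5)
Your treatment of parts (1), (2) and of the counting of the $\L$-classes in (3), (4) is fine and is essentially what the paper does (it simply quotes \cite[Theorems~4.3.1 and 4.3.3, Tables~4.3.1 and 4.3.3, Remark~4.5.4]{GLS} for all of that); your handling of $\overline{\gamma}'_{(n+1)/2}$ via its centralizer of type $O_n(q^2)$, trivial center, even index and Lemma~\ref{guest} also matches the paper exactly. The genuine gap is in your argument for the involutions $\overline{\gamma}_i$ with $i\geq 2$. You claim that the centralizer of $\gamma_i$, which is (the projective image of) $\mathrm{O}(V_-)\times\mathrm{O}(V_+)\cong \mathrm{O}_{2i-1}(q)\times\mathrm{O}_{2n-2i+1}(q)$ (both eigenspaces are odd-dimensional, so there is no type $\varepsilon'$ to speak of), has derived/simple part containing a copy of $O_{2n-1}(q)$. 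This is false for $i\geq 2$: every element of the centralizer preserves both eigenspaces, and a comparison of orders (or simply of the dimensions involved) shows that $\mathrm{O}_{2i-1}(q)\times\mathrm{O}_{2n-2i+1}(q)$ cannot contain $O_{2n-1}(q)$ unless $i=1$. Moreover, even if such a copy existed, Lemma~\ref{guest} only produces a conjugate of the centralizer $M=C_G(x)$ itself that is normalized but not centralized; it gives no control over proper subgroups of $M$, so this route would at best yield a normalized-but-not-centralized subgroup isomorphic to the product of the two smaller orthogonal groups, not the $O_{2n-1}(q)$ demanded by the statement.

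The paper closes exactly this point by a direct geometric construction that avoids Lemma~\ref{guest} for the $\gamma_i$: since $\gamma_i$ is an isometry, $V=V_+\oplus V_-$ with $V_\pm$ nondegenerate and orthogonal, and $\dim V_+=2n-2i+1>1$; choosing a nonsingular vector $u\in V_+$, the hyperplane $W=u^\perp$ is nondegenerate and $\gamma_i$-invariant, and it contains eigenvectors for both $+1$ and $-1$, so $\gamma_i$ acts nonscalarly on $W$ and therefore normalizes but does not centralize $I(W)'\cong O_{2n-1}(q)$. You need an argument of this kind (an invariant nondegenerate hyperplane with nonscalar action) to reach the subgroup $O_{2n-1}(q)$; the centralizer-plus-Lemma~\ref{guest} route you propose cannot deliver it, and Lemma~\ref{guest} should be reserved (as in the paper) for the single representative $\overline{\gamma}'_{(n+1)/2}$.
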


\begin{proof} All statements of the lemma, except the existence of subgroups normalized but not centralized by corresponding involutions follow by \cite[Theorems~4.3.1 and 4.3.3, Tables~ 4.3.1 and 4.3.3, and Remark~4.5.4]{GLS}.

We show that for  the involutions $\gamma_i$ there exists a nondegenerate invariant subspace $U$ of $V$ of $\dim U=2n-1$ with nonscalar action. Since  $\gamma_i$ is an isometry, the subspaces  $V_+$ and $V_-$, consisting of eigenvectors corresponding to eigenvalues  $1$ and  $-1$ respectively, are orthogonal and  $V= V_+\oplus V_-$. So  $V_+$ and $V_-$ are nondegenerate  $\gamma_i$-stable subspaces, and  $\gamma_i$ acts on both of them as a scalar multiplication. Let  $u\in V_+$ be a nonsingular vector. Then the subspace  $W=u^\perp$ is  $\gamma_i$-stable. Since  $\dim V_+=2n-2i+1>1$, the restriction of $\gamma_i$ on $W$ has two eigenvalues $1$ and~$-1$, and so its action on $W$ is nonscalar. Hence  $\overline{\gamma}_i$ normalizes but not centralizes the derived subgroup $I(W)'\cong O_{2n-1}(q)$ of the group of all isometries of~$W$.

By~\cite[Table~4.5.1]{GLS}  the centralizer of  $\overline{\gamma}_{(n+1)/2}'$ in $\L$ is isomorphic to the group of inner-diagonal automorphisms of $O_{n}(q^2)$. The centralizer has a trivial center and even index in~$\L$. By Lemma~\ref{guest} we conclude that $\overline{\gamma}_{(n+1)/2}'$ normalizes but not centralizes a subgroup of $L$ isomorphic to~$O_{n}(q^2)$.
\end{proof}

\begin{lemma}\label{Graph_Inv_D_n_even_q}  Assume  $L=D_n(q)=O_{2n}^\varepsilon (q)$ for $\varepsilon\in\{+,-\}$, $n\geq 4$, and even~$q$. Then every graph modulo  $\L$ involution of $\Aut(L)$  normalizes a subgroup of $L$ isomorphic to $O_{2n-2}^\eta(q)\times O_2^{\varepsilon\eta}(q)$ for $\eta\in\{+,-\}$ and does not centralizes the component $O_{2n-2}^\eta(q)$ in this product.
\end{lemma}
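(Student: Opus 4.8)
The plan is to realise each graph modulo $\L$ involution geometrically, as conjugation by an involution of the full orthogonal group lying outside $L$, and then to localise its action on the natural module of $L$. Let $(V,Q)$ be the natural $\mathbb F_q$-module of $L$, so $\dim V=2n$ and $Q$ has type $\varepsilon$; let $B$ be the associated bilinear form, which is alternating since $q$ is even; let $\mathrm{GO}(V)$ be the full group of isometries of $(V,Q)$, put $\Omega(V)=\mathrm{GO}(V)'$, and let $\theta\colon\mathrm{GO}(V)\to\mathbb Z/2\mathbb Z$ be the Dickson invariant; use the analogous notation for nondegenerate subspaces. Because $q$ is even, $\gcd(4,q^n\mp1)=1$, so $\L=\Inn(L)=\Omega(V)=\Omega_{2n}^\varepsilon(q)=L$, while conjugation realises $\mathrm{GO}(V)$ in $\Aut(L)$ as an extension of $L$ by a graph modulo $\L$ automorphism of order $2$. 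Invoking \cite[Theorem~4.5.1 and Table~4.5.1]{GLS} and \cite[(19.8)]{AsSeitz} (as in the proofs of Lemmas~\ref{GraphAutGLU} and~\ref{Graph_Inv_D_n}), and, when $L=D_4(q)$, using in addition that the graph involutions in the three cosets $L\gamma$, $\gamma$ an involution of $\Gamma_L\cong S_3$, are $\Aut(L)$-conjugate, every graph modulo $\L$ involution of $\Aut(L)$ is, up to $\Aut(L)$-conjugacy, induced by conjugation by an involution $g\in\mathrm{GO}(V)\setminus\Omega(V)$. As the conclusion of the lemma is invariant under $\Aut(L)$-conjugacy, it suffices to prove: \emph{every involution $g\in\mathrm{GO}(V)\setminus\Omega(V)$ stabilises an orthogonal decomposition $V=U\perp W$ with $U,W$ nondegenerate, $\dim U=2n-2$, $\dim W=2$, and $g|_U\neq1$.} Granting this, set $M=\Omega(U)\times\Omega(W)$: additivity of $\theta$ on orthogonal sums gives $M\leq\Omega(V)=L$, and $M\cong O_{2n-2}^\eta(q)\times O_2^{\varepsilon\eta}(q)$ where $\eta$ is the type of $U$ (so $W$ has type $\varepsilon\eta$); $g$ normalises $M$ since it normalises $\Omega(U)\trianglelefteq\mathrm{GO}(U)$ and $\Omega(W)\trianglelefteq\mathrm{GO}(W)$; and $g$ does not centralise the factor $\Omega(U)=O_{2n-2}^\eta(q)$, because $C_{\mathrm{GO}(U)}(\Omega(U))=Z(\mathrm{GO}(U))=1$ (as $q$ is even and $\dim U=2n-2\geq6$) whereas $g|_U\neq1$.

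The decisive point will be the claim that $\mathrm{Im}(g-1)$ is \emph{not} totally singular. Since $q$ is even and $g^2=1$ we have $(g-1)^2=0$, so $\mathrm{Im}(g-1)\subseteq C_V(g)$, and $C_V(g)=\mathrm{Im}(g-1)^\perp$ because $g$ is an isometry; thus $\mathrm{Im}(g-1)$ is totally isotropic for $B$ and $\theta(g)\equiv d\pmod2$ with $d=\dim\mathrm{Im}(g-1)$, whence $d$ is odd as $g\notin\Omega(V)$. If $\mathrm{Im}(g-1)$ were also totally singular, then, choosing a complement $C$ with $V=\mathrm{Im}(g-1)^\perp\oplus C$, the map $\rho=(g-1)|_C\colon C\to\mathrm{Im}(g-1)$ would be an isomorphism and $B$ would restrict to a perfect pairing $C\times\mathrm{Im}(g-1)\to\mathbb F_q$; a one-line computation (using that $g$ is trivial on $\mathrm{Im}(g-1)^\perp$ and that $Q$ vanishes on $\mathrm{Im}(g-1)$) shows that $Q$-invariance of $g$ forces $B(c,\rho(c))=0$ for all $c\in C$, i.e.\ the nondegenerate form $(c_1,c_2)\mapsto B(c_1,\rho(c_2))$ on $C$ is alternating, so $\dim C=d$ is even --- a contradiction. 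Hence there is $v\in V$ with $Q\big((g-1)v\big)\neq0$; since $Q\big((g-1)v\big)=Q(v)+Q(gv)+B(v,gv)=B(v,gv)$, this $v$ satisfies $B(v,gv)\neq0$.

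I would then finish by distinguishing the cases $d\geq3$ and $d=1$. If $d\geq3$, put $W=\langle v,gv\rangle$ for $v$ as above: it is a $g$-invariant nondegenerate $2$-space (the Gram matrix of $B$ on $v,gv$ is anti-diagonal with entry $B(v,gv)\neq0$), and $\mathrm{Im}(g-1)\not\subseteq W$ since $d>2$; as $C_V(g)=\mathrm{Im}(g-1)^\perp$, one has $g|_{W^\perp}=1$ iff $W^\perp\subseteq C_V(g)$ iff $\mathrm{Im}(g-1)\subseteq W$, which fails, so $g|_{W^\perp}\neq1$; take $U=W^\perp$. If $d=1$, a short computation (or the classification of involutions in characteristic $2$) shows $g$ is an orthogonal transvection $t_w$ with $Q(w)\neq0$; then $C_V(g)=w^\perp$ is a degenerate hyperplane with $\mathrm{rad}(B|_{w^\perp})=\langle w\rangle$, so every complement of $\langle w\rangle$ in $w^\perp$ is a nondegenerate $(2n-2)$-dimensional quadratic space and therefore contains a nondegenerate $2$-space $W$; then $g|_W=1$ (as $W\subseteq w^\perp$), $W$ is $g$-invariant, $U=W^\perp$ is nondegenerate of dimension $2n-2$ and $g$-invariant and contains $w$, and $g|_U=t_w|_U\neq1$ because $w\notin W$ (the subspace $W\subseteq w^\perp$ being nondegenerate while $w$ spans $\mathrm{rad}(B|_{w^\perp})$) forces $U\not\subseteq w^\perp$. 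In either case $U$ and $W$ are as required.

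The main obstacle is the totally-singular claim --- equivalently, the absence of unipotent involutions of Aschbacher--Seitz type $a_l$ with $l$ odd in $\mathrm{GO}_{2n}^\varepsilon(q)$ for even $q$ --- together with the standard but somewhat fiddly bookkeeping matching the $\L$-classes of graph modulo $\L$ involutions with the involution classes of $\mathrm{GO}_{2n}^\varepsilon(q)\setminus\Omega_{2n}^\varepsilon(q)$, in particular for the twisted groups ${}^2D_n(q)$ and for the triality graph involutions when $L=D_4(q)$; every remaining step is elementary bilinear algebra over $\mathbb F_q$ in characteristic $2$.
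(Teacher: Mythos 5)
Your proposal is correct, and its skeleton coincides with the paper's: identify $L$ with $\Omega(V)$ (possible since $q$ is even), realize every graph modulo $\L$ involution, up to conjugacy, as an involution $g\in{\rm O}(V)\setminus\Omega(V)$, use that ${\rm rank}(g-1)$ is odd, and extract a $g$-invariant orthogonal decomposition $V=U\perp W$ with $\dim W=2$, taking $\Omega(U)\times\Omega(W)\leq\Omega(V)$ as the normalized subgroup. The difference lies in how the decomposition and the noncentralizing condition are obtained. The paper quotes Aschbacher--Seitz: (7.6) and (8.10) for the odd rank, and (7.5)(1) for the existence of an invariant nondegenerate $2$-space $Y$; it then sidesteps the question of whether $t$ acts nontrivially on $Y^\perp$ by a dichotomy (if $t$ centralizes $\Omega(Y^\perp)$ it is trivial on $Y^\perp$, and one re-chooses a nondegenerate $2$-space $U\subseteq Y^\perp$ so that $t$ is nontrivial on $U^\perp$). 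You instead prove the needed geometric input from scratch: the nice observation that ${\rm Im}(g-1)$ cannot be totally singular when its dimension is odd (your alternating-form argument is sound, and it amounts to excluding Aschbacher--Seitz involutions of type $a_l$ with $l$ odd), which lets you build $W=\langle v,gv\rangle$ directly with $g$ nontrivial on $W^\perp$, the transvection case $d=1$ being handled separately; this makes the proof self-contained at the cost of some case analysis, and you are also more explicit than the paper about the conjugacy bookkeeping for ${}^2D_n(q)$ and for triality when $n=4$ (the paper subsumes this in the cited ``well-known fact'' that $t\in{\rm O}(V)$). Two small remarks: the references you invoke for that realization step are not quite the right ones ([GLS, Table~4.5.1] concerns odd characteristic; the paper cites Bray--Holt--Roney-Dougal and Kleidman--Liebeck instead), and the equality $C_{{\rm GO}(U)}(\Omega(U))=1$ deserves a word (absolute irreducibility of the natural module plus $q$ even), though the paper's own proof is no more explicit on this last point.
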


\begin{proof} Denote by $V$ a vector space of dimension $2n$ over $\mathbb{F}_q$ equipped with a nondegenerate quadratic form of sign~$\varepsilon$. We identify $L$ and  $\Omega(V)={\rm O}(V)'$, where ${\rm O}(V)$ is the isometry group of~$V$.
 Let $t$ be a graph modulo $\L$ involution of $\Aut(L)$. It is a well-known fact that $t$ belongs to ${\rm O}(V)$ (see, for example, \cite[p.~34--35]{Bray} and \cite[\S~2.7--2.8]{KL}). 
 In view of \cite[(7.6) and (8.10)]{AsSeitz}, since $t\not \in \Omega(V)$, the rank of $t-1$ is odd. Now by \cite[(7.5)(1)]{AsSeitz} we obtain that $t$ stabilizes a decomposition $V=Y\oplus Y^\perp$, where $Y$ is nondegenerate of dimension~$2$. Therefore $t$ normalizes $\Omega(Y)\times \Omega(Y^\perp)$. If $t$ does not centralize $\Omega(Y^\perp)$, we obtain the lemma. If $t$ centralizes $\Omega(Y^\perp)$, then $t$ acts identically on $Y^\perp$, i.~e. $Y^\perp$ consists of $t$-stable vectors. In this case we can take any nondegenerate $2$-dimensional subspace $U$ of $Y^\perp$. By construction, $t$ stabilizes $U\oplus U^\perp$ and acts nontrivially on $U^\perp$. Hence $t$ normalizes $\Omega(U)\times\Omega(U^\perp)$ and does not centralizes~$\Omega(U^\perp)$.
\end{proof}

\begin{lemma}\label{alpha_A_n} {\em \cite[Lemma~6.1]{GS}} Assume $L=A_n$, where $n\geq 5$ is a simple alternating group. Then  $\alpha(x,L)\leq n-1$ for any nonidentity element  $x\in\Aut(L)$. Moreover if  $n\ne 6$ and $x$ is not a transposition, then~$\alpha(x,L)\leq n/2$.
\end{lemma}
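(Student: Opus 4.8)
The plan is to work inside $\Sym(n)$, construct conjugates of $x$ explicitly, and exploit the fact that a transitive subgroup of $\Sym(n)$ which is generated by elements supported on ``small'' sets and which contains a suitable short-support element must be all of $\Sym(n)$ or $A_n$; combined with primitivity criteria this forces the generated group to be $\langle x,\Inn(L)\rangle = A_n$ or $S_n$ (whichever $\langle x,\Inn(L)\rangle$ is). I will treat the two claims separately: first the uniform bound $\alpha(x)\le n-1$, then the sharper bound $\alpha(x)\le n/2$ when $n\ne 6$ and $x$ is not a transposition.

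First I would dispose of the transposition case, which is where the bound $n-1$ is attained: if $x=(1\,2)$ then the conjugates $(1\,2),(2\,3),\dots,(n-1\,n)$ are $n-1$ transpositions generating $S_n$, and one checks no $n-2$ conjugates of a transposition can generate $S_n$ (the ``support graph'' on $\{1,\dots,n\}$ whose edges are the transpositions must be connected, hence needs $\ge n-1$ edges), so $\alpha((1\,2),A_n)=n-1$ exactly. For the general bound, I would split according to whether $x$ induces an inner automorphism (i.e. $x\in A_n$, or $x\in S_n\setminus A_n$ with $\langle x,\Inn(L)\rangle=S_n$) or the exotic outer automorphisms for $n=6$. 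In the generic case write $x$ as a permutation of $\{1,\dots,n\}$ moving a set of size $s\le n$; the strategy is to pick roughly $\lceil n/s\rceil$ or $\lceil 2n/s\rceil$ conjugates of $x$ whose supports overlap in a chain-like pattern so that the union of supports is all of $\{1,\dots,n\}$ and the generated group acts transitively, then add one or two further conjugates to guarantee primitivity and finally to pin down the group as $A_n$ or $S_n$ using the classical theorem that a primitive subgroup of $S_n$ containing a $3$-cycle (resp.\ a transposition) is $A_n$ (resp.\ $S_n$), or using Jordan's theorem on primitive groups containing an element with small support. The count $\lceil 2n/s\rceil + O(1)$ is then bounded by $n/2$ as soon as $s\ge 4$ (with a little care for the exact constant and small $n$), while for $s=3$ (a $3$-cycle) and $s=2$ (a transposition, excluded from the sharp bound) one does an ad hoc chain argument giving $\le n-1$; for $n=6$ with the two exotic outer classes one just checks directly (or cites the known computation) that two or three conjugates suffice, which is well within $n-1=5$.

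The main obstacle I anticipate is the bookkeeping that simultaneously secures \emph{transitivity}, \emph{primitivity}, and the \emph{small-support generator} condition using only about $n/2$ conjugates — naively, covering $\{1,\dots,n\}$ by overlapping supports of size $s$ and then forcing primitivity can cost more than $n/s$ conjugates, so the argument must reuse the same conjugates cleverly (e.g.\ arranging the supports so that the ``overlap graph'' is not just connected but has no proper block system, by making two consecutive supports meet in a single point at one place and in two points at another). A clean way to package this is: (i) choose conjugates $x_1,\dots,x_k$ with $k\approx n/s$ so that $\langle x_1,\dots,x_k\rangle$ is transitive; (ii) observe that because $x$ itself has a cycle of length $\le s$, the group is primitive unless it preserves a block system with blocks of size dividing that cycle length, a possibility one kills by adding a single generic conjugate; (iii) once primitive, the group contains a power of $x$ that is an element of support $<n$ fixing $\ge 1$ point (or a $3$-cycle/transposition after taking an appropriate power), so Jordan's theorem (or the $3$-cycle/transposition criterion) finishes it. Throughout I would reduce to the case $|x|$ prime as the excerpt permits, which makes the cycle structure of $x$ uniform (all nontrivial cycles of the same prime length $\ell$, plus fixed points), simplifying both the support count and the primitivity analysis; the residual small cases ($n\le 8$, say, and $n=6$ outer) I would handle by inspection.
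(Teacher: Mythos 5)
The paper does not prove this lemma at all: it is imported verbatim from \cite[Lemma~6.1]{GS} (and the exceptional case $n=6$ is rerouted through $A_6\cong L_2(9)$ and Lemma~\ref{alpha_classic}). So your proposal cannot be compared with an in-paper argument; it is an attempted independent proof, and in spirit it does follow the same kind of explicit-conjugates/permutation-theoretic strategy used in the cited source. However, as written it has concrete gaps and one false step, so it does not yet establish the statement.

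First, the sharp bound is not actually delivered by your counting. The claim that $\lceil 2n/s\rceil+O(1)\le n/2$ ``as soon as $s\ge 4$'' fails at $s=4$ (and is borderline for $s=5,6$ once the $O(1)$ extra conjugates for primitivity and for the final recognition step are added), yet elements of support $4$ (double transpositions, $4$-cycles) are exactly in the scope of the ``moreover'' part. Worse, you explicitly relegate $3$-cycles ($s=3$) to an ad hoc argument giving only $\le n-1$, but a $3$-cycle is not a transposition, so the lemma asserts $\alpha(x,L)\le n/2$ for it; this case is simply not covered by your plan (it is in fact easy --- a chain of $\lceil (n-1)/2\rceil$ conjugate $3$-cycles with connected support generates $A_n$ --- but that argument has to be made, and it is not the one you propose). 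Second, your step (iii) assumes that some power of $x$ has support $<n$ or is a $3$-cycle/transposition; after your reduction to $|x|$ prime, an element all of whose cycles have the same prime length and which has no fixed points (e.g.\ a fixed-point-free involution, or an $n$-cycle for $n$ prime) has no such power, so the Jordan/recognition step does not apply to precisely those classes and a different mechanism (e.g.\ analysing $x\,x^g$ for a suitable conjugate) is needed. Third, the recognition criteria you invoke have exceptions that are not confined to ``$n\le 8$'': Jordan's theorem for a $p$-cycle needs at least three fixed points, and primitive groups containing a double transposition or other small-support elements admit exceptional examples in small degrees, so ``adding a single generic conjugate to kill block systems'' and ``handle residual cases by inspection'' leaves the transitivity--primitivity--recognition bookkeeping, which you yourself identify as the main obstacle, essentially unverified. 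Finally, a minor but real point: the conjugates must be $L$-conjugates ($A_n$-conjugates), not $S_n$-conjugates, which requires a word for the classes that split in $A_n$. As it stands the proposal is a reasonable outline, but the quantitative heart of the lemma (the $n/2$ bound for every non-transposition, including $3$-cycles, double transpositions and fixed-point-free elements) is not proved.
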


Notice that  $A_6\cong L_2(9)$ so for $L=A_6$ the information on  $\alpha(x,L)$ is provided in Lemma~\ref{alpha_classic} below.

\begin{lemma}\label{Sporadic} {\em \cite[Table~1]{GS}} Let  $L$ be a simple sporadic group. Then  $\alpha(x,L)\leq 8$ for every nonidentity element  $x\in\Aut(L)$.
\end{lemma}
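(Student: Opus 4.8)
The plan is to turn the statement into a finite verification over the $26$ sporadic simple groups and to settle it by the standard fixed-point-ratio estimate for generation by conjugates; this is, in fact, exactly the computation recorded in Table~1 of~\cite{GS}. Note first that $\Aut(L)$ is either $L$ or $L.2$, so it is a concrete finite group whose conjugacy classes, maximal subgroups and class fusions are available in the ATLAS and in the character-table libraries of GAP and Magma. Next, if $y=x^{j}$ and $\langle y,\Inn(L)\rangle=\langle x,\Inn(L)\rangle$, then $\langle y^{g_{1}},\dots,y^{g_{m}}\rangle\le\langle x^{g_{1}},\dots,x^{g_{m}}\rangle$ for all $g_{1},\dots,g_{m}$, whence $\alpha(x,L)\le\alpha(y,L)$. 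When $x\in\Inn(L)$ we may therefore replace $x$ by one of its prime-order powers; when $x\notin\Inn(L)$ (so $\Aut(L)=L.2$ and $|x|$ is even) we may replace $x$ by its largest odd power, reducing to the finitely many outer classes of $2$-power order. Thus it suffices to bound $\alpha(x,L)\le 8$ for $x$ ranging over these finitely many classes.

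For the bound itself, put $G=\langle x,\Inn(L)\rangle$ and, for a maximal subgroup $M<G$, write $\mathrm{fpr}(x,M)=|x^{G}\cap M|/|x^{G}|$. Taking one conjugate to be $x$ and the remaining $k-1$ conjugates uniformly at random, a union bound over the maximal subgroups of $G$ shows that the proportion of $(k-1)$-tuples for which $x$ together with the chosen conjugates fails to generate $G$ is at most $\sum_{M}|G:M|\,\mathrm{fpr}(x,M)^{k}$, the sum running over representatives of the $G$-classes of maximal subgroups meeting $x^{G}$ (with the usual minor care about $L$- versus $G$-conjugacy when $[G:L]=2$). If this quantity is $<1$ for some $k$, then $\alpha(x,L)\le k$. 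The lemma is therefore reduced to checking, for each sporadic $L$ and each of the relevant classes of $x$, that $\sum_{M}|G:M|\,\mathrm{fpr}(x,M)^{8}<1$.

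This last step is a direct, if lengthy, calculation: for each $L$ one lists the maximal subgroups $M$, reads off or computes $|x^{G}\cap M|$ from the ATLAS fusion data, and evaluates the sum. For the vast majority of classes an exponent far smaller than $8$ already suffices; the value $8$ is needed only for the few classes whose fixed-point ratio on some primitive action is close to $1$ — principally certain involution classes and classes with exceptionally large centralizers. For the remaining borderline cases, and in the largest groups where the list of maximal subgroups is hardest to exploit in full (the Monster and the Baby Monster), one supplements the estimate either by an explicit computation in the group, exhibiting $8$ conjugates that generate $G$, or by using character-theoretic structure constants to produce a single conjugate $x^{g}$ with $\langle x,x^{g}\rangle$ contained in no maximal subgroup of $G$. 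I expect precisely these last cases to be the main obstacle: the large-centralizer classes, where the clean probabilistic bound is nearly tight and forces the numerical constant, and the two largest sporadic groups, where completeness of the maximal-subgroup data is the real difficulty. With more effort the constant $8$ can be lowered, as was carried out in~\cite{DiMPZ}, but for the present purposes the uniform bound $8$ is all that is required.
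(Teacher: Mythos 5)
The paper offers no proof of this lemma: it is quoted directly from Table~1 of \cite{GS}, so the ``paper's proof'' is that citation. Your sketch is essentially a reconstruction of how that table is obtained in \cite{GS} — reduction to prime-power-order classes, the fixed-point-ratio union bound $\sum_{M}|G:M|\,\mathrm{fpr}(x,M)^{k}<1$ over maximal subgroups, ATLAS/computational data, and ad hoc treatment of the borderline classes and of the Baby Monster and Monster — so it follows essentially the same route as the cited source, with the caveat that all the real content sits in the deferred finite case-by-case verification, which is precisely what the citation supplies.
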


\begin{lemma}\label{Exceptional} {\em \cite[Theorem~5.1]{GS}} Let $L$ be a simple exceptional group of Lie type. Then $\alpha(x,L)\leq 11$ for every nonidentity $x\in\Aut(L)$.
\end{lemma}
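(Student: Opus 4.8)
The plan is to follow the probabilistic generation strategy, reducing the bound on $\alpha(x,L)$ to fixed-point-ratio estimates together with the classification of maximal subgroups of exceptional groups. First I would reduce to the case when $x$ has prime order: if $y\ne 1$ is a power of $x$ and $g_1,\dots,g_k\in L$ are such that $H:=\langle y^{g_1},\dots,y^{g_k}\rangle$ contains $\Inn(L)$, then $\langle x^{g_1},\dots,x^{g_k}\rangle\supseteq H\supseteq\Inn(L)$ (each $y^{g_i}$ is a power of $x^{g_i}$), and since this subgroup surjects onto $\langle x\rangle\Inn(L)/\Inn(L)$ it equals $\langle x,\Inn(L)\rangle$; hence $\alpha(x,L)\le\alpha(y,L)$, and it is enough to treat $x$ of prime order.

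Next I would set up the counting criterion. Write $\hat G=\langle x,\Inn(L)\rangle$. Since $\Inn(L)$ is the unique minimal normal subgroup of $\hat G$, a subgroup containing $x$ equals $\hat G$ exactly when it lies in no maximal subgroup $M$ of $\hat G$ with $\Inn(L)\not\le M$. Picking $g_1,\dots,g_k$ uniformly in $\Inn(L)$, a standard double count bounds the probability that $\langle x^{g_1},\dots,x^{g_k}\rangle$ lies in a conjugate of such an $M$ by
$$
\sum_{M}\,\bigl|\hat G:N_{\hat G}(M)\bigr|\,\Bigl(\tfrac{|x^{\hat G}\cap M|}{|x^{\hat G}|}\Bigr)^{k},
$$
the sum taken over representatives of the $\hat G$-classes of maximal subgroups $M$ with $\Inn(L)\not\le M$. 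So $\alpha(x,L)\le k$ whenever this sum is less than $1$, and the two ingredients needed are a bound on the number of such classes (with control of the indices $|\hat G:M|$) and upper bounds for the fixed-point ratios $\operatorname{fpr}(x,\hat G/M)=|x^{\hat G}\cap M|/|x^{\hat G}|$.

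Then I would feed in the known facts about exceptional groups. The classification of maximal subgroups of exceptional groups of Lie type (the Liebeck--Seitz programme, together with the explicit lists for small $q$) shows that the number of $\hat G$-classes of core-free maximal subgroups is bounded by an absolute constant, and that each such $M$ has index at least $cq^{2}$ for an absolute constant $c>0$ (with the exponent in fact growing with the rank). The fixed-point-ratio bounds of Lawther--Liebeck--Seitz and Liebeck--Saxl (and the unipotent-class estimates of Guralnick--Lawther--Liebeck) give $\operatorname{fpr}(x,\hat G/M)\le|\hat G:M|^{-s}$ for a positive constant $s$ valid for all prime-order $x$ and all core-free $M$, apart from a short list of ``large'' configurations, essentially $M$ parabolic with $x$ a long root element, plus finitely many small-field exceptions. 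Outside that list each summand is at most $|\hat G:M|^{1-sk}$ and the resulting geometric-type sum is $<1$ already for a small absolute value of $k$.

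The remaining work is exactly where the constant $11$ comes from, and is the main obstacle. First, when $x$ is a long root element one cannot make the fixed-point ratio small, so instead I would use the fact that $L$ is generated by a bounded number of long root subgroups together with sharper estimates for the relevant parabolic actions, to still obtain an absolutely bounded $k$. Second, for the small rank or small field groups --- ${}^2B_2(q)$, ${}^2G_2(q)$, ${}^2F_4(q)$, $G_2(q)$, ${}^3D_4(q)$, and $F_4,E_6^{\pm},E_7,E_8$ over $\mathbb F_2,\mathbb F_3,\mathbb F_4$ --- the generic bounds are too weak, and one must argue group-by-group, using the explicitly known maximal subgroups and, for the smallest groups, direct computation with conjugacy classes or a computer. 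Taking the maximum of all the bounds obtained gives the uniform value $11$, and the hardest part is precisely this finite but delicate collection of small cases together with the long-root-element case.
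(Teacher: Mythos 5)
This lemma is not proved in the paper at all: it is quoted verbatim from Guralnick--Saxl \cite[Theorem~5.1]{GS}, and your sketch is essentially a reconstruction of the strategy used there (and in the fixed-point-ratio literature it rests on): reduce to $x$ of prime order, bound the probability that random $\Inn(L)$-conjugates of $x$ all lie in a core-free maximal subgroup by a sum of terms $|\hat G:N_{\hat G}(M)|\,\mathrm{fpr}(x,\hat G/M)^k$, and then invoke the Liebeck--Seitz description of maximal subgroups together with fixed-point-ratio bounds of Lawther--Liebeck--Seitz type, with separate treatment of long root elements and of the small groups. So the outline is aimed in the right direction.

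However, as a proof it has a genuine gap: the actual content of the theorem is the explicit constant $11$, and nowhere in your argument is that constant derived. Every quantitative ingredient is only gestured at --- you assert the existence of an absolute bound on the number of classes of core-free maximal subgroups, of an index bound $cq^2$, and of a ratio bound $|\hat G:M|^{-s}$ ``for a positive constant $s$'' --- but without explicit values of these constants the inequality $\sum_M |\hat G:N_{\hat G}(M)|\,\mathrm{fpr}(x,\hat G/M)^k<1$ only yields $\alpha(x,L)\le k$ for \emph{some} unspecified absolute $k$, not for $k=11$. Moreover you yourself flag the long-root-element case and the small-rank/small-field groups (${}^2B_2$, ${}^2G_2$, ${}^2F_4$, $G_2$, ${}^3D_4$, and the large exceptional groups over tiny fields) as ``the hardest part'' and leave them entirely unexecuted; in \cite{GS} it is precisely this case-by-case analysis, resting on the detailed maximal-subgroup and fixed-point-ratio data, that produces the uniform bound. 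So what you have is a correct plan of attack for the cited theorem, not a proof of the statement with the stated bound; to use it in this paper one would still have to either carry out all of that analysis or, as the authors do, simply cite \cite[Theorem~5.1]{GS}.
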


\begin{lemma} \label{SemisimpleInvariant} Let $V$ be a vector space of dimension  $n> 2$ over a finite field  $F$, equipped with either trivial, or nondegenerate bilinear or unitary form. Let $x$ be a nonscalar similarity of $V$, and assume  that the characteristic of $F$ does not divide~$|x|$. Assume also that  $x$ possesses a $1$-dimensional invariant subspace~$U$. Then there exists an $x$-invariant subspace $W$ of codimension  $1$ such that $x$ is nonscalar on it. Moreover, if $U$ is nondegenerate then $W$ can be chosen nondegenerate.
\end{lemma}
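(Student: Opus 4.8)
The plan is to use two facts. First, $x$ is semisimple: since $|x|$ is prime to the characteristic of $F$, the $F\langle x\rangle$-module $V$ is semisimple, so every $x$-invariant subspace has an $x$-invariant complement. Second, when $V$ carries a nondegenerate form, a similarity respects orthogonality: if $Y\leq V$ is $x$-invariant then so is $Y^\perp$ (indeed $xY=Y$, and writing $y=xy'$ with $y'\in Y$ one has $f(y,xz)=f(xy',xz)=\tau(x)f(y',z)=0$ for $z\in Y^\perp$), and $V=Y\perp Y^\perp$ when $Y$ is nondegenerate; here $\tau(x)\in F^\times$ is the multiplier of $x$. Fix a spanning vector $u$ of $U$, so $xu=\lambda u$ with $\lambda\in F^\times$.

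First I would produce one $x$-invariant hyperplane $W_0$: take $W_0=U^\perp$ when $V$ carries a nondegenerate form, and an $x$-invariant complement of $U$ when the form is trivial; either way $\dim W_0=n-1$. If $x$ acts nonscalarly on $W_0$ then $W=W_0$ works --- and, when $U$ is nondegenerate, $W_0=U^\perp$ is nondegenerate, which settles the ``moreover'' in this case. Otherwise $x$ acts on $W_0$ as a scalar $\mu$; by semisimplicity pick an $x$-invariant line $L$ with $V=W_0\oplus L$, on which $x$ acts as a scalar $\nu$, and note $\nu\neq\mu$ (else $x=\mu\cdot\mathrm{id}_V$). Since $n>2$, choose a hyperplane $W_0'$ of $W_0$ with $\dim W_0'=n-2\geq 1$; because $x$ is scalar on $W_0$, this $W_0'$ is $x$-invariant, so $W:=W_0'\oplus L$ is $x$-invariant of codimension~$1$, and $x|_W$ takes the two distinct scalar values $\mu$ (on $W_0'\neq 0$) and $\nu$ (on $L$), hence is nonscalar. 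This proves the first assertion.

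For the ``moreover'' I would assume $U$ nondegenerate, so $V$ carries a nondegenerate form, $W_0=U^\perp$ is nondegenerate, and $V=U\perp U^\perp$. If $x$ is nonscalar on $U^\perp$, take $W=U^\perp$. Otherwise $x$ acts on $U^\perp$ as a scalar $\mu\neq\lambda$ (else $x$ is scalar). I would then pick a nondegenerate hyperplane $H$ of $U^\perp$ and set $W:=U\perp H$, which is nondegenerate, $x$-invariant (as $x$ is scalar on $U^\perp\supseteq H$), of codimension~$1$, with $x|_W$ taking the distinct scalar values $\lambda$ on $U$ and $\mu$ on $H\neq 0$, hence nonscalar. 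Such an $H$ exists whenever $U^\perp$ has a non-isotropic vector, which holds for Hermitian forms and in odd characteristic. The one exception --- a symmetric bilinear form in characteristic~$2$ with $U^\perp$ alternating --- I would rule out: since $f(u,u)\neq 0$ the multiplier $\tau(x)$ equals $\lambda^2$, while computing it on $U^\perp$, where $f$ is nonzero, gives $\tau(x)=\mu^2$; hence $\mu^2=\lambda^2$, so $\mu=\lambda$ in characteristic~$2$, contradicting $\mu\neq\lambda$.

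The step I expect to be the only real obstacle is this last subcase: over a field of characteristic~$2$ a nondegenerate symmetric bilinear space can have an alternating (necessarily even-dimensional) form on a subspace, and such a subspace has no nondegenerate hyperplane at all, so one cannot naively split off a nondegenerate hyperplane of $U^\perp$. The multiplier identity above is what circumvents this by showing the offending configuration does not occur when $U$ is nondegenerate. Apart from that, the argument is just bookkeeping with semisimple modules together with the hypothesis $n>2$, which is exactly what makes the hyperplanes $W_0'$ and $H$ positive-dimensional --- i.e.\ what guarantees that the constructed $W$ genuinely carries two distinct eigenvalues of $x$.
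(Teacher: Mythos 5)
Your proof is correct, and its skeleton matches the paper's: start from the hyperplane $W_0=U^{\perp}$ (or an $x$-invariant complement of $U$ when the form is trivial), and treat separately the case where $x$ is scalar on it. The difference lies in how the second hyperplane is produced. The paper stays inside $W_0$: since $x$ is scalar there, every line $U_0\leq W_0$ is invariant; one picks $U_0$ with $U_0^{\perp}\neq W_0$ (nondegenerate when $U$ is), takes $W=U_0^{\perp}$ (or an invariant complement of $U_0$), and deduces nonscalarity from $V=W_0+W$ together with $W_0\cap W\neq 0$, which uses $n>2$. You instead assemble $W$ as a direct sum — an arbitrary hyperplane of $W_0$ plus an invariant complement line $L$ for the plain statement, and $U\perp H$ with $H$ a nondegenerate hyperplane of $U^{\perp}$ for the refinement — and read off nonscalarity from the two distinct eigenvalues; the cost is about the same. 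The one genuine divergence is your handling of characteristic $2$: the paper justifies the existence of a nonisotropic vector in the nondegenerate hyperplane by the remark that the form is not symplectic, which is fully convincing only in odd characteristic (in characteristic $2$ a nondegenerate symmetric form may restrict to an alternating form on $U^{\perp}$, e.g.\ $\langle e_1\rangle$ orthogonal to a hyperbolic alternating plane in dimension $3$), whereas your multiplier identity $\tau(x)=\lambda^{2}=\mu^{2}$, hence $\lambda=\mu$ by injectivity of squaring, shows that in that situation $x$ cannot be scalar on $U^{\perp}$ at all, so the troublesome branch is vacuous. This extra computation is a worthwhile supplement to the argument as written in the paper.
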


\begin{proof} Let   $W$ be either an  $x$-invariant complement of $U$, if the form is trivial (the existence of such a complement follows by the Maschke theorem), or  $W=U^\perp$, if the form is nondegenerate. In any case,  $W$ is  $x$-invariant of codimension~1 and $W$ is nondegenerate if $U$ is nondegenerate.

If $x$ is nonscalar on $W$, we obtain the lemma. Otherwise, we show that there is an $x$-invariant subspace $W_0$ of codimension 1 such that $W_0\ne W$. This implies that $V=W+W_0$ and $W\cap W_0\ne 0$, because $n>2$. Since $x$ is nonscalar on $V$, we conclude that $x$ is nonscalar on~$W_0$. Furthermore, we show that $W_0$ can be chosen nondegenerate if $U$ is nondegenerate.

  If $x$ is scalar  on $W$ then every $1$-dimension subspace of $W$ is $x$-invariant. Moreover, if the form is nondegenerate, one can choose an $1$-dimension subspace $U_0$ of $W$ such that $U_0^\perp\ne W$ (otherwise $W$ would be totally isotropic which contradicts the Witt lemma).   Moreover, if $U$ is nondegenerate, then the form is not symplectic. Therefore, nondegenerate subspace $W$ possesses a nonisotropic vector, and $U_0$ can be chosen nondegenerate.
Choose $W_0$ to be an $x$-invariant complement of $U_0$, if the form is trivial, or $W_0=U_0^\perp$, if the form is nondegenerate. It is easy to see that $W_0$ satisfies the conclusion of Lemma~\ref{SemisimpleInvariant}. 
\end{proof}

 \begin{center}\begin{table}
\caption
{Bounds on $\alpha(x,L)$ for classical groups $L$}\label{tab2}
\begin{center}\begin{tabular}{|c|c|c|c|r|}\hline
    \multirow{2}{*}{ $L$ }   & conditions & conditions & conditions  & \multirow{2}{*}{$\alpha(x,L)$} \\
  & on $n$   & on $q$  &  on $x$     &   \\
\hline\hline
\multirow{18}{*}{$A_{n-1}(q)\cong L_n(q)$} & \multirow{10}{*}{$n=2$}  & \multirow{3}{*}{$q\ne 5,9$} & $|x|> 2$      & $2$   \\
                                               \cline{4-5}
                         &        &         & field,  $|x|=2$       & $\leq 4$  \\
                                              \cline{4-5}
                         &        &             &\multirow{2}{*}{ not field, $|x|=2$}     & \multirow{2}{*}{$3$}\\
                                  \cline{3-3}
                         &        & \multirow{4}{*}{$q= 9$}  &     & \\
                                             \cline{4-5}
                         &        &        & field,   $|x|=2$    & $5$     \\
                                             \cline{4-5}
                         &        &        & $|x|=3$       & $3$     \\
                                             \cline{4-5}
                         &        &       & $|x|>3$       & $2$    \\
                                  \cline{3-5}
                         &        & \multirow{3}{*}{$q=5$}    &  $|x|>2$      & $2$  \\
                                             \cline{4-5}
                         &        &          & not diagonal, $|x|=2$   & $3$  \\
                                             \cline{4-5}
                         &        &          & diagonal,  $|x|=2$      & $4$  \\
                                  \cline{3-5}

                         \cline{2-5}
                         & \multirow{3}{*}{ $n=3$} &          & not graph-field      & \multirow{2}{*}{$\leq 3$} \\
                         &        &          & or $|x|\ne 2$       &   \\
                                             \cline{4-5}
                         &        &          &    graph-field,  $|x|=2$     & $\leq 4$  \\
                         \cline{2-5}
                         &  \multirow{4}{*}{$n=4$} &  \multirow{2}{*}{$q>2$}  &graph     & $\leq 6$  \\
                                            \cline{4-5}
                         &        &        &       \multirow{2}{*}{not graph}      &  \multirow{2}{*}{$\leq 4$}  \\
                                  \cline{3-3}
                         &        & \multirow{2}{*}{$q=2$}  &      & \\
                                           \cline{4-5}
                         &        &        &    graph     & $7$  \\
                         \cline{2-5}
                         &  \multirow{1}{*}{$n>4$} &          &               & $\leq n$  \\
                                  \hline   \hline
\multirow{11}{*}{${^2A}_{n-1}(q)\cong U_n(q)$} & \multirow{4}{*}{$n=3$}      &$q>3$    &                      &    $\leq 3$  \\
                                          \cline{3-5}
                             &            &\multirow{3}{*}{$q=3$}    & not inner        &    \multirow{2}{*}{$\leq 3$}  \\
                             &            &         & or $|x|\ne 2$       &              \\
                                                    \cline{4-5}
                             &            &         &inner, $|x|=2$   & $4$           \\
                             \cline{2-5}
                             & \multirow{6}{*}{$n=4$}      &  \multirow{2}{*}{$q>2$}   & not graph         & $\leq 4$  \\
                                                     \cline{4-5}
                             &           &           &     \multirow{2}{*}{graph }        &  \multirow{2}{*}{$\leq 6$ } \\
                                         \cline{3-3}
                             &           & \multirow{4}{*}{$q=2$}      &           &   \\  \cline{4-5}
                             &           &        & transvection         & $\leq 5$       \\
                                                    \cline{4-5}
                             &           &           & not transvection      & \multirow{2}{*}{$\leq 4$}  \\

                             &           &       & or not graph     &      \\
                             \cline{2-5}
                             &  $n>4$    &           &                    & $\leq n$  \\
                                  \hline   \hline
\multirow{7}{*}{$C_{n}(q)\cong S_{2n}(q)$} &  \multirow{4}{*}{ $n=2$} &    \multirow{2}{*}{ $q>3$}   & $|x|= 2$           & $\leq 5$  \\
                                                     \cline{4-5}
                             &           &             &  \multirow{2}{*}{$|x|> 2$ }          &  \multirow{2}{*}{$\leq 4$ }   \\
                                         \cline{3-3}
                             &           & \multirow{2}{*}{$q=3$}     &           &  \\
                                                     \cline{4-5}
                             &           &           & $|x|= 2$           & $\leq 6$   \\
                             \cline{2-5}
                             &  \multirow{3}{*}{$n>2$}    &           &  not transvection    & $\leq n+3$  \\
                                         \cline{3-5}
                             &           & odd   &    \multirow{2}{*}{  transvection  }  & $\leq 2n$  \\
                                         \cline{3-3} \cline{5-5}
                             &           & even     &       & $\leq 2n+1$  \\
                           \hline   \hline
 \multirow{4}{*}{$B_{n}(q)\cong O_{2n+1}(q)$} & \multirow{4}{*}{$n\geq 3$}  & \multirow{2}{*}{odd} &   reflection      &  $ 2n+1$    \\
                                                    \cline{4-5}
                             &            &         &  not reflection        & \multirow{2}{*}{$\leq n+3$} \\
                                          \cline{3-4}
                             &            & \multirow{2}{*}{even}   &    not transvection &  \\
                                                     \cline{4-5}
                             &            &         &     transvection    & $2n+1$  \\
                             \hline   \hline

  \multirow{2}{*}{$D_{n}(q)\cong O_{2n}^+(q)$,} & \multirow{4}{*}{$n\geq 4$}  & \multirow{2}{*}{odd} &   reflection &  $ 2n$    \\
                                                    \cline{4-5}
                             &            &         &  not reflection        & \multirow{2}{*}{$\leq n+3$} \\
                                          \cline{3-4}
  \multirow{2}{*}{${}^2D_{n}(q)\cong O_{2n}^-(q)$}  &            & \multirow{2}{*}{even}   &    not transvection    &  \\
                                                     \cline{4-5}
                             &            &         &     transvection    & $2n$  \\
                             \hline
\end{tabular}\end{center}
\end{table}\end{center}

\begin{lemma} \label{alpha_classic}
Let  $L$ be a simple classical group and   $x$ its automorphism of prime order. Then a bound on  $\alpha(x,L)$ is given in the last column of Table~{\em\ref{tab2}}.
\end{lemma}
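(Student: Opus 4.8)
The plan is to treat each family of classical groups $L$ in Table~\ref{tab2} separately, realizing $x$ as an automorphism of the natural module $V$ (of dimension $n$, or $2n$, or $2n+1$ according to the type) and producing a ``small'' set of $L$-conjugates of $x$ that generate $\langle x,\Inn(L)\rangle$. For each case I would split according to whether $x$ is inner-diagonal, field (or graph-field) modulo $\L$, or graph modulo $\L$, following the classification of automorphisms recalled in Lemmas~\ref{Aut}, \ref{Field_Aut}, and in the notation set up before the table. The uniform mechanism is: a pair of conjugates $x,x^g$ already generates an irreducible (or large) subgroup acting on $V$, and then one adds a bounded number of further conjugates to force the full inner-diagonal group; for field and graph-field automorphisms one uses the subfield-subgroup structure of the centralizer given by Lemma~\ref{Field_Aut}, together with an inductive argument on the rank, to reduce to the inner-diagonal case in a bounded number of steps.

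First I would dispose of the small-rank cases ($L_2(q)$, $L_3^\varepsilon(q)$, $L_4^\varepsilon(q)$, $U_3(q)$, $S_4(q)$, and the low-dimensional exceptional isomorphisms, including $A_6\cong L_2(9)$), where the explicit numerical bounds in the table (e.g. $\alpha(x,L)\le 4$, $\le 6$, $\le 7$, the value $2n$ or $2n+1$ for transvections, etc.) come from direct analysis — largely from \cite{GS} itself, whose Lemma~6.1, Theorem~5.1 and the classical-group computations are quoted in Lemmas~\ref{alpha_A_n}--\ref{Exceptional} and underlie Table~\ref{tab2}. In fact the cleanest route is to observe that Table~\ref{tab2} is essentially a transcription of the classical-group bounds of \cite{GS} (Sections~3--4 there), reorganized by type, and so the bulk of the proof is a reference to those results; what remains is to check the generic large-rank entries $\alpha(x,L)\le n$ for $L_n^\varepsilon(q)$ with $n>4$, $\alpha(x,L)\le n+3$ for the other classical types when $x$ is ``not a transvection/reflection'', and the sharp values $2n$, $2n+1$, $2n+1$, $2n$ for transvections and reflections in $S_{2n}$, $O_{2n+1}$, and $O_{2n}^\pm$.

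For the generic bound I would argue as follows. If $x$ is inner-diagonal and not a transvection/reflection, then $x-1$ has rank $\ge 2$ on $V$, and I would show that a bounded number (roughly $\dim V$ divided by $\mathrm{rk}(x-1)$, hence $\le n+3$ or $\le n$) of conjugates generate a subgroup that is irreducible on $V$ and contains $\Omega(V)$; here Lemma~\ref{SemisimpleInvariant} is the key tool for semisimple $x$ (it lets one peel off an invariant hyperplane on which $x$ still acts nonscalarly, enabling induction on dimension), while for unipotent $x$ one uses the analogous peeling via an invariant hyperplane complement. For transvections and reflections $x-1$ has rank $1$, so one genuinely needs about $\dim V$ conjugates to span, and the table records the exact values $2n$, $2n+1$ etc.; the matching lower bound is immediate since $k$ transvections with $k<\dim V$ cannot act irreducibly. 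For field and graph-field $x$ I would use Lemma~\ref{Field_Aut}: two such conjugates already generate a subgroup containing $\Omega(V)$ over the relevant subfield plus the twist, and a further bounded batch pushes up to $\langle x,\Inn(L)\rangle$, with the graph-field case ($|x|=2$, $C_L(x)$ between ${}^2\Sigma(q^{1/2})$ and its inner-diagonal closure) handled by a two-layer version of the same argument.

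The main obstacle will be the graph automorphisms of the orthogonal and unitary groups, where the centralizer structure is intricate (Lemmas~\ref{GraphAutGLU}, \ref{Graph_Inv_D_n}, \ref{Graph_Inv_D_n_even_q} were proved precisely to handle this): one must track which nondegenerate subspace of $V$ is normalized but not centralized by $x$, iterate the construction while controlling the accumulated number of conjugates, and verify that the resulting subgroup — generated by images of $x$ acting as graph involutions on successively smaller-dimensional orthogonal/unitary spaces — is large enough to contain $\Inn(L)$ rather than a proper subsystem subgroup or a subfield subgroup. Keeping the count within $n+3$ (resp.\ $n$, $2n$, $2n+1$) in every such case is where the careful bookkeeping lies; for the exceptional and sporadic $L$ and the small-rank classical $L$ one simply invokes Lemmas~\ref{alpha_A_n}--\ref{Exceptional} and the explicit entries, so no new work is needed there.
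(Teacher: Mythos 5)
Your middle observation is in fact the whole story: the paper offers no proof of Lemma~\ref{alpha_classic} at all --- Table~\ref{tab2} is a transcription (reorganized by type and rank) of the classical-group bounds of Guralnick and Saxl, exactly as the neighbouring quoted results (Lemmas~\ref{alpha_A_n}, \ref{Sporadic}, \ref{Exceptional}, \ref{alpha_irreducible}, \ref{Parabolic}) are, so the intended ``proof'' is simply the citation of \cite{GS}. In that sense the core of your proposal coincides with the paper's route.

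Where your proposal goes astray is in the claim that ``what remains is to check the generic large-rank entries.'' Nothing remains: the entries $\alpha(x,L)\le n$ for $L_n^{\varepsilon}(q)$ with $n>4$, $\alpha(x,L)\le n+3$ for the symplectic/orthogonal types away from transvections and reflections, and the values $2n$, $2n+1$ for transvections and reflections are precisely the statements of the classical-group theorems of \cite{GS}, not gaps left by them. Moreover, the sketch you offer in their place would not survive as an independent argument: the assertion that two conjugate field automorphisms already generate a subgroup containing a subfield copy of $\Omega(V)$ is unproved; the ``peeling'' induction via Lemma~\ref{SemisimpleInvariant} gives an invariant subspace but by itself does not yield the count $n+3$ (or $n$) of conjugates needed to reach $\Inn(L)$ rather than a reducible, imprimitive, or subfield subgroup; and the graph-automorphism bookkeeping you flag as ``the main obstacle'' is exactly the long case analysis occupying Sections~3--4 of \cite{GS}, which you do not carry out. (Also, matching lower bounds are irrelevant here: the lemma only asserts upper bounds, which is all the later arguments use.) None of this damages the lemma, provided you drop the re-derivation and rest the entire table on the citation, as the paper does.
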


As a corollary to Lemma \ref{alpha_classic}, we immediately obtain

\begin{lemma}\label{Small_rank} Let  $L$ be a simple classical group of Lie type, possessing an automorphism $x$ of prime order such that $\alpha(x,L)> 11$. Then
\begin{itemize}
  \item[$(1)$] if $L=A_{n-1}(q)=L_n(q)$ or $L={}^2A_{n-1}(q)=U_n(q)$, then $n\geq12$;
  \item[$(2)$] if $L=B_{n}(q)=O_{2n+1}(q)$ or $L=C_{n}(q)=S_{2n}(q)$, then $n\geq6$;
  \item[$(3)$] if $L=D_{n}(q)=O^+_{2n}(q)$ or $L={}^2D_{n}(q)=O^-_{2n}(q)$, then $n\geq6$.
\end{itemize}
\end{lemma}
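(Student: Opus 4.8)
The plan is to deduce the statement directly from Lemma~\ref{alpha_classic}, i.e.\ from Table~\ref{tab2}, by contraposition: I would check that for each family of classical groups and each rank $n$ strictly below the bound asserted in Lemma~\ref{Small_rank}, every entry in the last column of Table~\ref{tab2} is at most $11$, so that the hypothesis $\alpha(x,L)>11$ becomes impossible.

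First I would handle the linear and unitary groups $L=L_n^\varepsilon(q)$ with $2\leq n\leq 11$. For $n\in\{2,3,4\}$ one scans the relevant blocks of Table~\ref{tab2}: the largest value occurring there is $7$ (from the $L_4(2)$ row with $x$ a graph automorphism), so $\alpha(x,L)\leq 7$; for $5\leq n\leq 11$ the table gives the uniform bound $\alpha(x,L)\leq n\leq 11$. Hence $\alpha(x,L)>11$ forces $n\geq 12$, which is~(1). Next I would treat $L=S_{2n}(q)$ and $L=O_{2n+1}(q)$ with $n\leq 5$: for $C_n(q)$ with $n=2$ every entry of Table~\ref{tab2} is $\leq 6$, while for $C_n(q)$ with $n>2$ and for $B_n(q)$ the only entries depending on $n$ are $n+3$, $2n$, and $2n+1$, which for $n\leq 5$ are $\leq 8$, $\leq 10$, and $\leq 11$ respectively; so $\alpha(x,L)>11$ forces $n\geq 6$, which is~(2). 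Finally, for $L=O_{2n}^\pm(q)$ with $4\leq n\leq 5$, Table~\ref{tab2} gives $\alpha(x,L)\leq\max\{n+3,2n\}=2n\leq 10$, so $\alpha(x,L)>11$ forces $n\geq 6$, which is~(3).

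Since the argument is nothing more than a bookkeeping exercise over Table~\ref{tab2}, there is no genuine obstacle. The only point deserving a little care is to confirm that, within each family, the entries that actually grow with $n$ are precisely the transvection/reflection bounds ($2n$ and $2n+1$) together with the generic linear bound $n$, so that these are the only constraints capable of pushing $\alpha(x,L)$ above $11$; one also checks that the various exceptional small-field rows ($q=2,3,5,9$) contribute only values well below $11$ at the ranks in question.
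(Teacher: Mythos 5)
Your proposal is correct and is exactly how the paper obtains this lemma: the paper states it as an immediate corollary of Lemma~\ref{alpha_classic}, i.e.\ of the bounds in Table~\ref{tab2}, and your case-by-case check (maximum $7$ for $L_n^\varepsilon(q)$ with $n\leq 4$, bound $n\leq 11$ for $5\leq n\leq 11$, bounds $n+3$, $2n$, $2n+1\leq 11$ for $B_n$, $C_n$ with $n\leq 5$, and $\max\{n+3,2n\}\leq 10$ for $D_n^{\pm}$ with $n\leq 5$) is precisely the intended bookkeeping.
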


\begin{lemma} \label{alpha_irreducible}
Let  $L$ be a simple classical group and $x$ be its automorphism of prime order, induced by an irreducible similarity. Then $\alpha(x,L)\leq 3$.
\end{lemma}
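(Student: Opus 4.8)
\emph{Structural input on $x$.} The guiding principle is that an irreducible similarity cannot be broken into smaller invariant pieces, which forces its order to be large, and that Aschbacher's subgroup theorem — refined where possible by the classification of irreducible linear groups of order divisible by a primitive prime divisor — severely restricts the subgroups that can contain two or three $L$-conjugates of $x$, the bound $3$ then following from a counting argument. In detail: let $V$ be the natural module of $L$ over its field $\mathbb{F}_{q_0}$ (so $q_0=q$, except $q_0=q^2$ in the unitary case), put $d=\dim_{\mathbb{F}_{q_0}}V$, let $\widehat x$ be an irreducible similarity inducing $x$, so $x\in\Inndiag(L)$, and let $p=\mathrm{char}\,\mathbb{F}_q$. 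A nonidentity unipotent element has a nonzero fixed vector, so $\widehat x$ is semisimple and $x$ has prime order $\ell\ne p$; as $1$ is not an eigenvalue of $\widehat x$, it is fixed-point-free. Irreducibility makes the characteristic polynomial of $\widehat x$ (up to the symmetry imposed by the form in the non-linear cases) an irreducible polynomial of degree $d$ whose roots generate $\mathbb{F}_{q_0^d}$; in particular $O_{2n+1}(q)$ and $O^+_{2n}(q)$ have no irreducible similarities, so these families are vacuous here. Tracking the orders of the eigenvalues gives: either $\mathrm{ord}_\ell(q_0)=d$, so $\ell$ is a primitive prime divisor of $q_0^d-1$ and $\ell>d$; or $\ell$ divides $q_0-1$, which forces $d=\ell$ (an odd prime) and occurs only for $L=L_\ell(q)$ or $L=U_\ell(q)$, with $\widehat x$ lying in a Singer cycle. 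In all cases $\langle\widehat x\rangle$ is contained in a uniquely determined maximal ``Singer'' torus $T$ of $\L:=\Inndiag(L)$ with $N_{\L}(T)/T$ cyclic of order dividing $d$.

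\emph{Generation strategy and overgroups.} It suffices to produce $L$-conjugates $x_1,x_2,x_3$ of $x$ with $\Inn(L)\le H:=\langle x_1,x_2,x_3\rangle$, since then $H=\langle x,\Inn(L)\rangle$. Because $\langle x_1\rangle$ acts irreducibly on $V$, so does $H$, so $H$ lies in no reducible maximal subgroup; and, by Aschbacher's theorem for the almost simple group $\langle x,\Inn(L)\rangle$, either $H$ contains the classical group of $L$ (hence $H\ge\Inn(L)$) or $H$ lies in a member of one of the classes $\mathcal{C}_2,\dots,\mathcal{C}_8$ or of $\mathcal{S}$. In the primitive-prime-divisor case, the imprimitive class $\mathcal{C}_2$ is excluded (an irreducible element of prime order $\ell$ would have to be a single cycle on at most $d<\ell$ blocks), and so are the tensor classes $\mathcal{C}_4,\mathcal{C}_7$ (the order-$\ell$ action on a proper tensor factor would force $\mathrm{ord}_\ell(q_0)$ to divide a proper divisor of $d$); for the remaining classes $\mathcal{C}_3,\mathcal{C}_5,\mathcal{C}_6,\mathcal{C}_8,\mathcal{S}$ the classification of irreducible subgroups whose order is divisible by a primitive prime divisor of $q_0^d-1$ of order exactly $d$ (Hering's theorem with the refinements of Guralnick, Penttila, Praeger and Saxl) shows that $H$ normalizes $T$, or lies in a subfield subgroup, or — for linear $L$ — preserves a nondegenerate bilinear, quadratic or unitary form on $V$, or has order bounded independently of $q$ (a member of $\mathcal{C}_6$ or one of a short list of almost simple groups, occurring only for finitely many small $d$). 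In the exceptional Singer-cycle case one keeps the full Aschbacher list and instead uses the crude bound that every non-reducible maximal overgroup $M$ of $x$ other than a Singer-cycle normalizer or a subfield subgroup has $\log_q|M|$ much smaller than $\log_q(|L|/|C_L(x)|)$.

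\emph{The counting step; the obstacle.} Since $\widehat x$ is fixed-point-free, the torus $T$ and every relevant subfield or form structure is uniquely determined by $\langle\widehat x\rangle$, so $\langle x_1,x_2\rangle$ can lie in one of these maximal overgroups $M$ only if $x_2\in M$; and the proportion of $x^L$ inside any relevant $M$ is at most $|M|\cdot|C_L(x)|/|L|$, which is far below $1/3$ as soon as $L$ avoids a bounded list of small groups. Summing over the boundedly many conjugacy classes of such $M$ — and, in the exceptional case, controlling the possibly large number of monomial and $\mathcal{S}$-type overgroups by a second-moment estimate on the number of common overgroups of a random pair of conjugates — yields $x_2$ with $\langle x_1,x_2\rangle$ lying in no overgroup except possibly one member of the finite short list, and then a third conjugate $x_3$ escaping that member as well (for symplectic, unitary and orthogonal $L$, ``escaping the form subgroup'' is replaced by noting that a second form of the same type would put $H$ back into $N_{\L}(T)$). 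Hence $\Inn(L)\le H$ and $\alpha(x,L)\le3$; the finitely many small $L$ for which the counting inequality fails are settled by Table~\ref{tab2} and Lemma~\ref{alpha_classic}, whose bounds there already give $\alpha(x,L)\le3$. The main obstacle is exactly this last step — turning ``almost every choice works'' into ``three choices suffice'' — which requires the inequality $|x^L|>3\,|M|$ for every relevant maximal $M$ and the fact that at most one further maximal overgroup survives two generic conjugates; a secondary difficulty is making the primitive-prime-divisor input uniform over the four families (careful bookkeeping of the form-induced symmetry of the characteristic polynomial so that $d$, and with it $\ell$, is correctly identified), together with the direct handling of the exceptional Singer-cycle cases $L_\ell(q)$ and $U_\ell(q)$.
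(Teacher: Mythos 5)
You should first know that the paper does not prove this lemma at all: its ``proof'' is a pointer to Guralnick--Saxl, namely the proofs of Theorems 4.1--4.4 of \cite{GS} (pages 534, 536, 538, 539 for the linear, unitary, symplectic and orthogonal cases). So the benchmark here is whether your from-scratch outline (Aschbacher classes plus primitive prime divisors plus a counting argument, in the spirit of \cite{GS}) actually closes. It does not. The decisive step --- producing from ``the proportion of conjugates inside each relevant maximal overgroup is small'' the conclusion that \emph{three} conjugates suffice --- is only asserted: you never establish the inequality $|x^L|>3|M|$ (summed over all conjugacy classes of relevant overgroups, including the subfield, $\mathcal{C}_6$ and $\mathcal{S}$ members, whose number of classes and whose orders you do not bound), the claim that at most one further maximal overgroup survives two generic conjugates is unsupported, the ``second-moment estimate'' in the Singer-cycle case is never performed, and the ``crude bound'' asserted for non-torus, non-subfield overgroups when $d=\ell$ divides $q_0-1$ is not proved. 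You yourself flag this as the main obstacle, which is an accurate self-assessment: as written this is a plan, not a proof.

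There is also a concrete error in your fallback for the exceptional small cases. You claim the finitely many $L$ where the counting inequality fails ``are settled by Table~\ref{tab2} and Lemma~\ref{alpha_classic}, whose bounds there already give $\alpha(x,L)\leq3$.'' That is false: the bounds in Table~\ref{tab2} are of the shape $\leq 4$, $\leq 5$, $\leq 6$, $\leq n$, $\leq n+3$, $\leq 2n+1$, etc., and only a few entries are $\leq 3$; in particular the table does not single out irreducible elements, so it cannot certify $\alpha(x,L)\leq 3$ for the leftover groups, which would each need an individual argument (this is exactly the sort of case-by-case work \cite{GS} carries out). The structural first part of your sketch (irreducible $\Rightarrow$ semisimple and fixed-point-free, Schur's lemma giving a unique Singer torus, vacuousness of $O_{2n+1}(q)$ and $O^+_{2n}(q)$, exclusion of $\mathcal{C}_2$, $\mathcal{C}_4$, $\mathcal{C}_7$ in the ppd case, and the appeal to Hering/GPPS) is sound and is indeed the standard route, but without the quantitative generation step and a correct treatment of the small and Singer-cycle exceptions the bound $\alpha(x,L)\leq 3$ is not established.
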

\begin{proof} See \cite[proof of Theorems 4.1--4.4]{GS}. More precisely, for linear groups \cite[page~534]{GS}, for unitary groups \cite[page~536]{GS}, for symplectic groups \cite[page~538]{GS}, and for orthogonal groups \cite[page~539]{GS}.
\end{proof}

\begin{lemma} \label{Parabolic} {\em \cite[Lemma 2.2]{GS}}
Let $L$ be a simple group of Lie type, let $G = \L$ and let $x\in G$.
\begin{itemize}
\item[$(1)$] If  $x$ is unipotent, let  $P_1$ and $P_2$ be distinct maximal parabolic subgroups containing a common Borel subgroup of $G$ with unipotent radicals  $U_1$ and  $U_2$. Then $x$ is conjugate to an element of  $P_i\setminus U_i$ for  $i = 1$ or $i = 2$.
\item[$(2)$]  If  $x$ is semisimple, assume that $x$ lies  in a parabolic subgroup of~$G$. If the rank of  $L$  is at least two, then there exists a maximal parabolic subgroup~$P$ with Levi complement  $J$ such that $x$ is conjugate to and element from  $J$, not centralized by any Levi component (possibly solvable) of~$J$.
\end{itemize}
\end{lemma}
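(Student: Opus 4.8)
The plan is to quote the argument of Guralnick--Saxl \cite[Lemma~2.2]{GS}, which I reconstruct as follows. For part~(1), let $x$ be unipotent. Fix a Borel subgroup $B$ of $G=\L$ and the two maximal parabolics $P_1=BW_1B$, $P_2=BW_2B$ containing it, with unipotent radicals $U_1,U_2$. The key point is that $U_1\cap U_2$ is contained in the unipotent radical $U$ of $B$, and in fact $U=(U_1\cap U_2)\cdot(\text{root subgroups split between the two ends})$; more precisely, every root subgroup of $U$ lies in $U_1$ or in $U_2$ (a root is in $U_i$ unless its support avoids the $i$-th node). So if a conjugate of $x$ lay in $U_1\cap U_2$ for \emph{every} $G$-conjugate, then since every unipotent element is conjugate into $U$, and $U\subseteq U_1\cup U_2$ set-theoretically is false in general — instead one argues on the level of the $P_i$: if $x$ were conjugate into $U_1$ \emph{and} every conjugate into $U_2$ were impossible, push to a contradiction via the Bruhat decomposition. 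The clean formulation actually used is: the intersection over all conjugates of $P_1$ of the $U_1$-parts, together with the analogous statement for $P_2$, cannot simultaneously contain the conjugacy class of $x$ unless $x=1$; equivalently $x^G\not\subseteq U_1^G$ forces $x$ to meet some $P_i\setminus U_i$. First I would set up the Bruhat decomposition, then observe that $x$, being unipotent, lies in some conjugate of $U$, hence in $P_1\cap P_2$; if it lay in both $U_1$ and $U_2$ under that conjugation it would lie in $U_1\cap U_2$, which is a proper subgroup of $U$ normalized by the torus, and iterating (replacing $x$ by a suitable conjugate and using that the common Borel's unipotent radical is generated by root subgroups each lying in $U_1$ or $U_2$) forces $x=1$. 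So some conjugate of $x$ lies in $U_1$ but not $U_2$, i.e.\ in $P_1\setminus U_1$, or symmetrically in $P_2\setminus U_2$.

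For part~(2), let $x$ be semisimple and lie in some parabolic $Q$ of $G$; choose $Q$ maximal among parabolics containing a conjugate of $x$ and, among those, of maximal Levi rank, or rather choose $Q$ minimal containing $x$. Write $Q=JV$ with $V$ the unipotent radical and $J$ a Levi complement; since $x$ is semisimple we may conjugate within $Q$ so that $x\in J$ (a semisimple element of $JV$ is $V$-conjugate into $J$, by the standard fact that $H^1$ of a torus acting on a unipotent group vanishes, i.e.\ $C_V(x)$-cohomology). Now suppose $x$ is centralized by some Levi component $J_0$ of $J$ (a factor in the commuting product decomposition of $J$ into its derived components and central torus). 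Then $x$ centralizes $J_0$, so $\langle x\rangle$ together with a maximal parabolic of $J_0$ generates a group inside a smaller parabolic of $G$ (pull back along $J\twoheadrightarrow J/J_0$-type reasoning), contradicting minimality of $Q$ — more carefully, one shows $x$ then lies in a proper parabolic of $J$, whose preimage in $Q$ is a parabolic of $G$ properly contained in $Q$, contradiction. Hence no Levi component of $J$ centralizes $x$. The rank-at-least-two hypothesis guarantees $Q$ can be taken maximal (rank of $J$ at least one) so that this makes sense; if rank were one, $J$ could be a torus and the statement would be vacuous or trivial.

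The main obstacle I anticipate is making the reduction in part~(2) fully rigorous: the passage ``$x$ centralizes a Levi component $J_0$ $\Rightarrow$ $x$ lies in a strictly smaller parabolic'' needs care, because one must produce an actual parabolic subgroup of $G$ (not merely a reductive overgroup) that is strictly smaller and still contains a conjugate of $x$. The honest route is to note that if $x$ centralizes the component $J_0$, then replacing $Q$ by the parabolic $Q'$ whose Levi is generated by $J_0$ and a proper parabolic's Levi of the complementary part still contains $x$ but has strictly smaller unipotent radical or strictly smaller semisimple rank, contradicting the chosen minimality/extremality of $Q$; getting the bookkeeping of root systems right here is the delicate part. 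Since the statement is quoted verbatim from \cite[Lemma~2.2]{GS}, in the paper it suffices to cite that reference, and I would do so rather than reproving it; the sketch above is only to indicate why it holds.
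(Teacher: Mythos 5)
Your conclusion is the right one: the paper gives no proof of this lemma at all — it is quoted verbatim from \cite[Lemma~2.2]{GS} — so citing that reference, as you ultimately do, is exactly the paper's approach. The reconstruction you sketch is extra (and the part~(1) argument in particular wobbles mid-paragraph and would need the actual Guralnick--Saxl argument to be made rigorous), but since the citation is the intended justification, that does not affect correctness here.
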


\subsection{Large  $r'$-subgroups in classical groups}

\begin{lemma} \label{r_not divi}
Let  $L$ be a simple classical group or an alternating group. Assume that an odd prime  $r$ does not divides the order of $L$.
Then the following statements hold.
\begin{itemize}
  \item[$(1)$] If $L=A_n$, then $n\leq r-1$.
  \item[$(2)$] If $L=L_n(q)$, then  $n\leq r-2$ and  $ r\geq n+2.$
  \item[$(3)$] If $L=U_n(q)$, then  $n\leq r-2$ and $ r\geq n+2.$
  \item[$(4)$] If $L=S_{2n}(q)$, then  $2n\leq r-3$ and   $r\geq 2n+3.$
  \item[$(5)$] If $L=O_{2n+1}(q)$, then  $2n\leq r-3$ and   $r\geq 2n+3.$
  \item[$(6)$] If $L=O^+_{2n}(q)$ or $L=O^-_{2n}(q)$, then  $2n\leq r-1$ and    $r\geq 2n+1.$
  \item[$(7)$] If $L=L_n(q^2)$, then  $2n\leq r-3$ and   $r\geq 2n+3.$
\end{itemize}
\end{lemma}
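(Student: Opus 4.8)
Each assertion of the lemma is a single implication---$r\nmid|L|$ forces the stated bound on the rank---and we prove its contrapositive: large rank forces $r\mid|L|$. The plan is to exhibit in each case an explicit divisor of $|L|$ of the form $q^{k}-1$, with $k$ chosen in terms of $r$, which $r$ divides, reading the factorizations straight off Table~\ref{tab1} and invoking only Fermat's little theorem. For groups of Lie type we may assume $r\nmid q$: otherwise $r$ divides the (nontrivial) power of $q$ occurring in $|L|$, so $r\mid|L|$ and all the asserted inequalities hold vacuously. Let $e$ be the multiplicative order of $q$ modulo $r$, so $e\mid r-1$ by Lagrange and $r\mid q^{j}-1\iff e\mid j$; in particular $r\mid q^{r-1}-1$, and $r-1$ is even because $r$ is odd.

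For $L=A_n$: if $r\le n$ then $r$ occurs among $1,\dots,n$, so $r\mid n!$ and, $r$ being odd, $r\mid n!/2=|L|$; hence $r\nmid|L|$ forces $r>n$, which is $(1)$. For $L=L_n(q)$, resp. $L=U_n(q)$, the parameter $d$ of Table~\ref{tab1} divides $q-1$, resp. $q+1$, which is the $i=1$ term of $\prod_{i=1}^{n}(q^{i}-1)$, resp. $\prod_{i=1}^{n}(q^{i}-(-1)^{i})$; hence $|L|$ is a positive-integer multiple of $\prod_{i=2}^{n}(q^{i}-1)$, resp. $\prod_{i=2}^{n}(q^{i}-(-1)^{i})$, and it suffices to find $i\in\{2,\dots,n\}$ with $r\mid q^{i}-1$, resp. $r\mid q^{i}-(-1)^{i}$. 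Take $i=r-1$: it lies in $\{2,\dots,n\}$ as soon as $n\ge r-1$ (recall $r\ge3$), and since $r-1$ is even one has $q^{r-1}-(-1)^{r-1}=q^{r-1}-1$, which $r$ divides. So $n\ge r-1$ implies $r\mid|L|$, and the contrapositive gives $n\le r-2$, equivalently $r\ge n+2$; this is $(2)$ and $(3)$.

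For $L\in\{S_{2n}(q),\,O_{2n+1}(q),\,O^{+}_{2n}(q),\,O^{-}_{2n}(q)\}$ the parameter $d$ of Table~\ref{tab1} is a power of $2$, hence coprime to the odd prime $r$, so $r\mid|L|$ as soon as $r$ divides one of the displayed factors $q^{2i}-1$. The least positive $i$ with $e\mid 2i$ is at most $(r-1)/2$: it equals $e/2\le(r-1)/2$ if $e$ is even, and $e$ if $e$ is odd, in which case $e$ is a proper divisor of the even number $r-1$ and so again $e\le(r-1)/2$. Therefore, for $S_{2n}(q)$ and $O_{2n+1}(q)$, whose order involves $\prod_{i=1}^{n}(q^{2i}-1)$, the inequality $(r-1)/2\le n$ forces $r\mid|L|$; the contrapositive yields $r\ge2n+2$, and since $2n+2$ is even while $r$ is odd, in fact $r\ge2n+3$, i.e. $2n\le r-3$---this is $(4)$ and $(5)$. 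For $O^{\pm}_{2n}(q)$ the order already involves $\prod_{i=1}^{n-1}(q^{2i}-1)$, so $(r-1)/2\le n-1$ forces $r\mid|L|$; the contrapositive gives $r\ge2n$, hence $r\ge2n+1$ by parity, i.e. $2n\le r-1$---this is $(6)$. Finally $|L_n(q^2)|=\frac{1}{d}q^{n(n-1)}\prod_{i=1}^{n}(q^{2i}-1)$ with $d\mid q^{2}-1$, so $|L|$ is a positive-integer multiple of $\prod_{i=2}^{n}(q^{2i}-1)$; if $r\mid q^{2}-1$ then also $r\mid q^{4}-1$, giving the factor at index $i=2\le n$, while otherwise the least index $i$ with $r\mid q^{2i}-1$ satisfies $2\le i\le(r-1)/2$. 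Hence $(r-1)/2\le n$ forces $r\mid|L|$ in all cases (for $r=3$ this holds automatically, so $3\mid|L|$ always and $(7)$ is vacuous), and the contrapositive together with the parity step gives $2n\le r-3$; this is $(7)$.

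The argument invokes nothing beyond Fermat's theorem and Table~\ref{tab1}, and the only delicate points are arithmetic bookkeeping. One must handle the central quotient $d$ so that the chosen cyclotomic factor divides $|L|$ itself, not merely the order of a covering group---hence we absorb $d$ into the $q\mp1$ (resp. $q^{2}-1$) term in the linear, unitary and $L_n(q^2)$ cases, and note that $d$ is a power of $2$, hence $r$-coprime, in the symplectic and orthogonal cases. The sharpenings from ``$r\ge 2n+2$'' to ``$r\ge 2n+3$'' and from ``$r\ge 2n$'' to ``$r\ge 2n+1$'' rest only on the observation that an odd prime is not an even integer, and the degenerate cases (essentially $r=3$ for the symplectic, orthogonal and $L_n(q^2)$ families) are vacuous after a one-line check. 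I expect this last bookkeeping to be the main place where care is needed, though none of it is deep.
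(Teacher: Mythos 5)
Your proof is correct and takes essentially the same route as the paper's: Fermat's little theorem, the parity of $r-1$ (so that $q^{r-1}-1=q^{r-1}-(-1)^{r-1}$ covers the unitary case), and reading the order formulas from Table~\ref{tab1}. The paper compresses this into a few lines, while you make explicit the bookkeeping it leaves implicit (absorbing $d$ into a cyclotomic factor, the multiplicative order of $q$ modulo $r$, and the parity sharpenings), so there is nothing to correct.
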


\begin{proof} Since $r$ does not divides $|L|$, numbers $r$ and $q$ are coprime. Moreover,  by the Little Fermat theorem,
the order of $L$ (see Table~\ref{tab1}) is not divisible by~$q^{r-1}-1$. Now, using the parity of  $r-1$, we see that $q^{r-1}-1=q^{r-1}-(-1)^{r-1}$, and we obtain the statement of the lemma by using Table~\ref{tab1}.
\end{proof}

\begin{lemma} \label{non_parabolic_divisor}
  Assume,  $L$ is a simple classical group of Lie type satisfying one of the following conditions:
      \begin{itemize}
        \item $L=A_{n-1}(q)=L_n(q)=L_n^+(q)$, $n\geq 2$;
        \item $L={}^2A_{n-1}(q)=U_n(q)=L_n^-(q)$, $n\geq 3$;
        \item $L=B_{n}(q)=O_{2n+1}(q)$, $n\geq 3$;
        \item $L=C_{n}(q)=S_{2n}(q)$, $n\geq 2$;
        \item $L=D_{n}(q)=O^+_{2n}(q)$, $n\geq 4$;
        \item $L={}^2D_{n}(q)=O^-_{2n}(q)$, $n\geq 4$.
      \end{itemize}
       Suppose that an odd prime  $r$ does not divides the order of a maximal parabolic subgroup of~$L$.
Then one of the following statements hold:
\begin{itemize}
 \item[$(1)$]  $L=L_n(q)=L_n^+(q)$ and $r\geq \left[\displaystyle\frac{n+1}{2}\right]+2\geq\displaystyle\frac{n+4}{2}$;
 \item[$(2)$]  $L=U_n(q)=L_n^-(q)$ and $r\geq \left[\displaystyle\frac{n+1}{2}\right]+2\geq\displaystyle\frac{n+4}{2}$;
 \item[$(3)$]  $L=S_{2n}(q)$ and $r\geq  \displaystyle\frac{2n+7}{3}$;
 \item[$(4)$]  $L=O_{2n+1}(q)$ and $r\geq  \displaystyle\frac{2n+7}{3}$;
 \item[$(5)$]  $L=O^+_{2n}(q)$  and $r\geq  \displaystyle\frac{2n+5}{3}$;
 \item[$(6)$]  $L=O^-_{2n}(q)$ and $r\geq  \displaystyle\frac{2n+5}{3}$.
 \end{itemize}
\end{lemma}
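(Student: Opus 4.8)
The plan is to reduce to the case where $r$ is coprime to $q$ and divides $|L|$, and then to run through the six families, in each case reading off the prime divisors of the orders of the maximal parabolic subgroups from their Levi decompositions and Table~\ref{tab1}. First I would dispose of two cases: if $r=p$, every maximal parabolic subgroup of $L$ has order divisible by $p$, so the hypothesis fails and there is nothing to prove; and if $r\nmid|L|$, the hypothesis holds automatically and the asserted bounds are immediate from Lemma~\ref{r_not divi}. So from now on $r\ne p$ and $r\mid|L|$. Let $e$ be the multiplicative order of $q$ modulo $r$, so that $r\mid q^i-1$ precisely when $e\mid i$, and $e\mid r-1$. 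Since every maximal parabolic subgroup of a classical group has a Levi factor with a direct factor isomorphic, modulo a central subgroup of order prime to $r$, to $\GL_k(q)$ or $\GL_k(q^2)$ with $k\ge1$, and $(q-1)\mid|\GL_k(q)|$, the case $e=1$ would contradict the hypothesis; hence $e\ge2$. The key elementary observation, which I would use repeatedly, is the dichotomy: if $e$ is even then $r\ge e+1$, whereas if $e$ is odd --- so $e\ge3$ --- then $e+1$ is an even number greater than $2$, hence not prime, and since $r\equiv1\pmod e$ we get $r\ge2e+1$. This is what prevents the odd-$e$ subcases from being weaker than stated.

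Next I would record the arithmetic inputs for the Levi factors. Writing $i_0=e/\gcd(e,2)$ for the order of $q^2$ modulo $r$, and $\bar e$ for the order of $-q$ modulo $r$ --- a short cyclotomic computation gives $\bar e=2e$, $\bar e=e/2$, $\bar e=e$ according as $e$ is odd, $e\equiv2\pmod 4$, or $e\equiv0\pmod 4$ --- and using that $r$ is odd, Table~\ref{tab1} gives: $r$ divides $|\GL_k(q^2)|$, $|\Sp_{2k}(q)|$ or $|O_{2k+1}(q)|$ iff $i_0\le k$; $r$ divides $|U_k(q)|$ iff $\bar e\le k$; $r\nmid|O^+_{2k}(q)|$ iff ($e>k$ for $e$ odd, $e\ge2k$ for $e$ even); and $r\nmid|O^-_{2k}(q)|$ iff ($e\ge k$ for $e$ odd, $e\ge2k+2$ for $e$ even).

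Then I would go through the families. In each, the hypothesis gives a maximal parabolic $P$ with $r\nmid|P|$, which forces a linear inequality relating $e$ (or $i_0$ and $\bar e$) to $n$; the dichotomy then turns it into the stated bound. For $L=L_n(q)$ the maximal parabolics are the stabilizers $P_k$ ($1\le k\le n-1$) of $k$-subspaces, with Levi factor $\GL_k(q)\times\GL_{n-k}(q)$ modulo a central $r'$-subgroup, so $r\mid|P_k|$ iff $e\le\max(k,n-k)$; hence $e\ge\lceil n/2\rceil+1$ and $r\ge\lceil n/2\rceil+2=\left[\frac{n+1}{2}\right]+2$. For $L=U_n(q)$ the maximal parabolics are the $P_k$ ($1\le k\le\lfloor n/2\rfloor$) with Levi factor $\GL_k(q^2)\times U_{n-2k}(q)$, so $r\nmid|P_k|$ forces $i_0\ge k+1$ and $\bar e\ge n-2k+1$, whence $2i_0+\bar e\ge n+3$; substituting the three values of $\bar e$ and $i_0$ and applying the dichotomy gives $r\ge\left[\frac{n+1}{2}\right]+2$ again. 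For $L=S_{2n}(q)$ or $L=O_{2n+1}(q)$, $P_k$ ($1\le k\le n$) has Levi factor $\GL_k(q)\times\Sp_{2(n-k)}(q)$, respectively $\GL_k(q)\times O_{2(n-k)+1}(q)$, so $r\nmid|P_k|$ forces $e>k$ and $i_0>n-k$, and the two parity cases ($i_0=e$ with $r\ge2e+1$, or $i_0=e/2$ with $r\ge e+1$) both give $r\ge\frac{2n+7}{3}$. Finally, for $L=O^+_{2n}(q)$ the maximal parabolics are the $P_k$ ($1\le k\le n-2$) with Levi factor $\GL_k(q)\times O^+_{2(n-k)}(q)$, together with the two Siegel parabolics, each with Levi factor $\GL_n(q)$; the Siegel case forces $e\ge n+1$, hence $r\ge n+2$, while $r\nmid|P_k|$ forces $e>k$ and ($e>n-k$ for $e$ odd, $e\ge2(n-k)$ for $e$ even), giving $r\ge\frac{2n+5}{3}$ in every case. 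The group $O^-_{2n}(q)$ is the same, with parabolics $P_k$ ($1\le k\le n-1$), Levi factor $\GL_k(q)\times O^-_{2(n-k)}(q)$, and the $O^-$ divisibility criterion in place of the $O^+$ one; the outcome is again $r\ge\frac{2n+5}{3}$.

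The hard part will be the bookkeeping in the unitary and minus-type orthogonal families: identifying the Levi factors of the maximal parabolics precisely --- especially the two Siegel parabolics of $O^+_{2n}(q)$ and the low-dimensional end-factors $O^\pm_2(q)$, $U_1(q)$, $U_0(q)=1$ --- and keeping careful track of how the orders of $q$, $-q$ and $q^2$ modulo $r$ interact, in particular the split between $e\equiv2\pmod 4$ and $e\equiv0\pmod 4$. That split is exactly what separates the bound $\frac{2n+5}{3}$ of the minus-type orthogonal groups from the bound $\frac{2n+7}{3}$ of the symplectic and plus-type groups, so it must be handled with care. Beyond this --- lengthy but routine --- case analysis there is no conceptual difficulty, since in every family the hypothesis reduces to a linear inequality in $e$ (or in $i_0$, $\bar e$) and $n$, and the dichotomy $r\ge e+1$ / $r\ge2e+1$ converts it into the desired lower bound for $r$.
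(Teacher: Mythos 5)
Your proposal is sound in its arithmetic core, but it takes a genuinely different and much heavier route than the paper. The paper never introduces the order $e$ of $q$ modulo $r$, the orders of $-q$ or $q^2$, or the dichotomy $r\ge e+1$ versus $r\ge 2e+1$. It only observes that the components of the Levi factor of the offending parabolic $P$ are sections of $P$, so $r$ cannot divide their orders, and then invokes Lemma~\ref{r_not divi} --- whose proof is nothing but Fermat's little theorem applied to the order formulas of Table~\ref{tab1} --- to get $r\ge m+2$, $r\ge 2m+3$, $r\ge 2m+1$ for the two component parameters; a one-line averaging inequality such as $3\max\{m,2(n-m)+1\}\ge 2n+1$ then yields each stated bound. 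In other words, the crude estimate $e\le r-1$ already suffices, and your even/odd-$e$ and $e\bmod 4$ bookkeeping (which I checked, and which does deliver the stated bounds in every family, granting your divisibility transfer) buys nothing at this level of precision; its only payoff would be sharper constants. You also identify the parabolics geometrically (subspace stabilizers, Siegel parabolics) where the paper reads them off the Dynkin diagrams; both lists agree.

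One step of yours is not right as stated, and it is exactly where the delicate cases hide: the claim that the Levi factor of a maximal parabolic of the \emph{simple} group $L$ has, modulo a central subgroup of order prime to $r$, a direct factor $\GL_k(q)$ or $\GL_k(q^2)$, and more generally your transfer of ``$r\nmid|P|$'' into non-divisibility of the orders of the full Levi factors $\GL_k(q)$, $\GL_k(q^2)$, $\GU_m(q)$. In the linear and unitary cases the central subgroups removed in passing to $L$ have order divisible by $q-1$, respectively $q+1$, which may be divisible by $r$ (precisely the cases $e=1$ and $\bar{e}=1$), so this transfer needs an argument; what is automatic is only that the quasisimple components of the Levi, being subgroups of $P$, have order prime to $r$, after which the degenerate small factors ($\GL_1$, $\GU_1$, $\GU_2$, $O_2^{\pm}$, split tori) must be handled separately. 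This is not pedantry: for $L=U_3(5)$ the maximal parabolic is $5^{1+2}{:}8$ of order $1000$, prime to $r=3$, although $3$ divides $q^2-1$; so at the smallest unitary rank the implication you assert fails --- and in fact conclusion $(2)$ of the lemma fails there too, the paper's own proof being equally defective at this point, since it applies Lemma~\ref{r_not divi} to ``components'' that are trivial. For the ranks where the lemma is actually used later ($n\ge 12$ for linear and unitary groups, $n\ge 6$ otherwise) the $r$-valuation of the Levi comfortably survives the central quotient and both your argument and the paper's go through; a complete write-up should either restrict the rank or settle these boundary cases explicitly rather than hide them in the phrase ``modulo a central subgroup of order prime to $r$''.
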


\begin{proof} Denote by  $P$ a maximal parabolic subgroup of~$L$ of order not divisible by~$r$. The Levi factor of $P$ has at most two components, corresponding to the connected components of the Dynkin diagram with one node removed. The number $r$ does not divide the orders of the components. Assume that we remove a root $r_m$ for  $A_{n-1}(q)$, $B_n(q)$, $C_n(q)$, and $D_n(q)$, or the root  $r_m^1$ for ${}^2A_{n-1}(q)$, and ${}^2D_{n-1}(q)$ on pic.~\ref{p90}--\ref{p92}\footnote{In view of symmetry reasons we may assume that  $m\ne n$ in case $D_n(q)$.}. Then we obtain one of the following cases\footnote{We agree that $O_6^+(q)=D_3(q)=A_3(q)=L_4(q)$, $O_4^+(q)=D_2(q)=A_1(q)\times A_1(q)=L_2(q)\times L_2(q)$, $O_6^-(q)={}^2D_3(q)={}^2A_3(q)=U_4(q)$, $O_4^-(q)={}^2D_2(q)=A_1(q^2)=L_2(q^2)$. }:
 \begin{itemize}
   \item[(a)] $L=A_{n-1}(q)=L_n(q)$ and $r$ does not divide the orders of $A_{m-1}(q)=L_m(q)$ and $A_{n-1-m}(q)=L_{n-m}(q)$, where $1\leq m \leq n-1$;
   \item[(b)] $L={}^2A_{n-1}(q)=U_n(q)$ and $r$ does not divide the orders of $A_{m-1}(q^2)=L_m(q^2)$ and ${}^2A_{n-1-2m}(q)=U_{n-2m}(q)$, where $1\leq m \leq [n/2]$;
   \item[(c)] $L=B_n(q)=O_{2n+1}(q)$ and $r$ does not divide the orders of  $A_{m-1}(q)=L_m(q)$ and $B_{n-m}(q)=O_{2(n-m)+1}(q)$, where $1\leq m \leq n-1$;
   \item[(d)] $L=C_n(q)=S_{2n}(q)$ and $r$ does not divide the orders of $A_{m-1}(q)=L_m(q)$ and $C_{n-m}(q)=S_{2(n-m)}(q)$, where $1\leq m \leq n-1$;
   \item[(e)] $L=D_n(q)=O^+_{2n}(q)$ and $r$ does not divides the orders of $A_{m-1}(q)=L_m(q)$ and $D_{n-m}(q)=O^+_{2(n-m)}(q)$, where $1\leq m \leq n-1$;
   \item[(f)] $L={}^2D_n(q)=O^-_{2n}(q)$ and $r$ does not divide the orders of $A_{m-1}(q)=L_m(q)$ and ${}^2D_{n-m}(q)=O^-_{2(n-m)}(q)$, where $1\leq m \leq n-1$.
 \end{itemize}

Consider all these cases separately.

\begin{figure}
\begin{center}
\unitlength=8mm \begin{picture}(4,1)(3,0)
   \put(0,0){\circle {0.2}}   \put(0,0.5){\makebox(0,0){$r_1$}}
   \put(2,0){\circle {0.2}}  \put(2,0.5){\makebox(0,0){$r_2$}}
   \put(4,0){\circle {0.2}}   \put(4,0.5){\makebox(0,0){$r_3$}}
   \put(6,0){\circle {0.2}}   \put(6,0.5){\makebox(0,0){$r_{n-2}$}}
   \put(8,0){\circle {0.2}}   \put(8,0.5){\makebox(0,0){$r_{n-1}$}}
 \put(0.1,0){\line(1,0){1.8}}
 \put(2.1,0){\line(1,0){1.8}}
\put(4.1,0){\line(1,0){0.5}}
\put(5.4,0){\line(1,0){0.5}}
\put(5,0){\makebox(0,0){$\cdots$}}
\put(6.1,0){\line(1,0){1.8}}
\end{picture}
\caption{\label {p90}   
$A_n(q)$. }
\end{center}
\end{figure}

\vspace{8cm}

\begin{figure}
\begin{center}
\unitlength=8mm \begin{picture}(4,5)(3,0)
   \put(0,5){\circle {0.2}}   \put(0,5.5){\makebox(0,0){$r_1$}}
   \put(2,5){\circle {0.2}}  \put(2,5.5){\makebox(0,0){$r_2$}}
   \put(4,5){\circle {0.2}}   \put(4,5.5){\makebox(0,0){$r_3$}}
   \put(6,5){\circle {0.2}}   \put(6.5,5.5){\makebox(0,0){$r_{k-1}$}}
   \put(8,5){\circle {0.2}}   \put(8.5,5.5){\makebox(0,0){$r_{k}$}}
 \put(0.1,5){\line(1,0){1.8}}
 \put(2.1,5){\line(1,0){1.8}}
\put(4.1,5){\line(1,0){0.5}}
\put(5.4,5){\line(1,0){0.5}}
\put(5,5){\makebox(0,0){$\cdots$}}
\put(6.1,5){\line(1,0){1.8}}
   \put(0,3){\circle {0.2}}   \put(0,3.5){\makebox(0,0){$r_{n-1}$}}
   \put(2,3){\circle {0.2}}  \put(2,3.5){\makebox(0,0){$r_{n-2}$}}
   \put(4,3){\circle {0.2}}   \put(4,3.5){\makebox(0,0){$r_{n-3}$}}
   \put(6,3){\circle {0.2}}   \put(6.5,3.5){\makebox(0,0){$r_{k+2}$}}
   \put(8,3){\circle {0.2}}   \put(8.5,3.5){\makebox(0,0){$r_{k+1}$}}
 \put(0.1,3){\line(1,0){1.8}}
 \put(2.1,3){\line(1,0){1.8}}
\put(4.1,3){\line(1,0){0.5}}
\put(5.4,3){\line(1,0){0.5}}
\put(5,3){\makebox(0,0){$\cdots$}}
\put(6.1,3){\line(1,0){1.8}}
\put(8,4.9){\line(0,-1){1.8}}

\put(4,2.7){\vector(0,-1){1.7}}
   \put(0,0){\circle {0.2}}   \put(0,0.5){\makebox(0,0){$r^1_1$}}
   \put(2,0){\circle {0.2}}  \put(2,0.5){\makebox(0,0){$r^1_2$}}
   \put(4,0){\circle {0.2}}   \put(4,0.5){\makebox(0,0){$r^1_3$}}
   \put(6,0){\circle {0.2}}   \put(6,0.5){\makebox(0,0){$r^1_{k-1}$}}
   \put(8,0){\circle {0.2}}   \put(8,0.5){\makebox(0,0){$r^1_{k}$}}
 \put(0.1,0){\line(1,0){1.8}}
 \put(2.1,0){\line(1,0){1.8}}
\put(4.1,0){\line(1,0){0.5}}
\put(5.4,0){\line(1,0){0.5}}
\put(5,0){\makebox(0,0){$\cdots$}}
\put(6,0.1){\line(1,0){2}}
\put(6,-0.1){\line(1,0){2}}

\end{picture}
\caption{\label {p901}   
${}^2A_{n-1}(q)$, where $n-1=2k$, $k=\left[\displaystyle\frac{n}{2}\right]$. }
\end{center}
\end{figure}

\begin{figure}
\begin{center}
\unitlength=8mm \begin{picture}(4,7)(3,0)
   \put(0,5){\circle {0.2}}   \put(0,5.5){\makebox(0,0){$r_1$}}
   \put(2,5){\circle {0.2}}  \put(2,5.5){\makebox(0,0){$r_2$}}
   \put(4,5){\circle {0.2}}   \put(4,5.5){\makebox(0,0){$r_3$}}
   \put(6,5){\circle {0.2}}   \put(6.2,5.5){\makebox(0,0){$r_{k-1}$}}
 \put(0.1,5){\line(1,0){1.8}}
 \put(2.1,5){\line(1,0){1.8}}
\put(4.1,5){\line(1,0){0.5}}
\put(5.4,5){\line(1,0){0.5}}
\put(5,5){\makebox(0,0){$\cdots$}}
\put(6.1,4.95){\line(2,-1){1.8}}
   \put(0,3){\circle {0.2}}   \put(0,3.5){\makebox(0,0){$r_{n-1}$}}
   \put(2,3){\circle {0.2}}  \put(2,3.5){\makebox(0,0){$r_{n-2}$}}
   \put(4,3){\circle {0.2}}   \put(4,3.5){\makebox(0,0){$r_{n-3}$}}
   \put(6,3){\circle {0.2}}   \put(6.2,3.5){\makebox(0,0){$r_{k+1}$}}
   \put(8,4){\circle {0.2}}   \put(8,4.5){\makebox(0,0){$r_{k}$}}
 \put(0.1,3){\line(1,0){1.8}}
 \put(2.1,3){\line(1,0){1.8}}
\put(4.1,3){\line(1,0){0.5}}
\put(5.4,3){\line(1,0){0.5}}
\put(5,3){\makebox(0,0){$\cdots$}}
\put(6.1,3.05){\line(2,1){1.8}}
\put(4,2.7){\vector(0,-1){1.7}}
   \put(0,0){\circle {0.2}}   \put(0,0.5){\makebox(0,0){$r^1_1$}}
   \put(2,0){\circle {0.2}}  \put(2,0.5){\makebox(0,0){$r^1_2$}}
   \put(4,0){\circle {0.2}}   \put(4,0.5){\makebox(0,0){$r^1_3$}}
   \put(6,0){\circle {0.2}}   \put(6,0.5){\makebox(0,0){$r^1_{k-1}$}}
   \put(8,0){\circle {0.2}}   \put(8,0.5){\makebox(0,0){$r^1_{k}$}}
 \put(0.1,0){\line(1,0){1.8}}
 \put(2.1,0){\line(1,0){1.8}}
\put(4.1,0){\line(1,0){0.5}}
\put(5.4,0){\line(1,0){0.5}}
\put(5,0){\makebox(0,0){$\cdots$}}
\put(6,0.1){\line(1,0){2}}
\put(6,-0.1){\line(1,0){2}}

\end{picture}
\caption{\label{p902}
${}^2A_{n-1}(q)$, where $n-1=2k-1$, $k=\left[\displaystyle\frac{n}{2}\right]$. }
\end{center}
\end{figure}
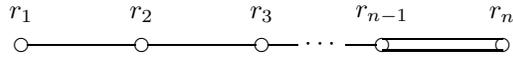
\begin{figure}
\begin{center}
\unitlength=8mm \begin{picture}(9,2)

  \put(0,0){\circle {0.2}}   \put(0,0.5){\makebox(0,0){$r_1$}}
   \put(2,0){\circle {0.2}}  \put(2,0.5){\makebox(0,0){$r_2$}}
   \put(4,0){\circle {0.2}}   \put(4,0.5){\makebox(0,0){$r_3$}}
   \put(6,0){\circle {0.2}}   \put(6,0.5){\makebox(0,0){$r_{n-1}$}}
   \put(8,0){\circle {0.2}}   \put(8,0.5){\makebox(0,0){$r_{n}$}}
 \put(0.1,0){\line(1,0){1.8}}
 \put(2.1,0){\line(1,0){1.8}}
\put(4.1,0){\line(1,0){0.5}}
\put(5.4,0){\line(1,0){0.5}}
\put(5,0){\makebox(0,0){$\cdots$}}
\put(6,0.1){\line(1,0){2}}
\put(6,-0.1){\line(1,0){2}}

\end{picture}
\caption{\label {p903}   
$B_{n}(q)$ and $C_{n}(q)$. }
\end{center}
\end{figure}

\begin{figure}
\begin{center}
\unitlength=8mm \begin{picture}(9,4)
   \put(0,2){\circle {0.2}}   \put(0,2.5){\makebox(0,0){$r_1$}}
   \put(2,2){\circle {0.2}}   \put(2,2.5){\makebox(0,0){$r_2$}}
   \put(4,2){\circle {0.2}}   \put(4,2.5){\makebox(0,0){$r_{l-3}$}}
   \put(6,2){\circle {0.2}}
 \put(5.8,2.5){\makebox(0,0){$r_{n-2}$}} \put(8,4){\circle
{0.2}} \put(8.63,4){\makebox(0,0){$r_{n-1}$}} \put(8,0){\circle
{0.2}} \put(8.5,0){\makebox(0,0){$r_n$}}
 \put(0.1,2){\line(1,0){1.8}}
\put(2.1,2){\line(1,0){0.5}}
\put(3.4,2){\line(1,0){0.5}}
\put(3,2){\makebox(0,0){$\cdots$}}
\put(4.1,2){\line(1,0){1.8}}
\put(6.08,2.08){\line(1,1){1.83}}
\put(6.08,1.92){\line(1,-1){1.83}}
\end{picture}
\caption{\label {p91}  
$D_n(q)$.}
\end{center}
\end{figure}

\begin{figure} \begin{center} \unitlength=9mm
\begin{picture}(9,6) \put(0,4){\circle {0.2}}
   \put(0,4.5){\makebox(0,0){$r_1$}} \put(2,4){\circle {0.2}}
   \put(2,4.5){\makebox(0,0){$r_2$}} \put(4,4){\circle {0.2}}
 \put(4,4.5){\makebox(0,0){$r_{n-3}$}} \put(6,4){\circle {0.2}}
   \put(5.8,4.5){\makebox(0,0){$r_{n-2}$}} \put(8,6){\circle
   {0.2}}   \put(8.7,6){\makebox(0,0){$r_{n-1}$}}
   \put(8,2){\circle {0.2}}
   \put(8.5,2){\makebox(0,0){$r_n$}}
 \put(0.1,4){\line(1,0){1.8}}
\put(2.1,4){\line(1,0){0.5}}
\put(3.4,4){\line(1,0){0.5}}
\put(3,4){\makebox(0,0){$\cdots$}}
\put(4.1,4){\line(1,0){1.8}}
\put(6.08,4.08){\line(1,1){1.83}}
\put(6.08,3.92){\line(1,-1){1.83}}
\put(4,3){\vector(0,-1){1.9}}
   \put(0,0){\circle {0.2}}   \put(0,0.5){\makebox(0,0){$r_1^1$}}
   \put(2,0){\circle {0.2}}   \put(2,0.5){\makebox(0,0){$r_2^1$}}
   \put(4,0){\circle {0.2}}
\put(4,0.5){\makebox(0,0){$r_{n-3}^1$}} \put(6,0){\circle {0.2}}
 \put(5.8,0.5){\makebox(0,0){$r_{n-2}^1$}} \put(8,0){\circle
{0.2}} \put(8,0.5){\makebox(0,0){$r_{n-1}^1$}}
 \put(0.1,0){\line(1,0){1.8}}
\put(2.1,0){\line(1,0){0.5}}
\put(3.4,0){\line(1,0){0.5}}
\put(3,0){\makebox(0,0){$\cdots$}}
\put(4.1,0){\line(1,0){1.8}}
\put(6,0.1){\line(1,0){2}}
\put(6,-0.1){\line(1,0){2}}
\end{picture}
\caption{\label {p92}
${^2D_n(q)}$.}
\end{center}
\end{figure}

\medskip

C~a~s~e~~(a). We have  $2\max\{m,n-m\}\geq m+(n-m)=n$. So by Lemma~\ref{r_not divi} the inequality
$$
r-2\geq\max\{m,n-m\}\geq n/2,
$$
holds, whence the statement~(1) of the lemma.

\medskip

C~a~s~e~~(b). Notice that if the order of  $L_m(q^2)$ is not divisible by $r$, then by Lemma~\ref{r_not divi} we obtain $r-2>r-3\geq 2m$.
Whence applying Lemma~\ref{r_not divi} to $U_{n-2m}(q)$ we obtain the inequality
$$
r-2\geq\max\{2m,n-2m\}\geq n/2,
$$ and as in Case (a), substituting  $m$ by  $2m$, we obtain item~(2) of the lemma.

\medskip

C~a~s~e~s~~(c,d). Since the orders of   $S_{2k}(q)$ and $O_{2k+1}(q)$ are equal, it is enough to consider case  (d). The inequality  $$3\max\{m,2(n-m)+1\}\geq 2m+2(n-m)+1=2n+1$$ and Lemma~\ref{r_not divi} imply
$$
r\geq \max\{m+2,2(n-m)+3\}=\max\{m,2(n-m)+1\}+2\geq \frac{2n+1}{3}+2=\frac{2n+7}{3},
$$
whence item~(3) (or (4) in case (c)) of the lemma holds.

\medskip

C~a~s~e~s~~(e,f). Since  $3\max\{m+1,2(n-m)\}\geq 2(m+1)+2(n-m)=2n+2$, by Lemma~\ref{r_not divi} the inequality
$$
r\geq \max\{m+2,2(n-m)+1\}=\max\{m+1,2(n-m)\}+1\geq \frac{2n+2}{3}+1=\frac{2n+5}{3}
$$
holds. Whence items~(5) and (6)  of the lemma hold.
\end{proof}

\begin{lemma} \label{non_non-degenerated_divisor}
Let  $L$ be isomorphic to $U_n(q)$ and $n\geq 12$, or to  $S_{2n}(q)$, $O_{2n+1}(q)$, $O_{2n}^+(q)$, or $O_{2n}^-(q)$ and $n\geq 6$. Assume an odd prime  $r$ does not divide the order of the stabilizer in  $L$ of a nondegenerate subspace  $U$ of the underlying space  $V$. Then one of the following statements hold:
\begin{itemize}
 \item[$(1)$]  $L=U_n(q)$ and $r\geq \left[\displaystyle\frac{n+1}{2}\right]+2\geq\displaystyle\frac{n+4}{2}$;
 \item[$(2)$]  $L=S_{2n}(q)$ and $r\geq 2\left[\displaystyle\frac{n+1}{2}\right]+3\geq  n+3$;
 \item[$(3)$]  $L=O_{2n+1}(q)$ and $r\geq  2\left[\displaystyle\frac{n+1}{2}\right]+1\geq n+1$;
 \item[$(4)$]  $L=O^+_{2n}(q)$ or  $L=O^-_{2n}(q)$  and $r\geq  2\left[\displaystyle\frac{n+1}{2}\right]+1\geq n+1$.
 \end{itemize}
\end{lemma}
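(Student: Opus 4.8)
The plan is to use the general fact that the stabilizer in $L$ of a nondegenerate subspace contains a large classical section, and then to invoke Lemma~\ref{r_not divi}. Let $V$ be the natural module for $L$: a Hermitian space over $\mathbb{F}_{q^2}$ when $L=U_n(q)$, a symplectic space over $\mathbb{F}_q$ when $L=S_{2n}(q)$, and a nondegenerate quadratic space over $\mathbb{F}_q$ in the orthogonal cases. Let $U^\perp$ be the orthogonal complement of the given subspace $U$, so that $V=U\perp U^\perp$ with both summands nondegenerate. Because $U^\perp$ is recovered from $U$ and the form, the stabilizers of $U$ and of $U^\perp$ in $L$ coincide; interchanging them if necessary, I may assume $\dim U\leq\dim U^\perp$, so that $\dim U^\perp\geq\lceil(\dim V)/2\rceil$, which under the hypotheses on $n$ is at least $6$ (hence at least $7$ when it is odd).

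The key observation is that the subgroup of the isometry group of $V$ fixing $U$ pointwise and acting naturally on $U^\perp$ --- namely $\SU(U^\perp)$, $\Sp(U^\perp)$ or $\Omega(U^\perp)$ according to the type of $L$ --- lies in $\SL(V)$, $\Sp(V)$ or $\Omega(V)$ respectively (in the orthogonal case one checks that determinant and spinor norm are compatible with orthogonal direct sums), and maps injectively into $L$, since an element fixing $U$ pointwise cannot equal a nontrivial scalar of $V$. Thus $\mathrm{Stab}_L(U)$ contains an isomorphic copy of this quasisimple group, so the simple classical group $S(U^\perp)$ acting on $U^\perp$, being a quotient of it, has order dividing $|\mathrm{Stab}_L(U)|$. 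Hence, if $r\nmid|\mathrm{Stab}_L(U)|$ then $r\nmid|S(U^\perp)|$; and since $\dim U^\perp\geq 6$ the group $S(U^\perp)$ is simple, so Lemma~\ref{r_not divi} applies to it.

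It then remains to substitute $\dim U^\perp\geq\lceil(\dim V)/2\rceil$ into Lemma~\ref{r_not divi}. For $L=U_n(q)$ we have $S(U^\perp)\cong U_{\dim U^\perp}(q)$, so Lemma~\ref{r_not divi}(3) gives $\dim U^\perp\leq r-2$ and hence $r\geq\lceil n/2\rceil+2=\lfloor(n+1)/2\rfloor+2$, which is~(1). For $L=S_{2n}(q)$, $U^\perp$ is symplectic of dimension $2k$ with $k\geq\lceil n/2\rceil$, so Lemma~\ref{r_not divi}(4) gives $2k\leq r-3$ and hence $r\geq 2\lceil n/2\rceil+3=2\lfloor(n+1)/2\rfloor+3$, which is~(2). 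For the orthogonal groups one argues according to the parity of $\dim U^\perp$: if it is even then $S(U^\perp)$ has type $D$ or ${}^2D$ and Lemma~\ref{r_not divi}(6) applies, and if it is odd then $S(U^\perp)$ has type $B$ and Lemma~\ref{r_not divi}(5) applies; in either subcase the estimate $\dim U^\perp\geq\lceil(\dim V)/2\rceil$ gives~(3) and~(4). When $q$ is even and $L$ is orthogonal the nondegenerate subspaces in play are even-dimensional, while an odd-dimensional nondegenerate quadratic subspace is contained in a nonsingular-point stabilizer $O_{2n-1}(q)\cong\Sp_{2n-2}(q)$, whence the symplectic estimate again yields at least the stated bound.

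I expect the only real obstacle to be the orthogonal bookkeeping: confirming that the chosen quasisimple subgroup genuinely lies in $\Omega(V)$ and embeds in $L=\PO(V)$ (the determinant, spinor-norm and scalar checks), and keeping track of the sign of $U^\perp$, the parity of $\dim U^\perp$, and the characteristic-two peculiarities of nondegenerate quadratic subspaces; in the unitary and symplectic cases the argument above is essentially complete.
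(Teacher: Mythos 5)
Your proposal is correct and follows essentially the same route as the paper: the stabilizer of $U$ also stabilizes $U^\perp$, it contains the classical group acting on the (without loss of generality larger) summand, so $r$ cannot divide that group's order, and Lemma~\ref{r_not divi} plus the estimate $\max\{\dim U,\dim U^\perp\}\geq\frac{1}{2}\dim V$ gives the stated bounds; the paper merely phrases this as a case-by-case application of Lemma~\ref{r_not divi} to both summands followed by taking the maximum, while you apply it only to the larger one, which is arithmetically equivalent. Your extra care about the quasisimple subgroup $\SU(U^\perp)$, $\Sp(U^\perp)$, $\Omega(U^\perp)$ embedding into $L$ (and the sign/parity/characteristic-two bookkeeping in the orthogonal case) is exactly the routine verification the paper leaves implicit and poses no obstacle.
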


\begin{proof}
Since an element of $L$ stabilizing a nondegenerate subspace  $U$ also stabilizes the (nondegenerate) orthogonal complement  $U^\perp$,
replacing, if necessary, $U$ with $U^\perp$,
we can assume  that for some  $m<n$ one of the following cases appear:
 \begin{itemize}
   \item[(a)] $L=U_n(q)$, $\dim U=m$ and $r$ does not divide the orders of  $U_m(q)$ and $U_{n-m}(q)$, where $1\leq m \leq n-1$;
   \item[(b)] $L=S_{2n}(q)$, $\dim U=2m$ and $r$ does not divide the orders of $S_{2m}(q)$ and $S_{2(n-m)}(q)$, where $1\leq m \leq n-1$;
   \item[(c)] $L=O_{2n+1}(q)$, $\dim U=2m$ and  $r$ does not divide the orders  of  $O^\pm_{2m}(q)$ and $O_{2(n-m)+1}(q)$, where $1\leq m \leq n-1$;
   \item[(d)] $L=O^\pm_{2n}(q)$, $\dim U=2m+1$ and  $r$ does not divide the orders of  $O_{2m+1}(q)$ and $O_{2(n-m-1)+1}(q)$, where  $1\leq m \leq n-1$;
   \item[(e)] $L=O^\varepsilon_{2n}(q)$,  $\varepsilon\in\{+,-\}$, $\dim U=2m$, ${\rm sgn}\,U=\delta\in\{+,-\}$  and $r$ does not divide the orders of  $O^\delta_{2m}(q)$ and $O^{\varepsilon\delta}_{2(n-m)}(q)$, where $1\leq m \leq n-2$.
 \end{itemize}

Consider all these cases separately.

\medskip

C~a~s~e~~(a). Since by Lemma~\ref{r_not divi} $$r-2\geq \max\{m,n-m\},$$ as in case  (a) in the proof of Lemma~\ref{non_parabolic_divisor}, we obtain item~(1) of the lemma.

\medskip

C~a~s~e~~(b). By Lemma~\ref{r_not divi}
$$r-3\geq \max\{2m,2(n-m)\}= 2\max\{m,n-m\}\geq 2\left[\displaystyle\frac{n+1}{2}\right]\geq n,$$
and item~(2) of the lemma follows.

\medskip

C~a~s~e~~(c). By Lemma~\ref{r_not divi}
$$r\geq \max\{2m+1,2(n-m-1)+3\}= 2\max\{m,n-m\}+1\geq 2\left[\displaystyle\frac{n+1}{2}\right]+1\geq n+1,$$
and item~(3) of the lemma holds.

\medskip

C~a~s~e~~(d). By Lemma~\ref{r_not divi}
$$r\geq \max\{2m+3,2(n-m-1)+3\}= 2\max\{m,(n-1)-m\}+3\geq 2\left[\displaystyle\frac{n}{2}\right]+3\geq n-1+3=n+2.$$
Since $n+2>n+1$, we obtain item~(4) of the lemma.

\medskip

C~a~s~e~~(e). By Lemma~\ref{r_not divi}
$$r\geq \max\{2m+1,2(n-m)+1\}= 2\max\{m,n-m\}+1\geq 2\left[\displaystyle\frac{n+1}{2}\right]+1\geq n+1,$$
and item~(4) of the Lemma holds.
\end{proof}

\section{Proof of Proposition~\ref{ex1}}

Let  $T$ be a set of transposition in $S_r$. Consider a graph  $\Gamma$ with the vertex set $\Omega=\{1,\dots,r\}$, where two different vertices  $i,j$ are adjacent if and only if $(ij)\in T$. If  $\Delta_1,\dots, \Delta_k$ are the connected components of $\Gamma$, then clearly  $$\langle T\rangle\leq\Sym\Delta_1\times\dots\times \Sym\Delta_k.$$ Now, if   $m=|T|<r-1$, then $\Gamma$ is disconnected and each $\Sym\Delta_i$ is a $\pi$-group (recall that $r=\min \pi'$). Therefore  $\langle T\rangle$ is a  $\pi$-group.

So for every proper subset $\pi$ of the set of all primes and for $r=\min \pi'$ every  $m<r-1$ transpositions generate a $\pi$-subgroup in $S_r$, while~$\Oo_\pi(S_r)=1$.
\qed

\section{Proof of Proposition~\ref{beta_A_n_prop}}

Notice that  $(12\dots r)=(12)(13)\dots(1r)$. Whence if  $x$ is a transposition, then  $\beta_{r,L}(x)\leq r-1.$ On the other hand, Proposition~\ref{ex1} shows that for a transposition $x$ the inequality $\beta_{r}(x,L)\geq r-1.$ holds. So item (1) of the proposition holds.

Now we consider $A_6$ and an involution $x$ lying outside of $S_6$. It is known $L=A_6\cong L_2(9)$ and
$x$ is a diagonal automorphism of $L_2(9)$. By Lemma \ref{alpha_classic} for $r\in \{3,5\}$ we have
$$\beta_{r}(x,L)\leq\alpha(x,L)=3.$$ So for  $r=5$
$$\beta_{r}(x,L)\leq 3<4=r-1,$$  therefore item (3) of the proposition for $A_6$ and $x$ holds. In order to prove item (2) of the proposition we need to show that $\beta_3(x,L)=3$. Since $\beta_3(x,L)\leq \alpha(x,L)=3$, we remain to show that $\beta_{3}(x,L)\ne 2$.

Assume that  $y\in x^L$ is  such that the order of $D=\langle x,y\rangle$ is divisible by 3. Since $D$ is a dihedral group, this means that $x$ inverts an element of order $3$ in  $\langle xy\rangle\leq L$. The group $L=A_6$ is known to contain exactly two classes of conjugate elements of order  3 with representatives $(123)$ and $(123)(456)$. It follows that every element of order $3$ of  $L$ is conjugate to its inverse. Thus  $x$ leaves invariant some (and hence any) conjugacy class of elements of order $3$. A contradiction, since by \cite[Exercise~2.18]{Wilson},  $x$ interchanges both classes.

It remains to show that if  $L=A_n$ and $x\in S_n$, then $\beta_{r}(x,L)\leq r-1$.
We use the induction by  $n$. By item (1) of the proposition we may assume that  $x$ is not a transposition.

Assume first that $n=5$. Then $r\in \{3,5\}$. If $x\in L$ is of odd order, then by Lemma~\ref{alpha_A_n}, $$\beta_{r}(x,L)\leq\alpha(x,L)=2=r-1.$$

So assume that  $x$ is an involution, i.e.  $x$ is a product of two independent transpositions. Since $$[(12)(45)][(13)(45)]=(123)\,\,\,\text{ and }\,\,\,[(13)(45)][(14)(23)]=(12345),$$ we conclude
$$\beta_{r}(x,L)=2\leq r-1.$$

Assume   $n=6$. Notice that  $L\cong L_2(9)$. Again $r\in \{3,5\}$. We may assume   $|x|\ne r$. In view of item (1)  we may also assume that $x$ is not a transposition and is not a product of three transpositions, since they are conjugate under outer automorphism of $A_6$.

If  $r=5$, then by Lemma \ref{alpha_A_n} we obtain $$\beta_{r}(x,L)\leq\alpha(x,L)=3\leq 4=r-1.$$
So assume   $r=3$. If  $|x|=5$, then Lemma  \ref{alpha_classic} implies $$\beta_{r}(x,L)\leq\alpha(x,L)=2=r-1.$$
Finally assume  $|x|=2$, i.e. $x$ is a product of two independent transpositions. As in the case  $n=5$, we derive~$\beta_{r}(x,L)=2= r-1.$

Let $n>6$. Since  $r\in \pi(L)$, the inequality  $r\leq n$ holds.

Assume  $x\in S_n$ has a fixed point. Then  $x$ is included in a point stabilizer isomorphic to  $S_{n-1}$. If $r\in \pi(S_{n-1})$, then $\beta_{r}(x,L)\leq r-1$
by induction. If $r\notin \pi(S_{n-1})$, then $r=n$ and by Lemma~\ref{alpha_A_n} we have $$\beta_{r}(x,L)\leq \alpha(x,L)\leq n/2=r/2<r-1.$$

Assume  $\langle x\rangle$ acts without fixed points, but not transitively. Then $x$ is contained in a subgroup of the form
$S_m\times S_{n-m}$, $2\leq m\leq n-2$ and the projections of $x$ on both components are nontrivial.  If the orders of the components are not divisible by  $r$, we obtain
$$
\beta_{r}(x,L)\leq \alpha(x,L)\leq n/2\leq\max\{m, n-m\}<r,
$$
whence the proposition follows. Assume  $r$ divides the order of at least one of the components in the product $S_m\times S_{n-m}$. If  $\max\{m,n-m\}\geq 5$, the induction implies the desired inequality. So we may assume  $\max\{m,n-m\}<5$ and $r=3$. Since $n>6$, we also obtain $\max\{m,n-m\}\geq n/2>3$ and so $\max\{m,n-m\}=4$. Thus $n\in\{7,8\}$ and   $S_m\times S_{n-m}$ is equal to either $S_3\times S_4$, or $S_4\times S_4$.
If $x\leadsto S_3$ then $\beta_{3}(x,L)\leq 2=r-1$. Since   $S_3$ is a homomorphic image of $S_4$, in order to finish the consideration of nontransitive action of $\langle x\rangle$ it remains to consider the following configuration: $n=8$ and $x$ is a product of four independent transpositions. In this case  $x\in S_2\times S_6$ and   $x\leadsto S_6$, so by induction
$\beta_{3}(x,L)\leq 2=r-1$.

Finally assume that  $\langle x\rangle$ acts transitively. Since  $x$ is of prime order, this implies $n=|x|$ is a prime, without loss of generality we may assume
$$
x=(12\dots n).
$$
Let $y=(123)$. Then
$$
x^{-1}(x^{y^{-1}})=x^{-1}yxy^{-1}=(134).
$$
Thus the subgroup $\langle x, x^{y^{-1}}\rangle$ contains a $3$-cycle and is primitive, since it is transitive and $n$ is prime. By the Jordan theorem   \cite[Theorem 3.3E]{DM}, we conclude $L\leq \langle x, x^{y^{-1}}\rangle$, whence $$\beta_{r}(x,L)\leq \alpha(x,L)=2\leq r-1,$$
and the proposition follows. \qed

\section{Proof of Theorem~\ref{t4}}
Theorem~\ref{t4} is true, if $\alpha(x,L)\leq 11$. So by Lemmas,
\ref{Sporadic}, \ref{Exceptional}, and \ref{Small_rank}, we derive that Theorem~\ref{t4} holds for the  sporadic groups, for the exceptional groups of Lie type, for $A_{n-1}(q)=L_n(q)$ and ${}^2A_n(q)=U_n(q)$ if $n\leq 11$, for $B_n(q)=O_{2n+1}(q)$  and $C_{n}(q)=S_{2n}(q)$ if $n\leq 5$, and for $D_n(q)=O_{2n}^+(q)$ and ${}^2D_n(q)=O_{2n}^-(q)$ if $n\leq 5$.

The theorem also holds in the case when  $L=A_n$ is an alternating group. Indeed, if  $|L|=n!$ is not divisible by  $r$, then $r>n$ and by Lemma  \ref{alpha_A_n}$$\alpha(x,L)\leq n-1<r-1,$$ i.e. item  (2) of Theorem~\ref{t4} holds. If  $r$ divides $|L|$ and  $n\ne 6$, then by Proposition~\ref{beta_A_n_prop}, we have
   $$
   \beta_r(x,L)\leq r-1\leq 2(r-2).
   $$
For  $n=6$ the inequality $\alpha(x,L)\leq 5<11$ holds and the theorem follows.

Thus by the classification of finite simple groups   \cite[Theorem~0.1.1]{AschLyoSmSol} it remains to prove that Theorem~\ref{t4} holds in the following cases:
\begin{itemize}
  \item  $L=A_{n-1}(q)=L_n(q)$ or $L={}^2A_{n-1}(q)=U_n(q)$  for $n\geq12$;
  \item  $L=B_{n}(q)=O_{2n+1}(q)$ or $L=C_{n}(q)=S_{2n}(q)$ for $n\geq6$;
  \item  $L=D_{n}(q)=O^+_{2n}(q)$ or $L={}^2D_{n}(q)=O^-_{2n}(q)$ for $n\geq6$.
 \end{itemize}
Assume that Theorem~\ref{t4} is not true and choose a nonabelian simple group $L$ of minimal order possessing an automorphism  $x$ of prime order such that the theorem is not true. Thus  $L$ is one of the classical groups mentioned above and, for some prime $r$, one of the following conditions holds
\begin{itemize}
  \item[(a)] $\alpha(x,L)\geq 12$,
  \item[(b)] if   $r$ does not divides  $|L|$, then $\alpha(x,L)>r-1$,
  \item[(c)] if $r$ divides  $|L|$, then $\beta_r(x,L)>2(r-2)$.
\end{itemize}

We prove a series of steps that lead to the final contradiction.

\medskip
\begin{itemize}
  \item[$(i)$] {\em  $r$ divides $|L|$ and $\beta_r(x,L)>2(r-2)$.}
\end{itemize}
\medskip

Assume  $r$ does not divides  $|L|$. Then by Lemma~\ref{r_not divi} one of the following statements holds:
 \begin{itemize}
  \item  $L=L_n(q)$ or $L=U_n(q)$ and $n\leq r-2$, whence by Lemma~\ref{alpha_classic} we obtain $\alpha(x,L)\leq n\leq r-2$, a contradiction with~(b);
  \item  $L=O_{2n+1}(q)$ or $L=S_{2n}(q)$ and  $2n\leq r-3$,  by Lemma~\ref{alpha_classic} we obtain $\alpha(x,L)\leq 2n+1\leq r-2$, a contradiction with~(b);
  \item  $L=O^+_{2n}(q)$ or $L=O^-_{2n}(q)$ and $2n\leq r-1$, again by Lemma \ref{alpha_classic} we obtain  $\alpha(x,L)\leq 2n\leq r-1$; a contradiction with~(b).
 \end{itemize}

Hence  $r$ divides  $|L|$ and   $\beta_r(x,L)>2(r-2)$ in view of  (c).  Thus we obtain  $(i)$.

\medskip

Now using the bounds on  $\alpha(x,L)$ from Lemma~\ref{alpha_classic} and the fact that  $L$ possesses an automorphism  $x$ such that $\alpha(x,L)\geq\beta_r(x,L)\geq 2r-3$ we conclude that

\medskip
\begin{itemize}
  \item[$(ii)$] {\em  The following statements hold.
\begin{itemize}
  \item[$(1)$] If $L=L_n(q)$ or  $L=U_n(q)$, then $n\geq 2r-3$;
  \item[$(2)$] If  $L$ is one of the group   $S_{2n}(q)$, $O_{2n+1}(q)$  or $O^\pm_{2n}(q)$, then $n\geq 2r-6$, possibly excepting the following cases\footnote{In view of the isomorphism $S_{2n}(q)\cong O_{2n+1}(q)$ for  $q$ even, we need not to consider the case $L=O_{2n+1}(q)$ and $q$ is even separately.}:
      \begin{itemize}
        \item[$(2a)$] $L=S_{2n}(q)$,  $q$ is odd, $x$ is an inner automorphism induced by a transvection; in this case  $n\geq r-1$;
        \item[$(2b)$] $L=S_{2n}(q)$,  $q$ is even, $x$ is an inner automorphism induced by a transvection; in this case $n\geq r-2$;
        \item[$(2c)$] $L=O_{2n+1}(q)$,  $q$ is odd, $x$ is an inner-diagonal automorphism, induced by a reflection, in this case   $n\geq r-2$;
        \item[$(2d)$] $L=O^\pm_{2n}(q)$,  $q$ is odd, $x$ is an inner-diagonal automorphism, induced by a reflection; in this case $n\geq r-2$;
        \item[$(2e)$] $L=O^\pm_{2n}(q)$,  $q$ is even, $x$ is an inner automorphism induced by a transvection; in this case   $n\geq r-2$.
      \end{itemize}
   \end{itemize}
  }
\end{itemize}
\medskip

\begin{itemize}
  \item[$(iii)$] {\em Cases $(2a)$--$(2e)$ in step  $(ii)$ are impossible. If $L$ is one of the groups  $S_{2n}(q)$, $O_{2n+1}(q)$  or $O^\pm_{2n}(q)$, then $\alpha(x,L)\leq n+3$.}
\end{itemize}
\medskip

Assume, one of cases  $(2a)$--$(2e)$ holds. The inequality   $n\geq r-2$ is satisfied in all these cases, and it implies for $r\geq 7$
$$2(n-1)\geq 2(r-3)> r-1>r-3.$$ By Lemma~\ref{r_not divi}  the orders of $S_{2(n-1)}(q)$, $O_{2n-1}(q)$, and $O^\pm_{2(n-1)}(q)$ are divisible by~$r$. Clearly, the orders of $S_{2(n-1)}(q)$, $O_{2n-1}(q)$, and $O^\pm_{2(n-1)}(q)$ are also divisible by~$r$ for $r=3,5$.

We claim that any transvection of $S_{2n}(q)$ and $O^\pm_{2n}(q)$ (in the last case  $q$ is even) is contained as a central element in a subgroup  $H$ such that $H/Z(H)\cong S_{2(n-1)}(q)$ and $H/Z(H)\cong O^\pm_{2(n-1)}(q)$ respectively. Indeed, all transvection in these groups are conjugate. Consider the stabilizer of the decomposition of  $V$ into an orthogonal sum of nondegenerate  subspaces $U$ and $W$ of dimensions   $2$ and $2(n-1)$ respectively, and let $H$ be a subgroup in this stabilizer consisting of all elements acting on $U$ as scalars. Clearly,  $H$ contains a transvection and has the desired structure.

Now notice that for every of such subgroup  $H$ the order  $|H/Z(H)|$ is divisible by  $r$. Since  $L$ is a minimal counter example, it follows  that in cases  $(2a)$, $(2b)$, and $(2e)$
$$
\beta_r(x,L)\leq 2(r-2).
$$

Since the reflections in orthogonal groups over fields of odd characteristic are conjugate, every reflection induces a reflection on a nondegenerate subspace of codimension $2$ and acts identically on its orthogonal complement. Therefore, every reflection in   $O_{2n+1}(q)$  or $O^\pm_{2n}(q)$ with  $q$ odd, normalizes but not centralizes a subgroup  $H$ such that   $H/Z(H)\cong O_{2n-1}(q)$ or $H/Z(H)\cong O^\pm_{2(n-1)}(q)$. Again the order  $|H/Z(H)|$ is divisible by  $r$, and the minimality of  $L$ implies that in cases $(2c)$ and $(2d)$
$$
\beta_r(x,L)\leq 2(r-2).
$$

Now if  $L$ is one of $S_{2n}(q)$, $O_{2n+1}(q)$, or $O^\pm_{2n}(q)$, then $\alpha(x,L)\leq n+3$ by Lemma~\ref{alpha_classic}.
\medskip

\begin{itemize}
  \item[$(iv)$] {\em  $x$ is not   unipotent.
  }
\end{itemize}
\medskip

Otherwise by Lemma~\ref{Parabolic} we may assume that  there exists a maximal parabolic subgroup  $P$ such that   $x$ is contained in  $P\setminus U$, where  $U$ is the unipotent radical of  $P$. Therefore  $x$ has a nontrivial image in~$P/\Oo_\infty(P)$ and one of the following cases holds:
\begin{itemize}
  \item[$(iv1)$] $L=L_n(q)$; $P$ is corresponding to a set  $J$ of fundamental roots of the Dynkin diagram pic.~\ref{p90}, where either $J=\{r_2,\dots,r_{n-1}\}$ or $J=\{r_1,\dots,r_{n-2}\}$;
      $P/\Oo_\infty(P)$ is isomorphic to a subgroup of $\Aut(L_{n-1}(q))$, containing~$L_{n-1}(q)$.
  \item[$(iv2)$] $L=U_n(q)$; $P$ is corresponding to a set $J^1$ of fundamental roots of the Dynkin diagram pic.~\ref{p901} and~\ref{p902}, where either $J^1=\{r_2^1,\dots, r_{[n/2]}^1\}$ and $P/\Oo_\infty(P)$ is isomorphic to a subgroup of $\Aut(U_{n-2}(q))$ containing  $U_{n-2}(q)$, or $J^1=\{r_1^1,\dots, r_{[n/2]-1}^1\}$ and $P/\Oo_\infty(P)$ is isomorphic to a subgroup of $\Aut(L_{[n/2]}(q^2))$ containing $L_{[n/2]}(q^2)$.
  \item[$(iv3)$] $L=S_{2n}(q)$ or $L=O_{2n+1}(q)$; $P$ is corresponding to a set $J$ of fundamental roots of the Dynkin diagram pic.~\ref{p903}, where either $J=\{r_2,\dots, r_{n}\}$ and  $P/\Oo_\infty(P)$ is isomorphic to a subgroup of  $\Aut(S_{2(n-1)}(q))$, containing $S_{2(n-1)}(q)$ or to a subgroup of  $\Aut(O_{2n-1}(q))$ containing $O_{2n-1}(q)$, or  $J=\{r_1,\dots, r_{n-1}\}$ and  $P/\Oo_\infty(P)$ is isomorphic to a subgroup of   $\Aut(L_{n}(q))$ containing~$L_{n}(q)$.
  \item[$(iv4)$] $L=O^+_{2n}(q)$;
  $P$ is corresponding to a set $J$ of fundamental roots in the Dynkin diagram pic.~\ref{p91}, where either  $J=\{r_2,\dots, r_{n}\}$ and  $P/\Oo_\infty(P)$ is isomorphic to a subgroup of $\Aut(O^+_{2(n-1)}(q))$ containing  $O^+_{2(n-1)}(q)$, or  $J=\{r_1,\dots, r_{n-1}\}$ and  $P/\Oo_\infty(P)$ is isomorphic to a subgroup of $\Aut(L_{n}(q))$ containing  $L_{n}(q)$.
  \item[$(iv5)$] $L=O^-_{2n}(q)$;
  $P$ is corresponding to a set  $J^1$ of fundamental roots in the Dynkin diagram pic.~\ref{p92}, where either   $J^1=\{r^1_2,\dots, r^1_{n-1}\}$ and  $P/\Oo_\infty(P)$ is isomorphic to a subgroup of  $\Aut(O^-_{2(n-1)}(q))$  containing $O^-_{2(n-1)}(q)$, or  $J^1=\{r^1_1,\dots, r^1_{n-2}\}$ and  $P/\Oo_\infty(P)$ is isomorphic to a subgroup of  $\Aut(L_{n-1}(q))$ containing $L_{n-1}(q)$.
  \end{itemize}

We consider all possibilities case by case and show that  $\beta_r(x,L)\leq \max\{11, 2(r-2)\}$, thus obtaining a contradiction.

In view of step  $(ii)$, for both $(iv1)$ and $(iv2)$ the inequality  $n\geq 2r-3$ holds. Moreover $n\geq12$. Whence  $n-1,n-2$, and $2[n/2]$ are greater than $r-2$, and by Lemma~\ref{r_not divi} the orders of  $L_{n-1}(q)$, $U_{n-2}(q)$, and  $L_{[n/2]}(q^2)$  are divisible by  $r$. By the minimality of  $L$ we derive
$$\beta_r(x,L)\leq \max\{11, 2(r-2)\}.$$

Also by step $(ii)$ in case  $(iv3)$ the inequality  $n\geq 2r-6$ holds. Moreover  $n\geq6$. Whence  $n> r-2$ and $2(n-1)>r-3$, and by Lemma~\ref{r_not divi} the orders of  $S_{2(n-1)}(q)$, $O_{2n-1}(q)$, and  $L_{n}(q)$ are divisible by  $r$. Again by the minimality of  $L$ we obtain
$$\beta_r(x,L)\leq \max\{11, 2(r-2)\}.$$

Finally, by step $(ii)$ in cases  $(iv4)$ and $(iv5)$ the inequality  $n\geq2r-6$ holds, and also  $n\geq6$. Whence  $n>n-1> r-2$ and  $2(n-1)>r-1$, and by Lemma~\ref{r_not divi} the orders of  $O^\pm_{2(n-1)}(q)$, $L_{n}(q)$, and $L_{n-1}(q)$ are divisible by $r$. Like above, by the minimality of $L$ we conclude that $$\beta_r(x,L)\leq \max\{11, 2(r-2)\}.$$

\begin{itemize}
  \item[$(v)$] {\em The automorphism  $x$ is not induced by an irreducible semisimple inner-diagonal element. }
\end{itemize}
\medskip

This step follows immediately by Lemma~\ref{alpha_irreducible}.

\begin{itemize}
  \item[$(vi)$] {\em The automorphism  $x$ is not induced by a semisimple inner-diagonal element contained in a proper parabolic subgroup of~$\L$.  }
\end{itemize}

\medskip

Assume by contradiction that   $x$ is induced by a semisimple inner-diagonal element lying in a proper parabolic subgroup. By Lemma ~\ref{Parabolic},  $x$ is conjugate with an element from the Levi complement   $J$ of a maximal parabolic subgroup  $P$, and normalizes but not centralizes every component of this complement. Then  $x$ induces a nontrivial automorphism on each component, and, if the order of a component is divisible by  $r$, the minimality of  $L$ implies that
$$\beta_r(x,L)\leq \max\{11, 2(r-2)\}.$$
If $r$ does not divide the order of all components, then $r$ does not divide the order of $P$. By Lemma~\ref{non_parabolic_divisor} one of the following possibilities occur:
\begin{itemize}
 \item
  $L=L_n(q)$  or  $L=U_n(q)$ and $r\geq \displaystyle\frac{n+4}{2}$; a contradiction with the inequality   $n\geq 2r-3$ obtained in step~$(ii)$.
  \item
  $L=S_{2n}(q)$ or  $L=O_{2n+1}(q)$ and $r\geq  \displaystyle\frac{2n+7}{3}$; a contradiction with $n\geq 2r-6$ obtained in step~$(ii)$, since $n\geq 6$.
 \item
  $L=O^\pm_{2n}(q)$ and $r\geq  \displaystyle\frac{2n+5}{3}$. If $(n,r)\ne(8,7)$, then we obtain a contradiction with $n\geq 2r-6$ obtained in step~$(ii)$, since  $n\geq 6$. If $(n,r)=(8,7)$, then by Lemma~\ref{alpha_classic} we have
     $$
     \beta_r(x,L)\leq \alpha(x,L)\leq n+3=11\leq\max\{11, 2(r-2)\}.
     $$

 \end{itemize}

\begin{itemize}
  \item[$(vii)$] {\em If $L$ is one of the group  $U_n(q)$, $S_{2n}(q)$, $O_{2n+1}(q)$, or  $O_{2n}^\pm(q)$, then the automorphism   $x$ is not induced by a similarity of the underlying space~$V$ of~$L$, possessing a proper nondegenerate invariant subspace.  }
\end{itemize}

Assume  $U$ is a nontrivial nondegenerate $x$-invariant subspace of minimal possible dimension. Set $W=U^\perp$. Then  $W$ is also $x$-invariant, and we have $V=U\oplus W$ and $\dim W\geq \displaystyle\frac{1}{2}\dim V$ since $U$ is nondegenerate. If $\dim U=1$, we may assume that a preimage   $x^*$ of  $x$ in the group of all similarities of $V$ is nonscalar on $W$
(see Lemma~\ref{SemisimpleInvariant}). If $\dim U>1$ , then $x$ is nonscalar on $W$ by the choice of $U$ (otherwise $U$ would not be of minimal possible dimension). Let  $H$ be the stabilizer of both $U$ and $W$ in  $\L$. Then   $x\in H$. Consider the projective image $P\Delta(W)$ of the group of all similarities $\Delta(W)$ of~$W$. Clearly  $P\Delta(W)$ is a homomorphic image of $H$ and the image  $\overline{x}$ of $x$ in $P\Delta(W)$ is nontrivial. Let  $S$ be the socle of  $P\Delta(W)$. If $r$ divides $|S|$, then by the choice of  $L$ we have
$$
\beta_r(x,L)\leq \beta_{r}(\overline{x},S)\leq \max\{11, 2(r-2)\}.
$$
Similarly, if   $\dim U>1$, and  the order of the socle  $T$ of   $P\Delta(U)$ (here $P\Delta(U)$ is the projective image of the group of similarities $\Delta(U)$ of $U$), is divisible by   $r$ we derive
$$
\beta_r(x,L)\leq \max\{11, 2(r-2)\}.
$$
So we may assume that  $r$ does not divide the order of the stabilizer in  $\L$ of some decomposition  $V=U\oplus W$ into the sum of mutually orthogonal nondegenerate subspaces $U$ and~$W$.

By Lemma~\ref{non_non-degenerated_divisor} one of the following cases holds:
\begin{itemize}

 \item  $L=U_n(q)$ and $r\geq
 \displaystyle\frac{n+4}{2}$; a contradiction with the inequality  $n\geq 2r-3$ obtained in Step~$(ii)$.
 \item  $L=S_{2n}(q)$ and $r\geq    n+3$;  a contradiction with the inequality $n\geq 2r-6$ obtained in Step~$(ii)$ and condition $n\geq 5$.
 \item  $L=O_{2n+1}(q)$ and $r\geq n+1$;  a contradiction with the inequality $n\geq 2r-6$ obtained in Step~$(ii)$ and condition  $n\geq 6$.
 \item  $L=O^+_{2n}(q)$ or  $L=O^-_{2n}(q)$  and $r\geq  n+1$;  a contradiction with the inequality $n\geq 2r-6$ obtained in Step~$(ii)$  and condition $n\geq 6$.
 \end{itemize}

 \begin{itemize}
  \item[$(viii)$] {\em   $x$ is not inner-diagonal.  }
\end{itemize}

Follows by $(iv)$--$(vii)$

 \begin{itemize}
  \item[$(viii)$] {\em    $x$ is not a field automorphism modulo  $\L$.
  }
\end{itemize}
Otherwise by Lemma~\ref{Field_Aut} we may assume that   $x$ is a canonical field automorphism. Then   $x$ induces a nontrivial field automorphism on the stabilizer  $H$ of a $2$-dimensional nondegenerate ($1$-dimensional in case $L=L_n(q)$) subspace (such that the restriction of the corresponding quadratic form on the subspace have the sign  $\varepsilon$,  if $L=O_{2n}^\varepsilon(q)$), and on the socle $S$ of  $H/\Oo_\infty(H)$. At that
\begin{itemize}
  \item[$(viii1)$] $S\cong L_{n-1}(q)$, if $L=L_n(q)$;
  \item[$(viii2)$] $S\cong U_{n-2}(q)$, if $L=U_n(q)$;
  \item[$(viii3)$] $S\cong S_{2(n-1)}(q)$, if $L=S_{2n}(q)$;
  \item[$(viii4)$] $S\cong O_{2n-1}(q)$, if  $L=O_{2n+1}(q)$;
  \item[$(viii5)$] $S\cong O^+_{2(n-1)}(q)$, if $L=O^\varepsilon_{2n}(q)$.
  \end{itemize}
As above, if  $|S|$ is divisible by $r$, the induction implies  $$
\beta_r(x,L)\leq \max\{11, 2(r-2)\}.
$$
Otherwise  by Lemma~\ref{r_not divi} we obtain on of the following inequalities
\begin{itemize}
  \item[$(viii1)$] $r-2\geq n-1$, a contradiction with the inequality   $n\geq 2r-3$ obtained in Step~$(ii)$;
  \item[$(viii2)$] $r-2\geq n-2$,  a contradiction with the inequality  $n\geq 2r-3$, obtained in Step~$(ii)$ or with  $n\geq 12$;
  \item[$(viii3,4)$]  $r-3\geq 2(n-1)$, a contradiction with the inequality   $n\geq 2r-6$, obtained in Step~$(ii)$ or with  $n\geq 5$;
  \item[$(viii5)$] $r-1\geq 2(n-1)$, a contradiction with the inequality    $n\geq 2r-6$,  obtained in Step~$(ii)$, or with $n\geq 6$.
  \end{itemize}

Thus by Lemma~\ref{Aut} it follows that

\begin{itemize}
  \item[$(ix)$] {\em Modulo~$\L$, $x$ either is graph-field and $L\in\{L_n(q),O_{2n}^+(q)\}$, or is graph and \\ $L\in\{L_n(q),U_n(q),O_{2n}^+(q),O_{2n}^-(q)\}$. Moreover $|x|=2$.
  }
\end{itemize}

We exclude both remaining possibilities for $x$.
\begin{itemize}
  \item[$(x)$] {\em  $x$ is not a graph-field automorphism modulo~$\L$.
  }
\end{itemize}
Suppose the contrary. Then by Lemma~\ref{Field_Aut} we have  $q=q_0^2$ and $C_L(x)\cong U_{n}(q_0)$, if $L=L_n(q)$; and $C_L(x)\cong O_{2n}^-(q_0)$, if $L=O^+_{2n}(q)$. It is easy to see that the index  $|L:C_L(x)|$ is even. Moreover,  $Z(C_L(x))=1$. By Lemma~\ref{guest}, $x$ normalizes but not centralizes a subgroup $H=C_L(x)^g$. Therefore $x$ induces on $H$ a nontrivial automorphism  $\overline{x}$.
\begin{itemize}
  \item If $L=L_n(q)$, then $n\geq 2r-3>r-2$, $H\cong  U_{n}(q_0)$, and by Lemma~\ref{r_not divi},  $r$ divides~$|H|$.

  \item  If $L=O_{2n}^+(q)$, then $H\cong  O^-_{2n}(q_0)$, $n\geq 2r-6$, whence, using the inequality  $n\geq 6$, we conclude $2n>r-1$, and by Lemma~\ref{r_not divi},  $r$ divides~$|H|$.
 \end{itemize}
Therefore, by induction we obtain
      $$
      \beta_r(x,L)\leq \beta_{r}(\overline{x},H)\leq \max\{11, 2(r-2)\};
      $$
      a contradiction.

\begin{itemize}
  \item[$(xi)$] {\em  $x$ is not a graph automorphism modulo~$\L$. }
\end{itemize}

Suppose the contrary. Then one of the following possibilities occurs: $L=L^\varepsilon_n(q)$ or  $L=O^\varepsilon_{2n}(q)$, $\varepsilon\in\{+,-\}$. Consider these possibilities separately.

\begin{itemize}
  \item  $L=L^\varepsilon_n(q)$, $\varepsilon\in\{+,-\}$.
\end{itemize}

By Lemma~\ref{GraphAutGLU},  $x$ normalizes but not centralizes a subgroup  $H$ of $L$, isomorphic to $O_n(q)=O_{2k+1}(q)$, if $n=2k+1$, and a subgroup isomorphic to either $S_n(q)=S_{2k}(q)$, or $O^\pm_n(q)=O^\pm_{2k}(q)$, if $n=2k$. Then  $x$ induces on  $H$ an automorphism $\overline{x}$. Since  $n\geq 2r-3$ and $n\geq 12$, in all cases we have
       $$2k\geq n-1\geq 2r-4>r-1>r-3.
       $$
Whence  $|H|$ is divisible by $r$ by Lemma~\ref{r_not divi}, and the induction implies
      $$
      \beta_r(x,L)\leq \beta_{H}(\overline{x},H)\leq \max\{11, 2(r-2)\}.
      $$

\begin{itemize}
  \item  $L=O^\varepsilon_{2n}(q)$, $\varepsilon\in\{+,-\}$.
\end{itemize}

As above, first we choose a  subgroup  $H$ of $L$  such that $x$ normalizes but not centralizes it.

If  $q$ is even, then $L=\L$ and by Lemma \ref{Graph_Inv_D_n_even_q} a graph automorphism $\gamma$ of $\L$ normalizes but not centralizes a subgroup isomorphic to $ O_{2n-2}^{\varepsilon\eta}(q)$ for appropriate $\eta\in\{+,-\}$.

Assume that  $q$ is odd. In this case by Lemma~\ref{Graph_Inv_D_n},  $x$ normalizes but not centralizes a subgroup  $H$ of $L$, such that either  $H\cong O_{2n-1}(q)$, or $n=2k+1$ and $H\cong O_{2k+1}(q^2)$.

Since  $n\geq 2r-6$ and $n\geq 6$, we obtain that  $$2(n-1)\geq n>r-3.$$ Whence  $|H|$ is divisible by $r$ by Lemma~\ref{r_not divi}, and by induction we have $$
      \beta_r(x,L)\leq \beta_{r}(\overline{x},H)\leq \max\{11, 2(r-2)\}.
      $$

\section{Proofs of Theorems~\ref{t2} and~\ref{t3}}

Let  $\pi$ be a proper subset of the set of all primes, $r$ be the minimal prime not in $\pi$ and $$m:=\max\{11, 2(r-2)\}.$$  In view of Proposition~\ref{ex1}, in order to prove Theorems~\ref{t2} and~\ref{t3} it is enough to prove that  $\BS_\pi^m$ coincides with the class of all finite groups.

Assume the contrary, and let  $G\notin \BS_\pi^m$ be of minimal order. By Lemma~\ref{2notinpi} we obtain that  $r>2$.

By Lemma~\ref{red} we conclude that $G$ is isomorphic to an almost simple group with the simple socle  $L$, satisfying to the following properties:  $L$ is neither a $\pi$- nor a $\pi'$-group, it admits an automorphism  $x$ of prime order lying in  $\pi$ such that every $m$ conjugates of $x$ generate a  $\pi$-group, and $G\cong \langle L,x\rangle$.

Since   $L$ is not a $\pi$-group, there exists a prime divisor  $s$ of the order of~$L$, not lying in~$\pi$. Clearly $s\geq r$.

Now  $\alpha(x,L)> m$, since otherwise $m$ conjugates of $x$ generate a subgroup of $\langle L,x\rangle$ whose order is divisible by  $s$ in contrast with the assumption that every $m$ conjugates of $x$ generate a $\pi$-group. In particular,  $$\alpha(x,L)>11\text{ and } \alpha(x,L)>2(r-2)\geq r-1.$$ The same arguments imply   $$\text{ if }r\text{ divides the order of }L,\text{ then }\beta_r(x,L)>m\geq 2(r-2).$$ A contradiction with Theorem~\ref{t4}.

\bibliographystyle{amsplain}

\bigskip


\end{document}